\numberwithin{equation}{section}
\newtheorem{theorem}{Theorem}[section]
\newtheorem{lemma}[theorem]{Lemma}
\newtheorem{problem}[theorem]{Problem}
\newtheorem{example}[theorem]{Example}
\newtheorem{claim}{Claim}[theorem]
\theoremstyle{definition}
\newtheorem{definition}[theorem]{Definition}
 \theoremstyle{remark}
\newcommand{\mc}[1]{\mathcal{#1}}
\newcommand{\mbb}[1]{\mathbb{#1}}
\newcommand{\mb}[1]{\mathbf{#1}}
\newcommand{\mf}[1]{\mathfrak{#1}}
\newcommand{\setm}{\setminus}
\newcommand{\empt}{\emptyset}
\newcommand{\subs}{\subset}
\newcommand{\dom}{\operatorname{dom}}
\newcommand{\ran}{\operatorname{ran}}
\def\<{\left\langle}
\def\>{\right\rangle}
\def\br#1;#2;{\bigl[ {#1} \bigr]^ {#2} }
\newcommand{\oo}{{\omega}_1}
\newcommand{\oot}{{\omega}_2}
\DeclareMathOperator{\cf}{cf}
\newcommand{\lordop}{\operatorname{ord}_{\ell}}
\newcommand{\lord}[1]{\lordop(#1)}
\newcommand{\rordop}{\operatorname{ord}_{r}}
\newcommand{\rord}[1]{\rordop(#1)}
\newcommand{\httop}{\operatorname{ht}}
\newcommand{\htt}[1]{\httop(#1)}
\newcommand{\baire}[1]{\operatorname{B}(#1)}
\newcommand{\rank}{\operatorname{rk}}
\newcommand{\Pko}{P^{\kappa}}
\newcommand{\Pkoo}{P^{\kappa}_0}
\newcommand{\pcalk}{{\mc P}^{\kappa}}
\newcommand{\qsd}{Q}
\newcommand{\anfg}{\<A,n,f,g\>}
\def\anfgi#1;{\<A^{#1},n^{#1},f^{#1},g^{#1}\>}
\newcommand{\bde}{\<B,d,e\>}
\def\bdei#1;{\<B^{#1},d^{#1},e^{#1}\>}
\def\ap{A^p}
\def\fp{f^p}
\def\np{n^p}
\def\up{U^p}
\def\sbar{\overline{\sigma}}
\def\sstr{{\sigma}^*}
\newcommand{\id}{\operatorname{id}}
\newcommand{\mydotminus}{\mathbin{\mathpalette\prc@inner\relax}}
\newcommand{\prc@inner}[2]{%
  \vbox{\offinterlineskip\m@th
    \ialign{%
      ##\cr
      \hidewidth\raisebox{-1.5\height}[0pt][0pt]{$#1.$}\hidewidth\cr
      $#1-$\cr
    }%
  }%
}
\author[L. Soukup]{Lajos Soukup}
\thanks
  {The first author was 
supported by   National Research, Development and Innovation Office – NKFIH, K113047.
   } 
\address{Alfréd Rényi Institute of Mathematics, Hungarian Academy of Sciences}
\email{soukup@renyi.hu
}
   \author[A. Stanley] {Adrienne Stanley}
\address{Department of Mathematics, University of Northern Iowa}
\email{adrienne.stanley@uni.edu}
\thanks{The second author was 
supported by  Fulbright Scholar Program.}
\subjclass[2010]{54F05, 54A35}
\keywords{}
\title[Left-Separating Order Types]
   {Left-Separating Order Types}
\date{\today}
\begin{document}

\begin{abstract}
A well ordering $\prec$ of a topological space $X$ is {\em left-separating}
if $\{x'\in X: x'\prec x\}$ is 
 closed in $X$ for any $x\in X$.
A space is {\em  left-separated} if it has a left-separating well-ordering.
The {\em left-separating type} $\lord{X}$ of a left-separated space $X$
is the minimum of the order types of the left-separating well-orderings of $X$.

We prove that 
 \begin{enumerate}[(1)]
 
 \item 
if  ${\kappa}$ is a regular cardinal, then 
for each ordinal ${\alpha}<{\kappa}^+$
  there is a $T_2$ space $X$ with  
 $\lord X={\kappa}\cdot {\alpha}$;
\item if ${\kappa}={\lambda}^+$  and $\cf({\lambda})={\lambda}>{\omega}$,
then 
for each ordinal ${\alpha}<{\kappa}^+$
  there is a 0-dimensional space $X$ with  
 $\lord X={\kappa}\cdot {\alpha}$;
\item
 if ${\kappa}=2^{\omega}$ or ${\kappa}=\beth_{{\beta}+1}$, where $\cf({\beta})={\omega}$,
 then 
for each ordinal ${\alpha}<{\kappa}^+$
  there is a locally compact, locally countable, 0-dimensional space $X$ with  
 $\lord X={\kappa}\cdot {\alpha}$.
\end{enumerate}

The union of two left-separated spaces is not necessarily left-separated. 
We show, however,  that if $X$ is a countably tight  space, 
$X=Y\cup Z$, $\lord{Y}<\oo\cdot \omega$
and $\lord Z<\oo\cdot \omega$, then $X$ is also left-separated and 
\begin{align*}
 \lord{X}\le \lord Y+ \lord Z.
\end{align*}

We prove that 
it is consistent that 
there is a first countable, 0-dimensional space $X$, which is not left-separated, but 
there is a c.c.c poset $Q$ such that 
\begin{align*}
 V^Q\models \lord{X}=\oo\cdot \omega.
\end{align*}
However, if 
 $X$ is a topological space and $Q$ is a c.c.c poset such that 
\begin{align*}
 V^Q\models \lord{X}<\oo\cdot \omega,
\end{align*}
then  $X$ is left-separated  even in $V$.
\end{abstract}

\maketitle

\section{Introduction}

The notion of left- and right-separated spaces was introduced by Hajnal and Juhász \cite{HJ0}.

A well ordering $\prec$ of a topological space $X$ is 
{\em left-separating (right-separating) }
if $\{x\in X: x\prec y\}$ is 
 closed (respectively, open)   in $X$ for any $y\in X$.
A space is {\em  left-separated  (right-separated)} if it has a left-separating
(respectively, right-separating)
well-ordering.
The {\em left separating type} $\lord{X}$ of a left-separated space $X$
is the minimum of the order types of the left-separating well orders of $X$.
The {\em right separating type} $\rord{X}$ of a right-separated space $X$
is defined analogously: it is the minimum of the order types of the right-separating well orders 
of $X$. We write $\lord X=\infty$   ($\rord X=\infty$)
if $X$ is not left-separated  (respectively, not right-separated).

A space $X$  is {\em scattered} if
every non-empty subspace of $X$ has an isolated point.
The Cantor-Bendixson height of a scattered space $X$
will be denoted by $\htt{X}$. 

A space is scattered if and only if it is right-separated.
Since $\htt{X}\le \rord{X}$ holds for scattered 
spaces, and  $\htt{{\omega}^{\alpha}}={\alpha}$ for any ordinal ${\alpha}>0$, 
where ${\omega}^{\alpha}$ denotes ordinal exponentiation,  
 for each infinite ordinal ${\alpha}$ we have $|{\omega}^{\alpha}|=|{\alpha}|$
and $\rord{{\omega}^{\alpha}}\ge {\alpha}$.
So the right-separating types of spaces of cardinality ${\kappa}$ are unbounded
in ${\kappa}^+$.

In section \ref{sc:lord}, 
we discuss the same problem for left-separated spaces:
\begin{enumerate}[\quad]
 \item[] {\em Let ${\kappa}$ be an infinite cardinal and ${\alpha}<{\kappa}^+$.
Is there a left-separated regular (or Hausdorff) space $X$ of size $\kappa$ 
with 
$\lord X>{\alpha}$  (or with $\lord X={\alpha}$)?} 
 \end{enumerate}
The answer is clearly {\bf no} for ${\kappa}={\omega}$: every countable $T_1$
space is left-separated in type ${\omega}$.
In theorems \ref{tm:lord1} and  \ref{tm:lord2} 
we give a partial answer for cases where  ${\kappa}>{\omega}$. 

\begin{theorem}\label{tm:lord1}
If   ${\kappa}$ is  a regular, uncountable cardinal, and  
${\alpha}<{\kappa}^+$ is  an ordinal, then 
 there is a first countable,  scattered $T_2$  space 
$Y$ such that $\lord Y={\kappa}\cdot{\alpha}$.
\end{theorem}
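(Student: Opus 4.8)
The plan is to construct $Y$ by recursion on $\alpha$, building for each $\alpha$ with $1\le\alpha<\kappa^+$ a first countable, scattered, $T_2$ space $Y_\alpha$ of cardinality $\kappa$ with $\lord{Y_\alpha}=\kappa\cdot\alpha$. The recursion must carry two extra invariants: a \emph{self-similarity} invariant (a fixed partition of $Y_\alpha$ into $\omega$ clopen copies of $Y_\alpha$ itself, so that a strictly decreasing $\omega$-chain of clopen subspaces, each homeomorphic to $Y_\alpha$ and with empty intersection, is available inside $Y_\alpha$), used to feed the successor step; and an \emph{indecomposability} invariant — every clopen $F\subseteq Y_\alpha$ with $|Y_\alpha\setminus F|<\kappa$ still has $\lord F=\kappa\cdot\alpha$ — used to feed the lower bound. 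For $\alpha=1$ take a discrete space of size $\kappa$: every well-ordering is left-separating, so $\lord{Y_1}=\kappa$, and the invariants are immediate.

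For a limit ordinal $\alpha$, of \emph{any} cofinality (we never glue on a limit point here), fix ordinals $\gamma_\iota\nearrow\alpha$ for $\iota<\cf(\alpha)$ and set $Y_\alpha:=\bigsqcup_{\iota<\cf(\alpha)}Y_{\gamma_\iota}$; this is first countable, scattered, $T_2$, and of cardinality $\kappa$ (a union of $\cf(\alpha)\le\kappa$ sets of size $\kappa$). Interleaving the $\kappa$-blocks of the optimal orders of the summands — list, in order type $\sup_\iota\gamma_\iota=\alpha$, the "$\xi$-th block" obtained by pooling the $\xi$-th $\kappa$-blocks of those $Y_{\gamma_\iota}$ with more than $\xi$ blocks — yields a left-separating order of $Y_\alpha$ of type $\kappa\cdot\alpha$, while restricting any left-separating order to the clopen summand $Y_{\gamma_\iota}$ forces type $\ge\kappa\cdot\gamma_\iota$, hence type $\ge\sup_\iota\kappa\cdot\gamma_\iota=\kappa\cdot\alpha$; the same restriction argument together with the induction hypothesis gives indecomposability, and self-similarity is arranged by regrouping the summands into $\omega$ blocks.

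The successor step $\alpha=\delta+1$ is where the work, and the interaction with first countability, is concentrated. For finite $\delta$ one may simply put a closed discrete set $\{p_i:i<\kappa\}$ of new points on top of $\kappa$ clopen copies of $Y_\delta$, with $p_i$ the limit of an $\omega$-sequence in the $i$-th copy; enumerating the $p_i$'s first and then the copies gives type $\kappa+\kappa\cdot\delta=\kappa\cdot(\delta+1)$, and a "last-$p_i$" counting argument (each $p_i$ precedes a tail of its sequence, so most of the $i$-th copy lies above it, while the other $p_i$'s lie below the last one) gives the matching lower bound. For infinite $\delta$, however, $\kappa+\kappa\cdot\delta=\kappa\cdot\delta$, so that naive construction merely gives $\lord=\kappa\cdot\delta$: one must genuinely raise $\lord$ by $\kappa$, which means the $\kappa$ new points cannot be allowed to be enumerated cheaply in front. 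The idea is to make each $p_i$ a limit of a strictly decreasing $\omega$-chain $W_i^0\supsetneq W_i^1\supsetneq\cdots$ of clopen subspaces each homeomorphic to $Y_\delta$ (available by self-similarity), taken as $p_i$'s basic neighbourhoods, so that in any left-separating order a whole clopen copy of $Y_\delta$, hence (by the indecomposability of $Y_\delta$) $\kappa\cdot\delta$ points, must lie above $p_i$; the delicate and, I expect, hardest part is to entangle the chains $\langle W_i^m:m<\omega\rangle$ for different $i$ — making $Y_{\delta+1}$ indecomposable over its top level, so the $p_i$'s cannot all be slipped in before a single clopen copy of $Y_\delta$ — in such a way that the value stays \emph{exactly} $\kappa\cdot\delta+\kappa$ and that first countability, $T_2$, coherence of the declared neighbourhoods into a genuine topology, and the upkeep of both invariants all survive.

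Finally, the lower bound $\lord{Y_\alpha}\ge\kappa\cdot\alpha$ in every case rests on the elementary lemma: if a space $X$ has a left-separating well-ordering of type $\kappa\cdot\gamma+\eta$ with $\eta<\kappa$, then the initial segment $F$ of length $\kappa\cdot\gamma$ is \emph{clopen} — closed because the ordering is left-separating, and with open complement because each of the remaining fewer-than-$\kappa$ points needs a neighbourhood missing all its predecessors, in particular missing $F$ — while $\lord F\le\kappa\cdot\gamma$ and $|X\setminus F|<\kappa$. Consequently $\lord{Y_\alpha}<\kappa\cdot\alpha$ would exhibit a clopen splitting of $Y_\alpha$ into a piece of left-separating type strictly below $\kappa\cdot\alpha$ and a piece of size below $\kappa$, contradicting the indecomposability invariant, which is verified for $Y_1$ directly and propagated through the limit and successor steps as indicated.
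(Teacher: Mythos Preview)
Your recursion is a different strategy from the paper's, but as written it is not a proof: the successor step for infinite $\delta$ --- which you yourself call ``the delicate and, I expect, hardest part'' --- is never carried out. You correctly diagnose that the naive construction collapses to $\lord{Y_{\delta\dotplus1}}=\kappa\cdot\delta$ because a closed discrete top level can always be listed first, and you propose to ``entangle'' the neighbourhood chains $\langle W_i^m\rangle$ to prevent this, but you give no construction. Note that as long as $\{p_i:i<\kappa\}$ is closed discrete and its complement is open (which any straightforward reading of your sketch yields: the old points keep their old neighbourhoods, and these miss every $p_i$), the order ``all $p_i$'s, then the base'' is left-separating of type $\kappa\dotplus\kappa\cdot\delta=\kappa\cdot\delta$; so the entanglement cannot be a local modification near each $p_i$ but must be genuinely global, and you have not said what it is. There is also a slip in your final lemma: you assert that the initial segment $F$ of length $\kappa\cdot\gamma$ is \emph{clopen}, but your two justifications (``closed because left-separating'' and ``complement open because later points have neighbourhoods missing $F$'') are the same fact --- both say $F$ is closed --- so your indecomposability invariant, phrased for clopen $F$, does not apply. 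That is repairable by stating the invariant for closed $F$, but you then have to propagate it through the successor step, which returns you to the main gap.

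For contrast, the paper avoids recursion entirely. It partitions a subset of $E^\omega_\kappa=\{\nu<\kappa:\cf(\nu)=\omega\}$ into $\alpha\dotplus1$ pairwise disjoint stationary pieces $X_0,\dots,X_\alpha$ and refines the order topology so that $\gamma\in X_\xi$ has basic neighbourhoods $\{\gamma\}\cup\bigl((\beta,\gamma)\cap\bigcup_{\zeta>\xi}X_\zeta\bigr)$; this is first countable (each $\gamma$ has countable cofinality) and scattered $T_2$. Theorem~\ref{tm:ordl-bounds} then gives $\kappa\cdot\alpha\le\lord{X}\le\kappa\cdot(\alpha\dotplus1)$ directly --- the lower bound coming from the fact that any $\kappa$-sized subset of $X_\xi$ has stationary closure in each earlier $X_\zeta$, which is exactly the global entanglement you were reaching for, supplied here for free by the club filter --- and Lemma~\ref{lm:appr-exact} trims to a closed subspace $Y$ with $\lord{Y}=\kappa\cdot\alpha$.
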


To find regular spaces with large left-separating types
we need extra assumptions.

\begin{theorem}\label{tm:lord2}
If ${\kappa}>{\omega}_1$ is a regular cardinal, and there is 
a non-reflection stationary set $S\subs {\kappa}$, 
then for each ${\alpha}<{\kappa}^+$ there is a 0-dimensional space $Y$
with $\lord{Y}={\kappa}\cdot {\alpha}$.
\end{theorem}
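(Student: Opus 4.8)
The plan is to build $Y$ as an $\alpha$-stage version of the classical ``space of a non-reflecting stationary set'', adapted so as to be $0$-dimensional (Theorem~\ref{tm:lord1} already yields a $T_2$ example, so only the $0$-dimensionality is really new here). Fix the regular cardinal $\kappa>\omega_1$ and a non-reflecting stationary $S\subseteq\kappa$; for each $\delta\in S$, using that $S\cap\delta$ is non-stationary in $\delta$, pick a closed cofinal $C_\delta\subseteq\delta$ with $C_\delta\cap S=\emptyset$. The set $Y$ will be laid out in $\alpha$ copies $L_\beta$ ($\beta<\alpha$) of $\kappa$ --- so $|Y|=\kappa$, since $\alpha<\kappa^+$ --- and a point $p$ of $L_\beta$ whose $\kappa$-index lies in $S$ will be declared the limit of a ladder $\ell_p$ made of points of other layers, the ladder being built from the sets $C_\delta$ so that it lands entirely in the ``non-$S$'' part of those layers; all remaining points stay isolated. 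The decisive structural lemma is that the canonical basic neighborhood $\{p\}\cup(\text{a tail of }\ell_p)$ is \emph{clopen}: its complement is open precisely because the ladders avoid $S$ and, being closed, have their limit points outside $S$ as well, so no two special points see each other's ladders cofinally past a common threshold. A bookkeeping argument then gives that $Y$ is Hausdorff and $0$-dimensional.

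Given the construction and this lemma, the inequality $\lord{Y}\le\kappa\cdot\alpha$ is easy: well-order $Y$ so that every point of $L_\beta$ precedes every point of $L_{\beta'}$ whenever $\beta$ is ``earlier'' than $\beta'$ in the layering (this is set up so that each special $p$ ends up before every member of $\ell_p$), and order each $L_\beta$ in type $\kappa$. An initial segment of this order is a union of whole layers $\bigcup_{\beta'<\beta}L_{\beta'}$ together with an initial piece of $L_\beta$; the former is closed because points of later layers accumulate only to other later layers, and the latter introduces no new limit point outside the segment because the only points that can accumulate to it sit in earlier layers and are therefore already present. This displays a left-separating well-order of type $\kappa\cdot\alpha$, so $\lord{Y}\le\kappa\cdot\alpha$.

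The reverse inequality $\lord{Y}\ge\kappa\cdot\alpha$ is where the non-reflection hypothesis does its work, and I expect it to be the main obstacle. The shape of the argument: suppose $\prec$ is left-separating on $Y$ with $\tp(\prec)<\kappa\cdot\alpha$; pass down through $Y$ layer by layer --- restricting a left-separating order to a subspace keeps it left-separating --- reducing by a suitable induction on $\alpha$ (with continuity of ordinal multiplication handling limit $\alpha$) to the statement that a single copy of $\kappa$, sitting in a ``margin'' of fewer than $\kappa$ positions, cannot carry the ladder system coming from $S$. That last statement is a non-uniformization property: pressing down (Fodor's lemma) over the club-indexed family of special points, one finds a special $p$ for which $p$ and all of $\ell_p$ fall inside the margin while $\ell_p\cap\{x:x\prec p\}$ is cofinal in $\ell_p$, whence $\{x:x\prec p\}$ fails to be closed, a contradiction. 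The two technical hot spots in a full write-up are the clopenness lemma --- one must keep the ladders mutually ``separated'' and coherent across all $\kappa$ indices of all $\alpha$ layers, including limit layers $\beta<\alpha$ --- and the pressing-down step, which genuinely needs a stationary (not merely unbounded) obstruction below each special point, and hence uses that $S$ is a non-reflecting \emph{stationary} set; these two demands pull in opposite directions, and routing everything through the non-reflecting $S$ is exactly what makes them compatible.
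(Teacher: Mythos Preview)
Your route is completely different from the paper's, and there is a real gap in the lower-bound half.

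The paper does not build a ladder space. It invokes Rinot's theorem that a non-reflecting stationary subset of a regular $\kappa>\omega_1$ yields the strong negative partition relation $Pr_1(\kappa,\kappa,\kappa,\omega)$; from the witnessing coloring it extracts a $\kappa$-HFD$_w$ subspace $W\subseteq 2^\kappa$, and then (Theorem~\ref{tm:leftfromheredsep}) recodes $W$ into $X\subseteq 2^\kappa$ partitioned into $\alpha+1$ pieces satisfying (S1)--(S6) of Theorem~\ref{tm:ordl-bounds}. Zero-dimensionality is free from the embedding in $2^\kappa$; the crucial density condition (S6) --- that every $\kappa$-sized piece of a higher block has a small subset whose closure meets each lower block in $\kappa$ points --- comes from the HFD$_w$ property. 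Theorem~\ref{tm:ordl-bounds} then gives $\kappa\cdot\alpha\le\lord X\le\kappa\cdot(\alpha{+}1)$, and Lemma~\ref{lm:appr-exact} trims to a closed $Y$ with $\lord Y=\kappa\cdot\alpha$ exactly.

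In your construction the clubs $C_\delta$ (with $C_\delta\cap S=\emptyset$) do double duty: they thin the basic neighborhoods enough to make them clopen, yet must remain fat enough that every $\kappa$-sized $A$ in a later layer accumulates to many special points in an earlier one. These demands pull against each other, and you have not shown they are compatible. Concretely, with the basic neighborhood of $p=(\beta,\delta)$ of the form $\{p\}\cup\{(\gamma,\nu):\nu\in C_\delta,\ \nu>\eta\}$, the point $p$ lies in $\overline A$ only when the index set of $A$ meets $C_\delta$ unboundedly below $\delta$; for your lower bound you need this to happen for $\kappa$-many $\delta\in S$ for \emph{every} unbounded $A$, which is a guessing property of $\langle C_\delta:\delta\in S\rangle$ that non-reflection alone does not provide. (Even the clopenness lemma already asks for something like mutual almost-disjointness of the $C_\delta$, which is not free either.) Your Fodor-plus-induction sketch presupposes that some layer gets ``squeezed'' into a short $\prec$-interval, but that squeeze is precisely what (S6)/(S7) deliver and what your ladders do not obviously secure. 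If a direct ladder construction can be made to work it would be more elementary than the paper's, but as written the density step is missing, and the paper's detour through $Pr_1$ and HFD$_w$ is there exactly to supply it.
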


It is well-known that if an uncountable right separated space $X$  is locally countable, 
then $\rord{X}=|X|$. The next theorem shows that the analogue of this statement is not true
for left separating spaces.

\begin{theorem}\label{tm:beth}
If ${\kappa}=\beth_{{\alpha}\dotplus1}$, where ${\alpha}=0$ or $\cf({\alpha})={\omega}$,
then  for each ${\alpha}<{\kappa}^+$
there is a locally compact, locally countable 0-dimensional space 
$X$ with $\lord{X}={\kappa}\cdot {\alpha}$.
\end{theorem}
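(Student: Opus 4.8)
The plan is a transfinite recursion on the ordinal, producing for every ordinal $\gamma<\kappa^+$ a locally compact, locally countable, $0$-dimensional Hausdorff space $X_\gamma$ with $\lord{X_\gamma}=\kappa\cdot\gamma$; the space required for a given ordinal is then $X_\gamma$ for that ordinal. The hypothesis on $\kappa$ is used through a single consequence: we may fix a cardinal $\mu$ with $\kappa=2^{\mu}$, $2^{<\mu}=\mu$ and $\cf(\mu)=\omega$ (take $\mu=\omega$ if $\kappa=\beth_1$, and $\mu=\beth_\alpha$ if $\kappa=\beth_{\alpha\dotplus1}$ with $\cf(\alpha)=\omega$); then $\kappa=\mu^{\cf(\mu)}$ and $\kappa^{\omega}=\kappa$. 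The first equality lets us present $\kappa$ as the set of branches of a tree $T$ of height $\omega$ all of whose levels have size $<\mu\le\kappa$, so that each branch still has a countable compact clopen ``fan'' neighbourhood $\{b\}\cup\{b\restriction n:n\ge m\}$ (bundled with the fans lying beneath its nodes), which is exactly what makes any branch space over such a tree locally compact, locally countable and $0$-dimensional. The second equality keeps $|X_\gamma|=\kappa$ at every stage, so the recursion closes.

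The space $X_\gamma$ is built by ``stacking'' $\gamma$ copies of a $\kappa$-branch building block — but, crucially, not as a topological sum: a topological sum of $\gamma$ fixed blocks can be left-separated far below type $\kappa\cdot\gamma$ (already $\kappa$ disjoint convergent sequences amalgamate into a space of left-separating type $\kappa$, not $\kappa\cdot2$). Instead the copies are glued along one master tree so that the branches of copy $\delta$ converge, diagonally across infinitely many earlier levels as in the Cantor tree, onto a closed Cantor--Bendixson-rank-$\delta$ skeleton assembled from the copies below $\delta$. Equivalently, $X_\gamma$ carries a continuous decreasing chain $\langle Z_\delta:\delta\le\gamma\rangle$ of closed sets with $Z_0=X_\gamma$, $Z_\gamma=\emptyset$, each $Z_{\delta+1}$ closed but not open in $Z_\delta$, and $L_\delta:=Z_\delta\setminus Z_{\delta+1}$ a ``$\kappa$-branch layer'' that cannot be left-separated until all of $Z_{\delta+1}$ has been. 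The upper bound $\lord{X_\gamma}\le\kappa\cdot\gamma$ then comes by left-separating each layer after the part of $Z_\delta$ that lies above it, processing the layers along an enumeration of type $\gamma$, with closedness of the intermediate initial segments at a limit $\lambda$ guaranteed by continuity of the chain (here one checks that $L_\delta$ is discrete in $Z_\delta$ modulo $Z_{\delta+1}$, since the fans of its branches point strictly into lower copies).

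The hard part is the matching lower bound $\lord{X_\gamma}\ge\kappa\cdot\gamma$, proved by induction on $\gamma$ with the successor step $\lord{X_{\delta+1}}\ge\lord{X_\delta}+\kappa$ as the core. In an arbitrary left-separating order one must simultaneously rule out running through the bottom layer $L=Z_0\setminus Z_1$ in order type below $\kappa$, and ``dissolving'' $L$ into $Z_1$; the mechanism is that once a closed initial segment of $X_{\delta+1}$ contains all of $Z_1$ — equivalently, all master-tree nodes lying in $L$ — it must, by closedness, contain every branch of $L$, so all $\kappa$ branches of $L$ are placed before ``all nodes of $L$ are placed'', and a ``$\kappa$-Luzin''-type choice of which branches of $L$ converge onto which part of $Z_1$ (a choice made available precisely by $\kappa=\mu^{\cf(\mu)}$ with $\mu$ strong limit, and unavailable for e.g.\ $\kappa={\omega}_1$ under $\neg\mathrm{CH}$ or $\kappa=\beth_{\omega}$) forces those branches to occupy, above the span already consumed by $Z_1$, a set of order type at least $\kappa$. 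I expect the genuine obstacles to be (i) making this ``all nodes in $\Rightarrow$ all branches in'' obstruction propagate through all $\gamma$ layers at once and survive the limit stages of the recursion on $\gamma$, and (ii) keeping the glued space locally compact and locally countable when the new branches are attached so as to converge onto points of $Z_{\delta+1}$ that are themselves already limit points — handled by building each new basic neighbourhood so that it absorbs, in a single countable compact clopen bundle, the finitely deep fans lying beneath.
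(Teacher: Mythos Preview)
Your proposal has the right instincts—the hypothesis is indeed used only through $\kappa=\lambda^{\omega}$ for a strong-limit $\lambda$ of countable cofinality, and a layered construction indexed by an ordinal is the correct shape—but there are two genuine gaps.

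First, the topology you describe is internally inconsistent. You require each $Z_\delta$ to be closed, yet you also claim that once a closed initial segment contains $Z_1$ it must contain every branch of $L_0$; that forces $L_0\subset\overline{Z_1}=Z_1$, contradicting $L_0=Z_0\setminus Z_1$. In the correct picture the convergence runs the other way: points of $Z_1$ are accumulation points of large subsets of $L_0$, while basic neighbourhoods of $L_0$-points reach only into $Z_1$. That is what simultaneously makes $Z_1$ closed and makes the order that processes $L_0$ last a legitimate left-separation.

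Second, and more seriously, the inductive lower bound does not work as stated. Even granting $\lord{Z_1}=\kappa\cdot\delta$, an arbitrary left-separating well-order of $X_{\delta+1}$ may intersperse $L_0$ and $Z_1$ freely; if all of $L_0$ is placed early, the total type collapses to $\kappa+\kappa\cdot\delta=\kappa\cdot\delta$ for $\delta\ge\omega$. Your mechanism ``branches of $L$ come before the last node'' does not prevent this. What is actually needed is a quantitative obstruction: for every $S\in\br{L_0};{\kappa};$ there is $R\in\br S;{<}\cf(\kappa);$ whose closure meets each deeper layer in size $\kappa$. This is the paper's condition (S\ref{S6}), and it is secured not by a generic fan picture but by carrying out the construction inside the Baire space $\baire{\lambda}$ and invoking the fact (Claim \ref{cl:baire-closure}) that every $\lambda^+$-sized subset contains a $\lambda$-sized set with closure of size $\kappa$. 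The lower bound is then obtained by a direct $\rho$-function argument (Theorem \ref{tm:ordl-bounds}(b)): with $\rho(\zeta)$ the first stage at which some rank-$\zeta$ piece has acquired $\kappa$ points, (S\ref{S6}) gives $\rho(\zeta)+\kappa\le\rho(\xi)$ for $\zeta<\xi$, hence $\lord X\ge\kappa\cdot\alpha$. No recursion on the target ordinal is attempted. Finally, the exact value $\kappa\cdot\alpha$ (rather than an interval $[\kappa\cdot\alpha,\kappa\cdot(\alpha\dotplus1)]$) is extracted by passing to a closed subspace via Lemma \ref{lm:appr-exact}, a step your outline does not address.
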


The following question remained open. 

\begin{problem}
Is it true, in ZFC, that for each uncountable cardinal ${\kappa}$
and for each ordinal ${\alpha}<{\kappa}$ there is a 0-dimensional, $T_2$ space
$X$ with ${\alpha}<\lord{X}<{\kappa}^+$?  In particular, what about ${\kappa}={\omega}_1$
and ${\kappa}=\aleph_{\omega}$? 
\end{problem}

The union of two scattered space is clearly scattered, so the class of right-separated spaces
is closed under finite union.  In section \ref{sc:add} we investigate the 
same question for left-separated spaces.  
First we show that {\em there are two dense left-separated subsets 
$A$ and $B$ of $2^{\mf c}$ such that 
$A\cup B$ is not left-separated} (see Example \ref{ex:union}). 
On the other hand, we also get some positive results.

\begin{theorem}\label{tm:union}
If $A$ and $B$ are left-separated spaces, 
$t(A\cup B) = \omega$ and 
 $\lord {A}+\lord{B}<\oo\cdot {\omega}$
, then 
 $A\cup B$ is left-separated and  $\lord{A\cup B} <\oo \cdot {\omega}$.
\end{theorem}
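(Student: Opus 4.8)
The plan is to work by induction on the ordinal $\lord{A}+\lord{B}$, using the structure of ordinals below $\oo\cdot\omega$: every such ordinal can be written uniquely as $\oo\cdot m + r$ with $m<\omega$ and $r<\oo$. The base case ($A\cup B$ countable, hence left-separated in type $\le\oo$) is immediate since every $T_1$ space of size $\le\oo$ is left-separated in type $<\oo\cdot\omega$ — more carefully, in type at most $\oo$. For the inductive step I would fix left-separating well-orders $\prec_A$ on $A$ and $\prec_B$ on $B$ of types $\lord A$ and $\lord B$, and try to interleave them: the naive attempt is to concatenate, putting all of $A$ before all of $B$, which gives a well-order of $A\cup B$ of type $\lord A+\lord B$, but the initial segments coming from proper initial segments of $A$ are closed in $A$, not necessarily in $A\cup B$, and likewise the sets of the form $A\cup(\text{initial segment of }B)$ need not be closed. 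So the concatenation does not work directly, and the real content is to repair it.

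The key idea I would use to repair it is countable tightness. Suppose $C$ is an initial segment of $A$ under $\prec_A$, so $C$ is closed in $A$; I want to understand $\overline{C}$ computed in $A\cup B$. A point $x\in B$ lies in $\overline{C}$ only if, by $t(A\cup B)=\omega$, there is a countable $C_0\subs C$ with $x\in\overline{C_0}$. Countable subsets of $A$ are left-separated in type $\le\oo$; this is where I expect the hypothesis $\lord A+\lord B<\oo\cdot\omega$ (equivalently, both ranks are $<\oo\cdot\omega$ and their sum stays below) to interact with the tightness to control how points of $B$ attach to initial segments of $A$. Concretely, I would stratify $A$ into the "blocks" determined by the decomposition of $\lord A$ into a sum $\oo\cdot m_A + r_A$, and similarly for $B$, and then build the left-separating order on $A\cup B$ block by block, at each stage absorbing into the current block the countably many (by tightness) "new" limit points that the other space contributes; the bookkeeping ensures that after finitely many blocks everything is exhausted, so the resulting type is again of the form $\oo\cdot m + r<\oo\cdot\omega$.

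I expect the main obstacle to be exactly this interleaving/bookkeeping step: showing that one can enumerate $A\cup B$ so that every initial segment is closed, while keeping the order type below $\oo\cdot\omega$. The danger is that repairing closedness at one point of $B$ forces pulling countably many points of $A$ earlier, which in turn may require pulling points of $B$ earlier, and one must argue this cascade terminates and does not inflate the type past $\oo\cdot\omega$ — this is where the finiteness of $m_A$ and $m_B$ and the fact that a countable set sits inside a single $\oo$-block are essential. A clean way to organize it is: first prove the special case $\lord A<\oo$ (i.e. $A$ countable) against arbitrary $B$ with $\lord B<\oo\cdot\omega$, showing $\lord{A\cup B}\le\lord B$ up to a countable correction, and then iterate this finitely many times along the blocks of $A$; the iteration is finite precisely because $\lord A<\oo\cdot\omega$ means $A$ is a finite union of countable-type pieces. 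I would present the special case in full and then note the general case follows by $m_A$-fold iteration, each iteration raising the type by at most $\oo$, so the total stays below $\oo\cdot\omega$.
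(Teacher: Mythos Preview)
Your reduction to finitely many pieces each left-separated in type $\le\omega_1$ is sound and matches how the paper sets things up: the paper proves directly that a union of $n$ such pieces with countable tightness has $\lord{}\le\omega_1\cdot n$, which yields the stated theorem. The gap is in what you call the ``special case''. You promise to prove it in full but do not, and your sketch rests on a false intuition: you write that at each stage one ``absorbs the countably many (by tightness) new limit points that the other space contributes'', but countable tightness does \emph{not} say a countable set has countable closure --- a countable dense $A$ can have $\overline{A}\cap B$ of size $\omega_1$. So the cascade you worry about is real, and nothing in your outline stops it. Likewise the claim ``$\lord{A\cup B}\le\lord B$ up to a countable correction'' when $A$ is countable is unproven and, read literally, false.

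What the paper actually does for the heart of the argument is quite different from interleaving blocks. Given pieces $X_0,\dots,X_{n-1}$, it forms the partition
\[
Z_A=\Bigl(\bigcap_{j\notin A}\overline{X_j}\Bigr)\setminus\Bigl(\bigcup_{j\in A}\overline{X_j}\Bigr),\qquad
Y_i=\bigcup_{A\in[n]^i}Z_A,
\]
shows $\bigcup_{j\le i}Y_j$ is closed, and then proves each $Y_i$ is left-separated in type $\le\omega_1$ using a continuous chain $(N_\alpha)_{\alpha<\omega_1}$ of countable elementary submodels containing the $X_j$'s. The crucial step --- and the one your proposal is missing --- is that $\overline{Z_A\cap N_\alpha}\cap Z_A\subseteq N_\alpha$. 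Here countable tightness is used correctly: a point $x\in Z_A\cap N_\alpha$ lies in $\overline{H}$ for some countable $H\subseteq X_j$ with $H\in N_\alpha$, hence $H\subseteq N_\alpha$, so $x\in\overline{X_j\cap N_\alpha}$; and because $X_j$ is left-separated in type $\omega_1$, the set $X_j\cap N_\alpha$ is an initial segment of that order, hence closed in $X_j$, which traps $x$ back in $N_\alpha$. This closure-reflection via elementary submodels is the mechanism that tames the cascade; your block-by-block bookkeeping does not supply an analogue of it.
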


\medskip

Properties ``left-separated and ``right-separated'' are upward absolute
because a left-separating (respectively, right-separating) well-order
remains left-separating (respectively, right-separating) 
in any extension of the ground model.

Property ``right-separated'' is also downward absolute because
if a space is scattered in any extension that it is also scattered 
in the ground model. 
So the  property ``right-separated'' is absolute.

What about  property ``left-separated''?  Since a countable $T_1$ space is 
automatically left-separated, we  consider only cardinal preserving extensions.
Moreover, a subspace $S\subs {\omega}_1 $
 is left-separated if and only if it is stationary, so it is reasonable to  consider only 
 stationary preserving forcing extensions, in particular, c.c.c generic extensions.
 
 In section \ref{sc:pres}, we investigate if c.c.c generic extensions preserve
  ``not left-separated'' property.  We prove the following result. 

 \begin{theorem}\label{tm:presccc}
(1) If $X$ is a topological space, $Q$ is a c.c.c poset such that 
\begin{align*}
 V^Q\models \lord{X}<\oo\cdot \omega,
\end{align*}
then  $X$ is left-separated  even in $V$, moreover
\begin{align*}
 \lord{X}^V=\lord{X}^{V^Q}.      
\end{align*}
(2) It is consistent that there is a $0$-dimensional, first countable topological  space $X$
 such that  
\begin{align*}
\lord{X}=\infty, 
\end{align*}
but 
\begin{align*}
 V^Q\models \lord{X}=\oo\cdot \omega
\end{align*}
for some c.c.c poset $Q$.
 \end{theorem}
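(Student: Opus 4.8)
The plan is to handle part (1) by a forcing–absoluteness analysis, and part (2) by a $\diamondsuit$-style construction; part (1) will also be invoked in part (2) to pin down the exact value $\oo\cdot\omega$.

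\emph{Part (1).} Any left-separating well order of $X$ in $V$ is still left-separating in $V^Q$, so $\lord X^{V^Q}\le\lord X^V$; it therefore suffices to prove that for every $\gamma<\oo\cdot\omega$, if $Q$ is c.c.c.\ and $V^Q\models\lord X\le\gamma$, then $\lord X\le\gamma$ in $V$ (apply this with $\gamma=\lord X^{V^Q}$). Note $Q$ preserves $\oo$ and all cardinals, so $|X|\le\aleph_1$ in $V$ as well; recall also that a well order $\prec$ of $X$ is left-separating iff $y\notin\overline{\{x:x\prec y\}}$ for every $y\in X$, and that left-separatedness is hereditary (restrict the well order to the subspace). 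When $X$ is countable, left-separatedness is equivalent to the following extension property of the tree of finite injective sequences from $X$ all of whose initial segments are closed in $X$: every node extends, for each $x\in X$, to a node with $x$ in its range — the two directions using, respectively, that subspaces of left-separated spaces are left-separated together with the fact that an infinite countable left-separated space has left-separating type $\omega$, and a straightforward greedy branch construction. Since closedness of a \emph{finite} subset of $X$ is absolute and $X$ is countable, this property is absolute between $V$ and $V^Q$; this settles the countable case, with $\lord X^V=\lord X^{V^Q}$.

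Now suppose $|X|=\aleph_1$; to exhibit the method I treat the case $\gamma\le\oo$ (i.e.\ $V^Q\models$ ``$X$ left-separated in type $\le\oo$''). Fix a $Q$-name $\dot\prec$ for such a well order, of type $\rho\le\oo$, forced by a condition $q$, and let $\dot e$ name the induced increasing enumeration $\rho\to X$. In $V$, choose a continuous $\in$-increasing chain $\langle N_\xi:\xi<\oo\rangle$ of countable elementary submodels of some $H(\theta)$ with $X,\tau,Q,\dot\prec,q\in N_0$ and $X\subseteq\bigcup_\xi N_\xi$; put $C_\xi=N_\xi\cap X$. For a generic $G\ni q$, c.c.c.\ gives $N_\xi[G]\prec H(\theta)^{V[G]}$ with $N_\xi[G]\cap\oo=N_\xi\cap\oo$ and $N_\xi[G]\cap X=N_\xi\cap X=C_\xi$ (every $X$-element named in $N_\xi$ has its value in a ground-model countable set lying in $N_\xi$). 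As $\dot e\in N_\xi$, in $V[G]$ we get $C_\xi=e[\,\rho\cap(N_\xi\cap\oo)\,]$, a $\prec$-initial segment, hence closed in $X$; closedness of the ground-model set $C_\xi$ being absolute, each $C_\xi$ is closed in $X$ in $V$ as well. Thus in $V$ the $C_\xi$ form an increasing continuous chain of countable closed subsets of $X$ with union $X$. Moreover $C_0$ and each $C_{\xi+1}\setminus C_\xi$, being countable subspaces of the left-separated space $X$ in $V[G]$, are left-separated in $V[G]$, hence — by the absolute countable criterion above, applied in $V$ — are left-separated in $V$ via well orders whose initial segments are relatively closed in $C_{\xi+1}$, hence closed in $X$ (since $C_\xi$ and $C_{\xi+1}$ are closed). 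Concatenating these well orders along the chain produces, in $V$, a left-separating well order of $X$ of type $\le\sum_{\xi<\oo}\omega=\oo$. For the general case $\gamma<\oo\cdot\omega$ one peels off the $\oo$-blocks one at a time by the same elementary-submodel device; arranging this peeling inside $V$ — so that the stripped-off closed, ${\le}\,\oo$-left-separated pieces and the residues again fall under the induction — is the main technical point of part~(1).

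\emph{Part (2).} Under $\diamondsuit$ one builds a first countable, $0$-dimensional space $X$ of size $\aleph_1$ — a ladder-system space on $\oo$ together with countably many auxiliary isolated points, carrying a coloring/linking built into the topology — arranged so that (after checking the shape of the closed subsets of $X$) $X$ is left-separated, \emph{in any order type}, exactly when an associated uniformization problem $U(\bar\eta,\bar c)$ has a solution, i.e.\ when there is $f\colon\oo\to\omega$ following the prescribed colors along each ladder eventually; and such an $f$ yields an $\omega$-block decomposition of $X$ into layers that are closed-and-discrete in the successive open residues, hence a left-separating well order of type $\le\oo\cdot\omega$. The $\diamondsuit$-recursion is designed to carry out two tasks simultaneously: diagonalising $\bar\eta$ and $\bar c$ against every candidate solution guessed by the $\diamondsuit$-sequence, so that $U(\bar\eta,\bar c)$ fails in $V$ and $\lord X=\infty$; and keeping $\bar\eta$ thin enough that the poset $Q$ of finite partial approximations to such an $f$ stays c.c.c.\ (two members of any uncountable subset becoming compatible after a $\Delta$-system reduction). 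Then $V^Q$ contains a solution $f$, so $V^Q\models\lord X\le\oo\cdot\omega$; and since $\lord X^V=\infty$, part~(1) forbids $\lord X^{V^Q}<\oo\cdot\omega$, whence $V^Q\models\lord X=\oo\cdot\omega$. The points demanding genuine care are the design of the coloring so that left-separatedness of $X$ really does force a solution of $U$ (ruling out any ``trivial'' left-separation, such as the identity decomposition of a bare ladder space), and the balancing of the c.c.c.-ness of $Q$ against the unsolvability of $U$ in the ground model; these, together with the peeling step of part~(1), are where the real work lies.
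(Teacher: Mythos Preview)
Your base case of Part~(1) ($\lord X\le\omega_1$ in $V^Q$) is correct and is essentially what the paper does, just phrased with elementary submodels rather than a directly computed club $E\subseteq\omega_1$: in both arguments one produces a club of countable closed initial segments of $X$ and concatenates. The gap is the inductive step. When the generic order $\prec$ has type $\omega_1\cdot n$ with $n\ge 2$, the trace $C_\xi=N_\xi\cap X$ meets each $\omega_1$-block $A_k$ in an initial segment of $A_k$, but $C_\xi$ itself is \emph{not} a $\prec$-initial segment (points of $C_\xi\cap A_1$ sit $\prec$-above points of $A_0\setminus C_\xi$), so there is no reason $C_\xi$ should be closed; indeed, if your device produced closed $C_\xi$'s here it would prove $\lord X\le\omega_1$ in $V$, which is too strong. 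What the induction actually needs is a ground-model closed $A^*\subseteq X$ with $1_Q\Vdash A^*\subseteq\dot A$ (where $\dot A$ names the first $\omega_1\cdot(n-1)$ points) and $\lord{X\setminus A^*}\le\omega_1$ in $V$. The paper supplies exactly this as its key lemma, and the proof is a transfinite closure process of length $<\omega_2$: starting from the $\omega_1$-block partition coming from the club, one repeatedly strips each block of accumulation points of the union of earlier blocks, showing inductively that every removed point is forced into $\dot A$; the process must stabilize below $\omega_2$, and the stable $A^*$ works. Nothing in the elementary-submodel picture hints at this iteration, and ``the same device'' will not produce it.

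For Part~(2) your route is entirely different from the paper's. The paper does not use $\diamondsuit$ or ladder systems at all: it forces with the c.c.c.\ poset $\mc P^{\kappa}$ of Juh\'asz--Soukup--Szentmikl\'ossy to obtain a generic first-countable $0$-dimensional space $X$ on $\kappa$ with $R(X)=\omega$ (hence no uncountable left-separated subspace), and then exhibits an explicit c.c.c.\ poset $Q$ of finite approximations to a level function $d:X\to\omega$ together with separation witnesses $e:X\to\omega$; the amalgamation machinery for $\mc P^{\kappa}$ is what shows $\mc P^{\kappa}*Q$ is c.c.c., and the generic $d$ gives a left-separation of type $\kappa\cdot\omega$. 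Your uniformization sketch is plausible in outline, and your use of Part~(1) to pin down the exact value $\omega_1\cdot\omega$ is correct (the paper relies on the same implication). But the two balancing acts you flag are genuine obstacles you have not met: designing $X$ so that \emph{every} left-separating well order yields a uniformizing function (not just the other way round), and simultaneously $\diamondsuit$-diagonalizing the coloring against all uniformizers while keeping the natural uniformization poset c.c.c. Neither is routine, and no construction is offered.
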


The following questions remained open. 

\begin{problem}
Is there, in ZFC, a   $0$-dimensional, first countable topological  space $X$
which is not left-separated, but which becomes left-separated in some
c.c.c generic extension?  
\end{problem}

\begin{problem}
Is it consistent that there is a left-separated space $X$ 
such that 
\begin{align*}
 \lord{X}^{V^Q}<\lord{X}^V
\end{align*}
for some c.c.c.(proper) poset $Q$?
\end{problem}

\noindent{\bf Notation.}

If $P$ is a property, we write $X\subs ^P Y$ to mean that ``$X\subs Y$ and $X$
 has property $P$''.  In particular, 
 \begin{itemize}
  \item  if ${\kappa}$ is a cardinal and  
 $S, T\subs {\kappa}$, then we write $S\subs^{stat}T$ if $S\subs T$
 and $S$ is stationary in ${\kappa}$;
\item if $X$ is a topological space and $A$ is a set, then we write
$A\subs^{closed}X$ if $A\subs X$ and $A$ is closed in $X$.
  \end{itemize}

If ${\alpha}$ and ${\beta}$ are ordinal, then
${\alpha}\dotplus {\beta}$ and ${\alpha}\cdot {\beta}$ denote ordinal addition
  and ordinal multiplication, respectively.

 Given an ordinal ${\alpha}>0$ let 
\begin{align*}
{\alpha}\mydotminus1=\left\{
\begin{array}{ll}
 {\alpha}&\text{if ${\alpha}$ is a limit ordinal,}\\
 {\beta}&\text{if ${\alpha}={\beta}\oplus 1$ is a successor ordinal.}
\end{array}
\right.
\end{align*}

\section{The values of the $\lordop$ function}
\label{sc:lord}

To prove Theorems \ref{tm:lord1}, \ref{tm:lord2} and   \ref{tm:beth} 
we need Lemma \ref{lm:appr-exact} and  Theorem \ref{tm:ordl-bounds} below. 

\begin{lemma}\label{lm:appr-exact}
If ${\kappa}$ is an infinite cardinal and 
$${\kappa}\cdot{\alpha}\le \lord{X}\le {\kappa}\cdot({\alpha}\dotplus 1),$$  then there is 
a closed subspace $Y\subs^{closed}X$ with $\lord{Y}={\kappa}\cdot{\alpha}$.
\end{lemma}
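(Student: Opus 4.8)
The plan is to take a left-separating well-ordering of $X$ of minimal order type and truncate it at level ${\kappa}\cdot{\alpha}$. Fix a left-separating well-ordering $\prec$ of $X$ whose order type is $\mu:=\lord{X}$; by hypothesis ${\kappa}\cdot{\alpha}\le\mu\le{\kappa}\cdot{\alpha}\dotplus{\kappa}$. If $\mu={\kappa}\cdot{\alpha}$ we are done with $Y=X$, so assume ${\kappa}\cdot{\alpha}<\mu$, and let $Y$ be the set of the first ${\kappa}\cdot{\alpha}$ elements of $(X,\prec)$ (this makes sense because $\mu\ge{\kappa}\cdot{\alpha}$). The first point to record is the elementary observation that in a left-separating well-ordering every downward closed set is closed: a proper downward closed set is of the form $\{x:x\prec y\}$ (take $y$ the $\prec$-least element of the complement), which is closed by definition, and $X$ itself is closed. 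Consequently $Y$ is closed in $X$, and $\prec\restriction Y$ is left-separating on $Y$ — for $y\in Y$ we have $\{x\in Y:x\prec y\}=\{x\in X:x\prec y\}$, which is closed in $X$ and hence closed in $Y$. Thus $\lord{Y}\le{\kappa}\cdot{\alpha}$, and it remains only to prove the reverse inequality.

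To see $\lord{Y}\ge{\kappa}\cdot{\alpha}$, I would argue by contradiction: suppose $\nu:=\lord{Y}<{\kappa}\cdot{\alpha}$, witnessed by a left-separating well-ordering $\prec'$ of $Y$ of type $\nu$. Form a new well-ordering $\prec^*$ of $X$ by listing the elements of $Y$ first in the order $\prec'$ and then the elements of $X\setm Y$ in the order $\prec$. The key verification is that $\prec^*$ is again left-separating: for $y\in Y$ the set of $\prec^*$-predecessors of $y$ is a $\prec'$-initial segment of the closed subspace $Y$, hence closed in $Y$ and so closed in $X$; for $z\in X\setm Y$, every element of $Y$ already $\prec$-precedes $z$ (because $Y$ consists of the first ${\kappa}\cdot{\alpha}\le\rank_{\prec}(z)$ elements), so the set of $\prec^*$-predecessors of $z$ coincides with $\{x\in X:x\prec z\}$, which is closed. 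I expect this gluing step to be the main point of the argument: everything hinges on $Y$ being a closed, downward closed set that sits entirely below the appended tail.

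It then remains to compute the order type of $\prec^*$ and derive the contradiction. The tail $X\setm Y$ has $\prec$-order type $\delta$, where ${\kappa}\cdot{\alpha}\dotplus\delta=\mu$; since ${\kappa}\cdot{\alpha}<\mu\le{\kappa}\cdot{\alpha}\dotplus{\kappa}$ we get $0<\delta\le{\kappa}$, so $\prec^*$ has type $\nu\dotplus\delta$. Writing $\nu={\kappa}\cdot{\beta}\dotplus{\gamma}$ with ${\gamma}<{\kappa}$ (division with remainder by ${\kappa}$), the assumption $\nu<{\kappa}\cdot{\alpha}$ forces ${\beta}<{\alpha}$, whence
\begin{align*}
\nu\dotplus\delta={\kappa}\cdot{\beta}\dotplus({\gamma}\dotplus\delta)\le{\kappa}\cdot{\beta}\dotplus{\kappa}={\kappa}\cdot({\beta}\dotplus1)\le{\kappa}\cdot{\alpha}<\mu .
\end{align*}
Thus $\prec^*$ is a left-separating well-ordering of $X$ of type strictly below $\mu=\lord{X}$, contradicting minimality of $\mu$. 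Hence $\lord{Y}={\kappa}\cdot{\alpha}$, and $Y\subs^{closed}X$ is the required subspace. Apart from the gluing verification, the remaining ingredients — closedness of downward closed sets and the ordinal bookkeeping via division by ${\kappa}$ — are routine.
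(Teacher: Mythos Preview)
Your proof is correct and follows essentially the same approach as the paper: take $Y$ to be the initial segment of length ${\kappa}\cdot{\alpha}$ in a minimal left-separating ordering, then argue that $\lord{Y}<{\kappa}\cdot{\alpha}$ would let you glue a shorter ordering of $Y$ onto the tail to contradict minimality of $\lord{X}$. The paper compresses the gluing step into the single inequality $\lord{X}\le\lord{Y}\dotplus\lord{X\setm Y}\le\beta\dotplus{\kappa}\le{\kappa}\cdot{\alpha}$, whereas you spell out both the verification that the concatenated order is left-separating and the ordinal arithmetic, but the underlying argument is identical.
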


\begin{proof}
Fix a bijection $f:X\to \lord{X} $
such that $f^{-1}{\zeta}\subs^{closed}X$ for each ${\zeta}<\lord{X}$.

Consider the subspace $Y=f^{-1}{\kappa}\cdot{\alpha}$.

The function  $f$ witnesses that $\lord{Y}\le {\kappa}\cdot{\alpha}$.
If $\lord{Y}= {\kappa}\cdot{\alpha}$, then we are done.

If $\lord{Y}={\beta}<{\kappa}\cdot{\alpha}$, then $\lord{X}\le \lord{Y}\dotplus \lord{X\setm Y}\le
{\beta}\dotplus {\kappa}\le {\kappa}\cdot{\alpha}$.  Thus $\lord{X}={\kappa}\cdot{\alpha}$ and so $Y=X$
satisfies the requirements.
\end{proof}

\begin{definition}
 If $\mc T=\<T,\preceq\>$  is a well-founded partially ordered set, then we 
 define the {\em rank function} 
\begin{equation*}
 \rank_{\mc T}:T\to  \mb {On}
\end{equation*}
by the recursive formula  
$$\rank_{\mc T}(t)=\sup\{\rank_{\mc T}(s)\dotplus 1:s\prec t\in T\}.$$
Hence the minimal elements of $\mc T$ have rank $0$.
The rank of $\mc T$ is defined as follows:
\begin{align*}
\rank({\mc T})=&\sup\{\rank_{\mc T}(t)\dotplus 1: t\in T\}=\\
&\min\big(\mb{On}\setm \ran(\rank_{\mc T})\big)=\ran(\rank_{\mc T}). 
\end{align*}
\end{definition}

The next theorem makes us possible the find upper and lower bounds of $\lord{X}$
for  certain spaces.

\begin{theorem}\label{tm:ordl-bounds}
Assume that 
\begin{enumerate}[(S1)]
 \item ${\kappa}\ge \cf({\kappa})>{\omega}$ is a   cardinal,
 \item $\mc T=\<T,\preceq\>$ is a well-founded partially ordered set with $|T|\le {\kappa}$,
 \item  $\mc X=\<X,\tau\>$ is a topological space with  $|X|={\kappa}$,
 \item $\{X_t:t\in T\}\subs \br X;{\kappa};$ is a partition of $X$, 
 \end{enumerate}

\smallskip
\noindent (a)
If  
\begin{enumerate}[(S1)]
\addtocounter{enumi}{4}
 \item for each $s\in T$ every   $x\in X_s$  has a neighborhood $U(x)$ such that 
$$U(x)\setm \{x\}\subs \bigcup\{X_t:s\prec t\in T\},$$
\end{enumerate}
then $\mc X$ is left-separated and 
\begin{equation*}
\text{$ \lord{\mc X}\le {\kappa}\cdot \rank({\mc T})$.} 
\end{equation*}

\smallskip
\noindent (b)
If 
\begin{enumerate}[(S1)]
\addtocounter{enumi}{5}
\item \label{S6} for each $s\prec t\in T$ and $S\in \br X_t;{\kappa};$ 
there is $R\in \br  S;<\cf({\kappa});$ such that  
$|\overline{R}^{\tau}\cap X_s|={\kappa}$,
\end{enumerate}
then 
\begin{equation*}
\text{${\kappa}\cdot(\rank({\mc T})\dot -1)\le \lord{\mc X}$.} 
\end{equation*}

\smallskip
\noindent (c)
If 
\begin{enumerate}[(S1)]
\addtocounter{enumi}{6}
\item \label{S7}
\begin{enumerate}[(i)]
 \item 
$X\subs {\kappa}=\cf({\kappa})$, \item  the topology $\tau$ refines 
the order topology 
on ${\kappa}$, and \item 
 for each $s\prec t\in T$ and $S\in \br X_t;{\kappa};$ we have 
$\overline{S}^{\tau}\cap X_s\subs^{stat}{\kappa}$,
\end{enumerate}

\end{enumerate}
then 
\begin{equation*}
\text{${\kappa}\cdot(\rank({\mc T})\mydotminus1)\le \lord{\mc X}$.} 
\end{equation*}
\end{theorem}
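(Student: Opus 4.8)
The plan is to derive all three parts from a single characterisation of left-separating well-orders, together with an induction on $\rank_{\mc T}$ that locates the points of each block $X_t$ inside any left-separating well-order.

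\emph{Part (a).} I will use the (elementary) fact that a well-order $\prec$ of a space $X$ is left-separating if and only if $x\notin\overline{\{y:y\prec x\}}$ for every $x\in X$: if $z\in\overline{\{y:y\prec x\}}$ but $z\not\prec x$, then $x\preceq z$, the case $z=x$ contradicts the condition at $x$, and the case $x\prec z$ gives $z\in\overline{\{y:y\prec z\}}$ since $\{y:y\prec x\}\subseteq\{y:y\prec z\}$, contradicting the condition at $z$. Now put $Y_{\rho}=\bigcup\{X_t:\rank_{\mc T}(t)=\rho\}$ for $\rho<\rank(\mc T)$; by (S2) and (S4) each $Y_{\rho}$ is a union of at most $\kappa$ members of $\br X;{\kappa};$, so $|Y_{\rho}|\le\kappa$ and $Y_{\rho}$ admits a well-order of type $\le\kappa$. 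Let $\prec$ order $X$ by increasing $\rho$ and, within each $Y_{\rho}$, by the chosen well-order; then $\operatorname{ot}(\prec)\le\sum_{\rho<\rank(\mc T)}\kappa=\kappa\cdot\rank(\mc T)$. For left-separation, if $x\in X_s$ then (S5) gives $U(x)\setminus\{x\}\subseteq\bigcup\{X_t:s\prec t\}$, and each such $t$ has $\rank_{\mc T}(t)>\rank_{\mc T}(s)$, so every point of $U(x)\setminus\{x\}$ lies in a block that $\prec$-succeeds $x$; hence $U(x)\cap\{y:y\prec x\}=\emptyset$, $\prec$ is left-separating, and $\lord{\mc X}\le\kappa\cdot\rank(\mc T)$.

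\emph{Parts (b) and (c).} We may assume $\mc X$ is left-separated (otherwise the inequality is vacuous); fix a left-separating well-order $\prec$ of $\mc X$ and write $p(x)$ for its $\prec$-position, so every initial segment $X_{<\xi}:=\{x:p(x)<\xi\}$ is $\tau$-closed and hence $\overline{S}^{\tau}\subseteq X_{<\xi}$ whenever $S\subseteq X_{<\xi}$. The key is to prove, by $\preceq$-induction on $t\in T$, that for every $\sigma<\rank_{\mc T}(t)$ the set $\{x\in X_t:p(x)<\kappa\cdot\sigma\}$ has cardinality $<\kappa$. Granting this: for each ordinal $\sigma$ with $\sigma\dotplus1<\rank(\mc T)$ pick, using $\ran(\rank_{\mc T})=\rank(\mc T)$, a $t$ with $\rank_{\mc T}(t)=\sigma\dotplus1$; applying the statement to $t$ at level $\sigma$ leaves $\kappa$ points of $X_t$ with $p$-value $\ge\kappa\sigma$, so $\operatorname{ot}(\prec)\ge\kappa\sigma\dotplus\kappa=\kappa\cdot(\sigma\dotplus1)$; since $\rank(\mc T)\mydotminus1=\sup\{\sigma\dotplus1:\sigma\dotplus1<\rank(\mc T)\}$ and $\kappa\cdot(\,\cdot\,)$ is continuous on the right, $\operatorname{ot}(\prec)\ge\kappa\cdot(\rank(\mc T)\mydotminus1)$, which is the assertion of (b) and of (c). For the inductive step fix $t$, assume the statement for all $s\prec t$, take $\sigma<\rho:=\rank_{\mc T}(t)$, and suppose for contradiction that $S:=\{x\in X_t:p(x)<\kappa\sigma\}$ has size $\kappa$. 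Since $\rho>\sigma$ one can choose $s\prec t$ with $\rank_{\mc T}(s)\ge\sigma$ (taking $\rank_{\mc T}(s)\ge\delta\dotplus1$ if $\sigma=\delta\dotplus1$), so the inductive statement for $s$ is available at every level $<\rank_{\mc T}(s)$. In case (b), (S6) yields $R\in\br S;<\cf(\kappa);$ with $|\overline{R}^{\tau}\cap X_s|=\kappa$; since $|R|<\cf(\kappa)$, while $\cf(\kappa\sigma)\ge\cf(\kappa)$ when $\sigma$ is a limit of cofinality $\ge\cf(\kappa)$ and $\cf(\kappa(\delta\dotplus1))=\cf(\kappa)$, the positions of $R$ are bounded strictly below $\kappa\sigma$, so $\overline{R}^{\tau}\cap X_s$ sits in a closed initial segment $X_{<\kappa\nu}$ with $\nu<\sigma$, resp. in $X_{<\kappa\delta\dotplus\beta}$ with $\beta<\kappa$; then $X_s$ has $\kappa$ points below $\kappa\nu$ (resp., after deleting the fewer than $\kappa$ points of $X_s$ with $p$-value $<\kappa\delta$ that the inductive statement at level $\delta$ provides, $\kappa$ points with $p$-value in $[\kappa\delta,\kappa\delta\dotplus\beta)$), contradicting the inductive statement for $s$; when instead $\sigma$ is a limit of cofinality $<\cf(\kappa)$ one writes $\{x\in X_t:p(x)<\kappa\sigma\}$ as a union of fewer than $\cf(\kappa)$ of the sets $\{x\in X_t:p(x)<\kappa(\mu\dotplus1)\}$, each of size $<\kappa$ by an earlier successor step. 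In case (c), (S7) gives that $C:=\overline{S}^{\tau}\cap X_s$ is stationary in $\kappa$; for $x\in C$ the set $([0,x]\cap X)\cap\{y:p(y)\ge p(x)\}$ is $\tau$-open (the order-open set $[0,x\dotplus1)\cap X$ intersected with $X\setminus X_{<p(x)}$, using (S7)(ii)) and contains $x$, hence meets $S$ because $x\in\overline{S}^{\tau}$, and as $x\notin S$ this yields $y_x\in S$ with $y_x<x$ and $p(y_x)\ge p(x)$; the regressive map $x\mapsto y_x$ on the stationary set $C$ is constant, equal to some $y^{*}\in S$, on a stationary $C^{*}\subseteq C$ (Fodor; here $\kappa$ is regular uncountable by (S1) and (S7)(i)), so $p(x)\le p(y^{*})<\kappa\sigma$ for all $x\in C^{*}$, and exactly as in case (b) this bound ($<\kappa\nu$ with $\nu<\sigma$, or $<\kappa\delta\dotplus\beta$ with $\beta<\kappa$) together with the inductive statement for $s$ contradicts $|C^{*}|=\kappa$.

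The step I expect to be the main obstacle is closing the induction at limit ordinals $\sigma$ of cofinality $\ge\cf(\kappa)$ and, more generally, in the tight case $\rank_{\mc T}(t)=\sigma\dotplus1$: a single use of (S6) or (S7) at level $\sigma$ only forces the offending $\kappa$ points of $X_s$ below $\kappa\sigma\dotplus\kappa$, not below $\kappa\cdot\rank_{\mc T}(s)$. The resolution is precisely the choice of $s\prec t$ with $\rank_{\mc T}(s)\ge\sigma$, which — via the smallness of $R$ in (b) and via Fodor's lemma (where (S7)(i),(ii) are essential) in (c) — pushes the bounding ordinal strictly below $\kappa\cdot\rank_{\mc T}(s)$, making the inductive hypothesis for $s$ applicable; Lemma \ref{lm:appr-exact} then converts the resulting interval bound into the exact value $\kappa\cdot\alpha$ needed for Theorems \ref{tm:lord1}, \ref{tm:lord2} and \ref{tm:beth}.
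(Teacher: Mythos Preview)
Your argument for (a) is essentially the paper's. For (b) and (c), however, you take a genuinely different route. The paper does not induct on $t$ at all: it defines a single ordinal-valued function $\rho$ on $\rank(\mc T)$ by
\[
\rho(\zeta)=\min\{\beta'\le\lord{\mc X}:\exists t\ (\rank_{\mc T}(t)=\zeta\ \wedge\ |f^{-1}\beta'\cap X_t|=\kappa)\},
\]
proves one short claim for each of (S6) and (S7) showing that $\rho(\zeta)<\rho(\xi)$ whenever $\zeta<\xi$, observes that any $\beta''<\rho(\xi)$ automatically satisfies $\beta''\dotplus\kappa\le\rho(\xi)$, and then feeds $\rho$ into the elementary Lemma~\ref{cl:rhorange}. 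For (S7) the paper does \emph{not} use Fodor: it builds a club $D\subs\kappa$ on which the left-separating bijection $f$ synchronises simultaneously with the ordinal order on $X_s$ and on $X_t$, picks a single point of $D$ in the stationary set $\overline S^\tau\cap X_s$, and reads off a contradiction with closedness of one initial segment $f^{-1}\gamma_\nu$.

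Your approach instead proves, by an outer $\preceq$-induction on $t$ together with an inner induction on $\sigma$, the finer statement $|\{x\in X_t:p(x)<\kappa\sigma\}|<\kappa$ for every $\sigma<\rank_{\mc T}(t)$; the small-cofinality limit case of $\sigma$ is handled by the inner induction, and case (c) is dispatched via a regressive map and Fodor. Both arguments are correct. The paper's is shorter and avoids your cofinality case-split by absorbing it into the single monotone function $\rho$; yours is more explicit about where the $\kappa$-blocks sit inside an arbitrary left-separating well-order and isolates the exact use of regularity in (c) through Fodor's lemma rather than through a synchronising club.
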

To prove (b) and (c) we will need the following easy lemma. 

\begin{lemma}\label{cl:rhorange}
If ${\alpha}\in \mb{On}$ and  ${\sigma}:{\alpha}\to \mb{On}$
is a function such that 
\begin{displaymath}
 {\sigma}({\zeta})\dotplus{\kappa}\le {\sigma}({\xi}) \text{ for each ${\zeta}<{\xi}<{\alpha}$,}
\end{displaymath}
then $${\kappa}\cdot ({\alpha}\mydotminus1)\le \sup {\sigma}''{\alpha}.$$
\end{lemma}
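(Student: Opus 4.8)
The plan is to prove by transfinite induction the pointwise bound $\kappa\cdot\xi\le\sigma(\xi)$ for every $\xi<\alpha$, and then obtain the stated inequality for $\sup\sigma''\alpha$ by splitting into the successor and limit cases of $\alpha$.

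For the induction I would argue as follows. The base case is $\kappa\cdot 0=0\le\sigma(0)$. For a successor $\xi=\eta\dotplus 1$, the hypothesis on $\sigma$ gives $\sigma(\xi)\ge\sigma(\eta)\dotplus\kappa$, and combining the induction hypothesis $\kappa\cdot\eta\le\sigma(\eta)$ with the (weak) monotonicity of ordinal addition in the first argument — $\gamma\le\delta$ implies $\gamma\dotplus\kappa\le\delta\dotplus\kappa$ — we get $\sigma(\xi)\ge\kappa\cdot\eta\dotplus\kappa=\kappa\cdot(\eta\dotplus 1)=\kappa\cdot\xi$. For a limit $\xi$, I would first observe that the hypothesis on $\sigma$ already implies plain monotonicity, since $\sigma(\eta)\le\sigma(\eta)\dotplus\kappa\le\sigma(\xi)$ for all $\eta<\xi$; hence, using the induction hypothesis together with continuity of ordinal multiplication in the second argument, $\sigma(\xi)\ge\sup_{\eta<\xi}\sigma(\eta)\ge\sup_{\eta<\xi}\kappa\cdot\eta=\kappa\cdot\xi$.

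To conclude: if $\alpha$ is a limit ordinal, then $\alpha\mydotminus 1=\alpha$ and $\kappa\cdot\alpha=\sup_{\xi<\alpha}\kappa\cdot\xi\le\sup_{\xi<\alpha}\sigma(\xi)=\sup\sigma''\alpha$ by the pointwise bound; if $\alpha=\beta\dotplus 1$ is a successor, then $\alpha\mydotminus 1=\beta<\alpha$, so $\kappa\cdot(\alpha\mydotminus 1)=\kappa\cdot\beta\le\sigma(\beta)\le\sup\sigma''\alpha$ (the case $\alpha=0$ being vacuous).

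There is no real obstacle here; the argument is elementary. The only points that need a little care are using monotonicity of ordinal addition in the correct (left) argument — right-additivity genuinely fails, e.g. $0\dotplus\omega=1\dotplus\omega$ — and noticing that the hypothesis on $\sigma$ is already strong enough to force ordinary monotonicity, which is exactly what makes the limit stage of the induction go through.
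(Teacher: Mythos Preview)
Your proof is correct and follows the same approach as the paper: a transfinite induction establishing a pointwise lower bound on $\sigma(\xi)$, followed by passing to the supremum. Your bound $\kappa\cdot\xi\le\sigma(\xi)$ is in fact slightly sharper than the paper's stated $\kappa\cdot(\xi\mydotminus 1)\le\sigma(\xi)$, and you supply the details (left monotonicity of addition, continuity of multiplication, the successor/limit split for $\alpha$) that the paper leaves implicit.
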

\begin{proof}[Proof of the Claim]
By transfinite induction it is a straightforward to show that 
${\kappa}\cdot ({\xi}\mydotminus 1)\le  {\sigma}({\xi})$ for ${\xi}<{\alpha}$.
\end{proof}

\begin{proof}[Proof of Theorem  \ref{tm:ordl-bounds}](a)
 Write ${\alpha}=\rank(\mc T)$.

Consider a bijection  $f:X\to {\kappa}\cdot {\alpha}$
such that for all ${\zeta}<{\alpha}$ 
\begin{align*}
\{f(x): x\in \bigcup_{\rank_{{\mc T}}(t)={\zeta}} X_t \}= 
[{\kappa}\cdot {\zeta},{\kappa}\cdot {\zeta}\dotplus {\kappa}).
\end{align*}
Then $f^{-1}{\delta}\subs^{closed}X$ for all ${\delta}<{\kappa}\cdot {\alpha}$
 by (S5) because $s\prec t$ implies 
$\rank_{\mc T}(s)<\rank_{\mc T}(t)$
and so $f(x)={\delta}$ implies $U(x)\cap f^{-1}{\delta}=\empt$.
Thus  $\lord{\mc X}\le {\kappa}\cdot {\alpha}$.

\bigskip
\noindent{(b) and (c).}

 Assume that $\lord{\mc X}={\beta}$, and fix a bijection 
$ f:X\to {\beta}$
such that for each ${\beta}'<{\beta}$
$$f^{-1}{\beta}'\subs^{\rm closed} X.$$

Define the function $\rho:\rank(\mc T)\to \bf {On}$ as follows:
\begin{align*}
 \rho({\zeta})=\min\{{\beta}'\le {\beta}:\exists t\in T( \rank_{\mc T}(t)={\zeta}\land (|f^{-1}{\beta}'\cap X_t)|= {\kappa})\}.
\end{align*}
We have  $\cf(\rho({\zeta}))=\cf({\kappa})$ for all ${\zeta}<\rank(\mc T)$.

\begin{claim}\label{cL:rhojumpS6}
If  (S\ref{S6})
holds and 
${\zeta}<{\xi}<rk(T)$, then $\rho({\zeta})<\rho({\xi})$.
\end{claim}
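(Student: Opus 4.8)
The plan is to prove $\rho(\zeta)<\rho(\xi)$ directly: from a witness to the value $\rho(\xi)$ I will build a witness to a strictly smaller value for $\rho(\zeta)$. Write $\gamma=\rho(\xi)$. By the definition of $\rho$, fix $t\in T$ with $\rank_{\mc T}(t)=\xi$ and $|f^{-1}\gamma\cap X_t|=\kappa$, and put $S=f^{-1}\gamma\cap X_t\in\br X_t;\kappa;$. Since the minimum defining $\rho(\xi)$ ranges over ordinals $\le\beta$, we have $\gamma\le\beta$, so every proper initial segment $f^{-1}\gamma'$ with $\gamma'<\gamma$ is closed in $\mc X$; also recall the fact noted just above the claim, $\cf(\gamma)=\cf(\kappa)$, and since $\cf(\kappa)>\omega$ this makes $\gamma$ a limit ordinal of uncountable cofinality.

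The first genuine step is order-theoretic: since $\zeta<\xi=\rank_{\mc T}(t)$, there is $s\prec t$ with $\rank_{\mc T}(s)=\zeta$. This follows by unfolding the recursion defining the rank function — from $\zeta<\rank_{\mc T}(t)$ one gets some $u\prec t$ with $\rank_{\mc T}(u)\ge\zeta$, and if $\rank_{\mc T}(u)>\zeta$ one repeats the argument with $u$ in place of $t$; along such a descending $\prec$-chain the ranks strictly decrease while staying $\ge\zeta$, and a strictly decreasing sequence of ordinals is finite, so the process halts at an $s\prec t$ with $\rank_{\mc T}(s)=\zeta$.

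Now apply (S\ref{S6}) to the pair $s\prec t$ and to $S\in\br X_t;\kappa;$: it produces $R\in\br S;<\cf(\kappa);$ with $|\overline{R}^{\tau}\cap X_s|=\kappa$. Since $R\subs S\subs f^{-1}\gamma$, the set $f''R\subs\gamma$ has size $<\cf(\kappa)=\cf(\gamma)$, hence is bounded in $\gamma$; pick $\gamma'<\gamma$ with $R\subs f^{-1}\gamma'$ (e.g.\ $\gamma'=(\sup f''R)\dotplus1$). As $\gamma'<\gamma\le\beta$, the set $f^{-1}\gamma'$ is closed, so $\overline{R}^{\tau}\subs f^{-1}\gamma'$ and hence $|f^{-1}\gamma'\cap X_s|\ge|\overline{R}^{\tau}\cap X_s|=\kappa$. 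As $\rank_{\mc T}(s)=\zeta$ and $\gamma'<\gamma$, this exhibits $\rho(\zeta)\le\gamma'<\gamma=\rho(\xi)$, as required.

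I expect the only points needing care to be the order-theoretic step — locating an element of rank exactly $\zeta$ below $t$, which is precisely where well-foundedness of $\mc T$ is used — and the small cofinality computation that lets us trap $R$ inside a proper closed initial segment of $f$; everything else is routine bookkeeping with the fixed bijection $f$ and closures of its initial segments, and (S\ref{S6}) is invoked exactly once.
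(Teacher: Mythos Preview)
Your proof is correct and follows essentially the same route as the paper's: fix a witness $t$ for $\rho(\xi)$, descend to some $s\prec t$ of rank $\zeta$, apply (S\ref{S6}) to obtain a small $R\subs S$ whose closure meets $X_s$ in a set of size $\kappa$, and use $\cf(\rho(\xi))=\cf(\kappa)$ to trap $R$ in a proper closed initial segment $f^{-1}\gamma'$, giving $\rho(\zeta)\le\gamma'<\rho(\xi)$. Your write-up is in fact cleaner---the paper's proof contains a couple of notational slips (reusing $T$ for the small set and writing $\overline S$ where the closure of the small set is meant)---and you supply the justification for the existence of $s\prec t$ with $\rank_{\mc T}(s)=\zeta$, which the paper silently assumes.
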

\begin{proof}[Proof of the Claim]
Fix  $t\in T$ such that $\rank(t)={\xi}$ and $$S= (f^{-1}{\rho({\xi})})\cap X_t$$
has cardinality ${\kappa}$.

Pick $s\prec t$ with $\rank(s)={\zeta}$.
By (S\ref{S6}), there is $T\in \br S;<\cf({\kappa});$
such that $\overline {S}\cap X_s$ has cardinality ${\kappa}$.

Since $\cf(\rho({\xi}))=cf({\kappa})$, there is $\eta<\rho({\xi})$
such that $S\subs f^{-1}\eta$.

Since $f^{-1}\eta\subs^{closed}X$, we have $\overline {S}\cap X_s\subs f^{-1}\eta$.

Thus $\rho({\zeta})\le \eta$.
\end{proof}

\begin{claim}\label{cL:rhojumpS7}
If (S7)  holds and 
${\zeta}<{\xi}<rk(T)$, then $\rho({\zeta})<\rho({\xi})$.
\end{claim}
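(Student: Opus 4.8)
The statement to prove is Claim~\ref{cL:rhojumpS7}: under hypothesis (S7), if ${\zeta}<{\xi}<\rank(\mc T)$ then $\rho({\zeta})<\rho({\xi})$.

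The plan is to mimic the proof of Claim~\ref{cL:rhojumpS6}, replacing the ``small set $R$ of size $<\cf({\kappa})$ with closure meeting $X_s$ in size ${\kappa}$'' mechanism by the stationarity mechanism available under (S7). First I would fix $t\in T$ with $\rank_{\mc T}(t)={\xi}$ witnessing $\rho({\xi})$, so that $S=(f^{-1}\rho({\xi}))\cap X_t$ has cardinality ${\kappa}$, and pick $s\prec t$ with $\rank_{\mc T}(s)={\zeta}$. By (S7)(iii), $\overline{S}^{\tau}\cap X_s$ is stationary in ${\kappa}$. The goal is to find ${\beta}'<\rho({\xi})$ with $|f^{-1}{\beta}'\cap X_s|={\kappa}$, which then gives $\rho({\zeta})\le {\beta}'<\rho({\xi})$.

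The key extra ingredient will be to use (S7)(i)--(ii): since $X\subs{\kappa}$ and $\tau$ refines the order topology, the $\tau$-closure of a set $A\subs{\kappa}$ is contained in its closure in the order topology, i.e.\ in $A\cup\{\gamma<{\kappa}: \gamma=\sup(A\cap\gamma)\}$. Now I would split into cases on $\cf(\rho({\xi}))=\cf({\kappa})$ versus the structure of $S$ as a subset of the ordinal ${\kappa}$. Since $f^{-1}\rho({\xi})$ is $\tau$-closed and $\rho({\xi})$ has cofinality $\cf({\kappa})$, write $f^{-1}\rho({\xi})=\bigcup_{\eta<\rho({\xi})}f^{-1}\eta$ as an increasing union of ${\tau}$-closed sets. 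For each limit ordinal $\gamma\in\overline{S}^{\tau}\cap X_s$ with $\gamma=\sup(S\cap\gamma)$, the set $S\cap\gamma$ is cofinal in $\gamma$; I want to locate $\gamma$ inside some $f^{-1}\eta$ with $\eta<\rho({\xi})$. Here I would use a Fodor/pressing-down argument: the map sending such a $\gamma$ to the least $\eta(\gamma)<\rho({\xi})$ with $(S\cap\gamma)\subs f^{-1}\eta(\gamma)$ is well-defined whenever $\cf(\gamma)<\cf(\rho({\xi}))=\cf({\kappa})$ (so the $<\cf({\kappa})$-sized tail of $S$ below $\gamma$ is captured at a bounded stage), and on a stationary subset of $\{\gamma<{\kappa}:\cf(\gamma)<\cf({\kappa})\}\cap\overline{S}^{\tau}\cap X_s$ — which is still stationary, intersecting the stationary set $\overline{S}^{\tau}\cap X_s$ with the club-relative stationary set of points of small cofinality — this $\eta(\gamma)$ is constant, say equal to ${\beta}'$. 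Then $f^{-1}\eta(\gamma)=f^{-1}{\beta}'$ is $\tau$-closed and contains $S\cap\gamma$ cofinally in $\gamma$, hence contains $\gamma$; so stationarily many such $\gamma\in X_s$ lie in $f^{-1}{\beta}'$, giving $|f^{-1}{\beta}'\cap X_s|={\kappa}$ with ${\beta}'<\rho({\xi})$, as desired.

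The main obstacle I anticipate is the case analysis on cofinalities: if $\cf({\kappa})={\kappa}$ is regular, the ``points of small cofinality'' argument is clean, but one must be careful that $\overline{S}^{\tau}\cap X_s$ being stationary genuinely forces many of its limit points (in the ordinal sense) to have cofinality strictly below $\cf(\rho({\xi}))$, so that the bounded tail $S\cap\gamma\subs f^{-1}\eta$ can be found below $\rho({\xi})$; if all relevant $\gamma$ had cofinality exactly $\cf({\kappa})$ one could not pull $\eta$ below $\rho({\xi})$. Since $\{\gamma<{\kappa}:\cf(\gamma)<{\kappa}\}$ contains a club (indeed all successor ordinals and all ordinals of cofinality $<{\kappa}$, which is co-small for regular ${\kappa}$), intersecting with the stationary set $\overline{S}^{\tau}\cap X_s$ keeps it stationary, so the argument goes through; the Fodor application is then the crux and should be written with care. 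Once ${\beta}'$ is pinned down the conclusion $\rho({\zeta})\le{\beta}'<\rho({\xi})$ is immediate from the definition of $\rho$.
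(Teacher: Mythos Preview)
Your pressing-down step does not work. Fodor's lemma requires a \emph{regressive} function, i.e.\ one with $\eta(\gamma)<\gamma$, but your $\eta(\gamma)$ takes values in $\rho(\xi)$, an ordinal that in general has nothing to do with $\gamma$ (and may far exceed $\kappa$). Composing with an increasing cofinal $\kappa$-sequence in $\rho(\xi)$ to pull the values back into $\kappa$ does not help: there is still no reason for the resulting function to satisfy $h(\gamma)<\gamma$. (Your remark about restricting to $\{\gamma:\cf(\gamma)<\kappa\}$ is vacuous here, since $\kappa$ is regular and every $\gamma<\kappa$ already has $\cf(\gamma)<\kappa$.) So as written you only obtain $\rho(\zeta)\le\rho(\xi)$ --- which is indeed easy, since $\overline{S}^\tau\cap X_s$ has size $\kappa$ and sits inside $f^{-1}\rho(\xi)$ --- but not the strict inequality.

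The paper gets the strict inequality by a different mechanism: it argues by contradiction. Assuming $\rho(\zeta)=\rho(\xi)=\delta$ buys you the additional information that $|f^{-1}\eta\cap X_s|<\kappa$ for every $\eta<\delta$ (by minimality of $\rho(\zeta)$), and likewise for $X_t$. This is exactly what lets one build a club $D\subs\kappa$ of ``synchronization points'' $\nu$ at which $f[Y_s\cap\nu]=f[Y_s]\cap\gamma_\nu$ and $f[Y_t\cap\nu]=f[Y_t]\cap\gamma_\nu$ for a fixed cofinal sequence $\langle\gamma_\nu:\nu<\kappa\rangle$ in $\delta$. Picking $\nu\in D\cap Y_s$ (possible since $Y_s$ is stationary) and using, as you correctly observed, that $\nu\in\overline{Y_t\cap\nu}^\tau$, one gets $f(\nu)<\gamma_\nu$ from one synchronization condition and $f(\nu)\ge\gamma_\nu$ from the other --- a contradiction. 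The point is that without the hypothesis $\rho(\zeta)=\delta$ you cannot control $|f^{-1}\eta\cap X_s|$, and without that control there is no club to work with; this is precisely the ingredient your direct approach lacks.
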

\begin{proof}[Proof of the Claim]
Write ${\delta}=\rho({\xi})$ and fix $t\in T$
such that $\rank(t)={\xi}$  and  $$Y_t=f^{-1}{\delta}\cap X_t$$ has cardinality ${\kappa}$.

Pick $s\prec t$ with $rank(s)={\zeta}$.
By (S\ref{S7}),  $$Y_s=\overline {f^{-1}{\delta}\cap X_t}\cap X_s\subs^{stat}  X_s.$$

Since $f^{-1}{\delta}\subs ^{closed}X$, we have   
\begin{equation*}
Y_s\subs \overline{(f^{-1}{\delta}\cap X_t)}^\tau\subs f^{-1}{\delta}.  
\end{equation*}
Thus $\rho({\zeta})\le {\delta}$.

Assume on the contrary that $\rho({\zeta})=\rho({\xi})={\delta}$.

Fix a club subset  $C =\{{\gamma}_{\nu}:{\nu}<{\kappa}\}$  of $\delta$,
and consider the set 
\begin{align*}
 D=\{{\nu}<{\kappa}: f[Y_s\cap {\nu}]= f[Y_s]\cap {\gamma}_{\nu} \land
 f[Y_t\cap {\nu}]= f[Y_t]\cap {\gamma}_{\nu}\}.
\end{align*}
$D$ is club because the sets $f^{-1}\eta\cap Y_s$ and 
 $f^{-1}\eta\cap Y_t$ have cardinalities $<{\kappa}$ for each $\eta<{\delta}$.
Since $Y_s$ is stationary, there is ${\nu}\in D\cap Y_s$.

Then ${\nu}\in \overline{Y_t\cap {\nu}}$ because 
the topology $\tau$ refines the order topology on ${\kappa}$.

But $Y_t\cap {\nu}\subs f^{-1}{\gamma}_{\nu}\subs ^{closed} X$
because $f[Y_t\cap {\nu}]= f[Y_t]\cap {\gamma}_{\nu}$,
and ${\nu}\notin f^{-1}{\gamma}_{\nu}$ 
because $f[Y_s\cap {\nu}]= f[Y_s]\cap {\gamma}_{\nu}$.
Thus $f^{-1}{\gamma}_{\nu}$ is not closed in $X$.
Contradiction,  $\rho({\xi})=\rho({\zeta})$ is not possible.
So the claim holds.
\end{proof}
By Claims \ref{cL:rhojumpS6} and \ref{cL:rhojumpS7}
we have $\rho({\zeta})<\rho({\xi})$ for ${\zeta}<{\xi}<{\alpha}=\rank(\mc T)$.
Since $\cf(\rho({\xi})={\kappa}$, it follows that 
$\rho({\zeta})\dotplus {\kappa}\le \rho({\xi})$.

Thus we can apply Lemma \ref{cl:rhorange} to derive that 
$$
{\kappa}\cdot (\rank(\mc T)\mydotminus 1)=
{\kappa}\cdot ({\alpha}\mydotminus 1)\le \sup \rho''{\alpha}\le {\beta}=\lord{\mc X},$$
which was to be proved. 
\end{proof}

%
%

%
\begin{proof}[Proof of Theorem \ref{tm:lord1}]
Let $\{X_{\zeta}:{\zeta}<{\alpha}\dotplus1\}$ be a family of pairwise disjoint 
stationary subsets of $E^{\omega}_{\kappa}=\{{\nu}<{\kappa}: \cf({\nu})={\omega}\}$. 
%

Write
\begin{align*}
X_{>{\xi}}=\bigcup \{X_{\zeta}:{\zeta}>{\xi}\}.
\end{align*}

We define 
the topology  $\tau$ on $X$ as follows.

For ${\xi}<{\alpha}\dotplus1$ and ${\gamma}\in X_{\xi}$, for all $\beta<{\gamma}$
let 
\begin{equation*}
B_{{\gamma}}(\beta)=\{{\gamma}\}\cup \big((\beta,{\gamma})\cap X_{>{\xi}}\big). 
\end{equation*}
The neighborhood base of ${\gamma}$ in $X$ will be
\begin{equation*}
 \{B_{\gamma}(\beta):\beta<{\gamma}\}.
\end{equation*}

The topology $\tau$  is finer than the usual order topology 
of $\kappa$, so $\<X,\tau\>$ is a scattered $T_2$ space.
Since $cf({\nu})=\omega$ for all ${\nu}\in X$, the space is first countable.

We are to show that ${\kappa}$, $T={\alpha}\dotplus1$, $\{X_{\zeta}:{\zeta}<{\alpha}\dotplus1\}$ and 
$\<X,\tau\>$
satisfy (S1)--(S5) and (S7) from Theorem \ref{tm:ordl-bounds}.

(S1)--(S5) are clear from the construction.

To check (S7), assume that ${\zeta}<{\xi}<{\alpha}\dotplus1$ and $S\in \br X_{\xi};{\kappa};$.
Then 
\begin{equation*}
 \overline{S}\cap X_{{\zeta}}=\overline{S}^\tau\cap X_{\zeta}
\end{equation*}
by the definition of $\tau$, where $\overline{S}$
denotes the closure of $S$ in the order topology. So $\overline{S}^\tau\cap X_{\zeta}$
is stationary because 
$X_{\zeta}\subs^{stat} {\kappa} $ and 
$\overline{S}\subs^{\rm club}{\kappa}$. 

So we can apply Theorem \ref{tm:ordl-bounds}(a) and (c)  to obtain 
\begin{align*}
 {\kappa}\cdot {\alpha}={\kappa}\cdot ( ({\alpha}\dotplus 1))\mydotminus1)\le
 \lord{\mc X}\le  {\kappa}\cdot ( ({\alpha}\dotplus 1))
\end{align*}
Then, applying  Lemma \ref {lm:appr-exact}
we obtain a closed subspace $Y\subs^{closed}X$
such that $\lord{Y}={\kappa}\cdot {\alpha}$.
\end{proof}

\newcommand{\hfd}[1]{$#1$-HFD${}_w$}

\begin{definition}
A subspace $W=\{w_{\nu}:{\nu}<{\kappa}\}\subs 2^{\kappa}$ is a
{\em \hfd{{\kappa}}} iff 
\begin{align*}
 \forall A\in \br W;{\kappa};\ \exists B\in\br  A;<{\kappa};\ &\exists {\gamma}<{\kappa}\\
 &\{w\restriction {\kappa}\setm {\gamma}:w\in B\}\subs^{dense}2^{{\kappa}\setm {\gamma}}.
\end{align*}
\end{definition}

\begin{theorem}\label{tm:leftfromheredsep}
Assume that ${\kappa}>{\omega}$ is a regular cardinal, and there is an \hfd{{\kappa}}
space $W=\{w_{\nu}:{\nu}<{\kappa}\}\subs 2^{\kappa}$.

Then for each  ordinal ${\alpha}<{\kappa}^+$
there is a  space $Y\subs 2^{\kappa}$ such that 
$\lord{Y}= {\kappa}\cdot {\alpha}$.
\end{theorem}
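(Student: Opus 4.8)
The plan is to deduce the theorem from Theorem~\ref{tm:ordl-bounds} and Lemma~\ref{lm:appr-exact}, along the same lines as the proof of Theorem~\ref{tm:lord1}, except that the order topology on $\kappa$ and the disjoint stationary subsets of $\kappa$ used there are to be replaced by a $0$-dimensional topology and a partition manufactured out of the $\hfd{{\kappa}}$ space $W$.

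\emph{Reduction.} Put $\lambda=\alpha\dotplus1$ and let $\mc T=\langle\lambda,\le\rangle$; this is a well-order, hence a well-founded poset, with $\rank_{\mc T}(\eta)=\eta$ for every $\eta<\lambda$, so $\rank(\mc T)=\lambda$. Since $\lambda$ is a successor ordinal, $\rank(\mc T)\mydotminus1=\alpha$. It therefore suffices to construct a space $X$ with $|X|=\kappa$, embeddable as a subspace of $2^\kappa$, together with a partition $\{X_\eta:\eta<\lambda\}$ of $X$ into pieces of size $\kappa$, so that $(\kappa,\mc T,X,\{X_\eta:\eta<\lambda\})$ satisfies conditions (S1)--(S5) and (S6) of Theorem~\ref{tm:ordl-bounds}: for then parts (a) and (b) of that theorem give
\begin{equation*}
\kappa\cdot\alpha=\kappa\cdot(\rank(\mc T)\mydotminus1)\le\lord{X}\le\kappa\cdot\rank(\mc T)=\kappa\cdot\lambda ,
\end{equation*}
and Lemma~\ref{lm:appr-exact} yields a closed subspace $Y\subs^{closed}X$ with $\lord{Y}=\kappa\cdot\alpha$; as $Y\subseteq X\subseteq 2^\kappa$ we are done.

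\emph{The space.} I would work in $2^{\kappa\times\lambda}$, which is homeomorphic to $2^\kappa$ because $|\kappa\times\lambda|=\kappa$, viewing its index set as a disjoint union of blocks $I_\eta$ $(\eta<\lambda)$, each a copy of $\kappa$; for $z\in 2^{\kappa\times\lambda}$ write $z{\restriction}\eta\in 2^\kappa$ for the block-$I_\eta$ part of $z$. For $\xi<\lambda$ and $\nu<\kappa$ let $x_{\xi,\nu}$ be the point whose block-$I_\eta$ value is a (reindexed) copy of $w_\nu$ when $\eta<\xi$, a separating value $s_\nu$ depending only on $\nu$ when $\eta=\xi$, and $0$ when $\eta>\xi$; set $X_\eta=\{x_{\eta,\nu}:\nu<\kappa\}$ and $X=\bigcup_{\eta<\lambda}X_\eta$ with the subspace topology. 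Conditions (S1)--(S4) are immediate. For (S5), a basic clopen set fixing finitely many coordinates in block $I_\xi$ on which $s_\nu$ does not vanish separates $x_{\xi,\nu}$ from the other points of $X_\xi$ (because the $s_\mu$ are chosen to form a relatively discrete family) and from $\bigcup_{\eta<\xi}X_\eta$ (whose points are identically $0$ on $I_\xi$), while it is allowed to meet $\bigcup_{\xi<\eta<\lambda}X_\eta$; hence $\lord X\le\kappa\cdot\lambda$.

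\emph{The crux: (S6).} Fix $\zeta<\xi<\lambda$ and $S=\{x_{\xi,\nu}:\nu\in N\}\in[X_\xi]^\kappa$ (shrinking $N$, we may assume $\nu\mapsto w_\nu$ is injective on $N$). Applying the $\hfd{{\kappa}}$ property to $\{w_\nu:\nu\in N\}$ gives $N'\in[N]^{<\kappa}$ and $\gamma<\kappa$ with $\{w_\nu{\restriction}\kappa\setminus\gamma:\nu\in N'\}$ dense in $2^{\kappa\setminus\gamma}$; put $R=\{x_{\xi,\nu}:\nu\in N'\}\in[S]^{<\kappa}$. Since the points of $R$ all vanish on the blocks above $I_\xi$, $\overline R$ is the product of these fixed coordinates with the closure of the diagonal-type set $\{(w_\nu,\dots,w_\nu,s_\nu):\nu\in N'\}$ on the blocks $I_\eta$ $(\eta\le\xi)$, and the accumulation of $R$ onto $X_\zeta$ must be produced on the block $I_\zeta$, where $R$ carries $w$'s and $X_\zeta$ carries its separating values. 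The point where real work is needed is to arrange that the \emph{other} blocks — those $I_\eta$ with $\eta<\zeta$ and those with $\zeta<\eta\le\xi$ — do not spoil this: because a single index $\nu$ is shared across all the block-copies of $W$ carried by $x_{\xi,\nu}$ those copies are linked, and because the density delivered by the $\hfd{{\kappa}}$ property is only density on a tail $\kappa\setminus\gamma$ with $\gamma$ depending on $S$, one cannot simply prescribe the values of the caught points of $X_\zeta$ on the initial segments. Resolving this — i.e.\ choosing the separating values $s_\nu$ and the way $W$ is reindexed into the various blocks so that, whatever $\gamma$ turns out to be, still $\kappa$-many points of $X_\zeta$ land in $\overline R$ — is what I expect to be the main obstacle; the reduction, (S1)--(S5), and the remaining bookkeeping are routine.
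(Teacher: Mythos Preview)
Your reduction via Theorem~\ref{tm:ordl-bounds} and Lemma~\ref{lm:appr-exact} is exactly the paper's, and (S1)--(S5) go through. The obstacle you isolate for (S6), however, is genuine and your block construction does not overcome it. If $\zeta+1<\xi$, pick an intermediate block $I_\eta$ with $\zeta<\eta<\xi$ and any coordinate $(c,\eta)$ there. Every $x_{\zeta,\mu}$ vanishes at $(c,\eta)$, so the basic open set fixing that single coordinate to $0$ contains all of $X_\zeta$; but $x_{\xi,\nu}$ carries $w_\nu$ on $I_\eta$, so meeting this set forces $w_\nu=0$ at the reindexed coordinate $c'$. The \hfd{{\kappa}} witness $(N',\gamma)$ says nothing about coordinates $c'<\gamma$, and nothing prevents $w_\nu(c')=1$ for every $\nu\in N'$ at some fixed $c'<\gamma$; in that case $\overline R\cap X_\zeta=\emptyset$. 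No choice of reindexing repairs this, because $\gamma$ is produced only after $S$ is given. The linkage you flagged is fatal, not merely awkward.

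The paper abandons the block structure. It partitions $\kappa$ itself into $\{K_s:s<\alpha\dotplus1\}\subs\br\kappa;\kappa;$, indexes points by single ordinals $\nu<\kappa$ with $X_s=\{x_\nu:\nu\in K_s\}$, and for $\nu\in K_s$, $\mu\in K_t$ sets $x_\nu(\mu)=1$ if $\nu=\mu$; $x_\nu(\mu)=0$ if $\nu>\mu$ or if $\nu\ne\mu$ and $s\le t$; and $x_\nu(\mu)=w_\nu(\mu)$ in the remaining case $\nu<\mu$, $s>t$. Condition (S5) is witnessed by $U(x_\mu)=\{x_\nu:x_\nu(\mu)=1\}$. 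For (S6) the paper argues by contradiction: if it fails for $s<t$ and $S\subs X_t$, choose for each $\xi<\kappa$ a point $x_{\mu_\xi}\in X_s$ with $\mu_\xi\ge\xi$ separated from $\{x_\nu\in S:\nu<\xi\}$ by some $\varepsilon_\xi$, thin to a $\Delta$-system with kernel $\varepsilon$, and observe that the clause ``$x_\nu(\mu)=0$ for $\nu>\mu$'' forces $\varepsilon\equiv0$. Now apply the \hfd{{\kappa}} property only to indices $\nu>\max\dom(\varepsilon)$ in $S$, obtain $I$ and $\gamma$, and pick $\xi$ with $\dom(\varepsilon_\xi\setminus\varepsilon)$ above both $\gamma$ and $\sup I$: tail density then yields $\nu\in I$ with $x_\nu\in[\varepsilon_\xi]$, a contradiction. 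Interleaving the levels inside one copy of $\kappa$ and using the ordinal order to make every $x_\nu$ vanish on initial segments is exactly what lets the $\Delta$-system push all live coordinates beyond $\gamma$ --- the idea your scheme is missing.
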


\begin{proof}[Proof of Theorem \ref{tm:lord2} from Theorem \ref{tm:leftfromheredsep}]
 Rinot \cite[Main Result]{rinot}  proved that 
if ${\kappa}>{\omega}_1$ is  a regular cardinal, and there is 
a non-reflecting stationary subset $S\subs {\kappa}$, then 
$Pr_1({\kappa},{\kappa},{\kappa},{\omega})$ holds
(see \cite{rinot} for the definition of $Pr_1({\kappa},{\kappa},{\kappa},{\omega})$).

Fix a   function 
$f:{\kappa}\times {\kappa}\to {\kappa}$ witnessing   
$Pr_1({\kappa},{\kappa},{\kappa},{\omega})$, and 
define $W=\{w_{\nu}:{\nu}<{\kappa}\}\subs 2^{\kappa}$
as follows:
\begin{align*}
w_{\nu}({\mu})=1 \text{ iff } f({\nu},{\mu})=1. 
\end{align*}
Then $W$ is a \hfd{{\kappa}} space.
So we can apply Theorem \ref{tm:leftfromheredsep}.
  \end{proof}

\begin{proof}[Proof of Theorem \ref{tm:leftfromheredsep}]
Fix an enumeration $\{w_{\nu}:{\nu}<{\kappa}\}$ of $W$. 

Let $\{K_s:s<{\alpha}\dotplus 1\}\subs \br {\kappa};{\kappa};$ be a partition of 
${\kappa}$.

Define $X=\{x_{\nu}:{\nu}<{\kappa}\}\subs 2^{\kappa}$ as follows:
if ${\nu}\in K_s$ and ${\mu}\in K_t$, then
\begin{align*}
 x_{\nu}({\mu})=\left \{
 \begin{array}{ll}\tag{$\dag$}
 1&\text{if ${\nu}={\mu},$}\\\\
 0&\text{if ${\nu}>{\mu},$}\\\\
 0&\text{if ${\nu}\ne {\mu}$ and $s\le t$}\\\\
 w_{\nu}({\mu})&\text{if ${\nu}<{\mu}$ and   $s>t$.}\\
 \end{array}
\right .
\end{align*}
Let $X_s=\{x_{\nu}:{\nu}\in K_s\}$ for $s<{\alpha}\dotplus1$.

We are to show that ${\kappa}$, $T={\alpha}\dotplus1$, $\{X_{\zeta}:{\zeta}<{\alpha}\dotplus1\}$ 
and  $X$
satisfy (S1)--(S6) from Theorem \ref{tm:ordl-bounds}.

(S1)--(S4) are clear. For $x_{\mu}\in X$
the open set $$U(x_{\mu})=\{x_{\nu}: x_{\nu}({\mu})=1\}$$
witnesses property (S5) by ($\dag$).

Next we check (S6).

\begin{claim}
If  $s< t\in T={\alpha}\dotplus 1$ and $S\in \br X_t;{\kappa};$,
then there is $R\in \br S;<{\kappa};$ such that 
$|\overline{R}\cap X_s|={\kappa}$.
\end{claim}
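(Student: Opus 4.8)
The plan is to write $S=\{x_{\nu}:{\nu}\in A\}$ for the relevant $A\in\br {K_t};{\kappa};$, to take $R=\{x_{\nu}:{\nu}\in A_0\}$ for a carefully chosen $A_0\in\br A;<{\kappa};$, and to prove that $x_{\eta}\in\overline R$ for every ${\eta}\in K_s$ with ${\eta}>\sup A_0$. Since $\sup A_0<{\kappa}$ and $|K_s|={\kappa}$, there are ${\kappa}$ many such ${\eta}$, and the $x_{\eta}$ are pairwise distinct (they differ on the coordinate ${\eta}$), so this gives $|\overline R\cap X_s|={\kappa}$. The heuristic for the choice of witnesses is that, by ($\dag$), a point $x_{\nu}$ with ${\nu}\in A\subs K_t$ agrees with $w_{\nu}$ exactly on the coordinates ${\mu}>{\nu}$ lying in blocks $K_r$ with $r<t$, and is $0$ or $1$ on all other coordinates; so, to approximate $x_{\eta}$ (${\eta}\in K_s$, $s<t$) on a finite set $F$, one must hit the coordinate ${\eta}$ with value $1$ — forcing a witness ${\nu}<{\eta}$ with $w_{\nu}({\eta})=1$ — and hit the other coordinates of $F$ with the values ($\dag$) prescribes for $x_{\nu}$.

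The step I expect to be the main obstacle is that the \hfd{{\kappa}} hypothesis only yields density of $\{w_{\nu}\restriction({\kappa}\setm{\gamma}):{\nu}\in A_0\}$ in $2^{{\kappa}\setm{\gamma}}$ for \emph{one} ordinal ${\gamma}$, whereas membership in $\overline R$ is tested against arbitrary finite sets of coordinates, including coordinates below ${\gamma}$, on which ($\dag$) gives no control over $w_{\nu}$. To get around this I would construct $A_0$ by iterating the hypothesis ${\omega}$ times: set ${\delta}_0=0$, and given ${\delta}_n$, apply the hypothesis to $\{w_{\nu}:{\nu}\in A\setm{\delta}_n\}$ to obtain $I_n\in\br {A\setm{\delta}_n};<{\kappa};$ and ${\gamma}_n<{\kappa}$ with $\{w_{\nu}\restriction({\kappa}\setm{\gamma}_n):{\nu}\in I_n\}$ dense in $2^{{\kappa}\setm{\gamma}_n}$, then put ${\delta}_{n+1}=\max({\gamma}_n,\sup I_n)+1$. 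Because $\cf({\kappa})>{\omega}$, the set $A_0=\bigcup_n I_n$ lies in $\br A;<{\kappa};$ and ${\gamma}^*=\sup_n{\gamma}_n<{\kappa}$; moreover ${\gamma}^*\le\sup A_0$ (each $\min I_{n+1}$ exceeds ${\gamma}_n$), and for every ${\beta}<\sup A_0$ the tail $\{w_{\nu}\restriction({\kappa}\setm{\gamma}^*):{\nu}\in A_0\text{ and }{\nu}>{\beta}\}$ is still dense in $2^{{\kappa}\setm{\gamma}^*}$: if ${\delta}_{n+1}>{\beta}$ then $I_{n+1}\subs({\beta},{\kappa})$, and the restriction of the dense set $\{w_{\nu}\restriction({\kappa}\setm{\gamma}_{n+1}):{\nu}\in I_{n+1}\}$ to the fewer coordinates ${\kappa}\setm{\gamma}^*$ remains dense. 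A routine strengthening gives that each such finite condition is satisfied by infinitely many ${\nu}\in A_0$ with ${\nu}>{\beta}$, so finitely many values of ${\nu}$ may always be avoided.

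Finally, with $A_0$ and ${\gamma}^*$ fixed, I would pick any ${\eta}\in K_s$ with ${\eta}>\sup A_0$ (hence ${\eta}>{\gamma}^*$) and any finite $F\subs{\kappa}$, and look for ${\nu}\in A_0$ with $x_{\nu}\restriction F=x_{\eta}\restriction F$. Let ${\beta}=\max(F\cap{\gamma}^*)$ (and ${\beta}=0$ if $F\cap{\gamma}^*=\empt$), and let $q$ be the finite partial function into $\{0,1\}$ with domain $\{{\mu}\in F:{\mu}\ge{\gamma}^*\text{ and }{\mu}\in K_r\text{ for some }r<t\}$ read off from $x_{\eta}$ through ($\dag$), namely $q({\eta})=1$ if ${\eta}\in F$, $q({\mu})=w_{\eta}({\mu})$ for ${\mu}\in\dom q$ with ${\mu}>{\eta}$ and ${\mu}\in K_r$ with $r<s$, and $q({\mu})=0$ for all other ${\mu}\in\dom q$. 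Using the tail density from the previous step, choose ${\nu}\in A_0$ with ${\nu}>{\beta}$, ${\nu}\notin F$ and $w_{\nu}\restriction\dom q=q$. A coordinatewise check with ($\dag$), $s<t$ and ${\nu}\le\sup A_0<{\eta}$ then shows $x_{\nu}({\mu})=x_{\eta}({\mu})$ for all ${\mu}\in F$: coordinates ${\mu}$ in blocks $K_r$ with $r\ge t$ give $x_{\nu}({\mu})=x_{\eta}({\mu})=0$ at once; coordinates ${\mu}\ge{\gamma}^*$ in blocks $K_r$ with $r<t$ give $x_{\nu}({\mu})=q({\mu})=x_{\eta}({\mu})$ (this is where the fourth clause of ($\dag$) and the coordinate ${\mu}={\eta}$ with $w_{\nu}({\eta})=1$ come in); and the delicate coordinates ${\mu}<{\gamma}^*$ — where ($\dag$) says nothing about $w_{\nu}({\mu})$ — are handled by the inequalities ${\nu}>{\beta}\ge{\mu}$ and ${\mu}<{\gamma}^*\le\sup A_0<{\eta}$, so that the clause ``$x_{\nu}({\mu})=0$ if ${\nu}>{\mu}$'' of ($\dag$) gives $x_{\nu}({\mu})=0=x_{\eta}({\mu})$. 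Thus $x_{\eta}\in\overline R$, whence $|\overline R\cap X_s|={\kappa}$; this proves the Claim and thereby completes the verification of (S6).
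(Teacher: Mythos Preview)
Your argument is correct and takes a genuinely different route from the paper's.

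The paper argues by contradiction: assuming every bounded piece $\{x_{\nu}\in S:\nu<\xi\}$ has closure meeting $X_s$ in fewer than ${\kappa}$ points, it picks for each $\xi$ a separating basic open set $[\varepsilon_\xi]$ containing some $x_{\mu_\xi}\in X_s$ with $\mu_\xi\ge\xi$, thins the $\varepsilon_\xi$'s to a $\Delta$-system with kernel $\varepsilon$, applies the \hfd{{\kappa}} property \emph{once} to $\{w_\nu:\nu\in J\}$ (where $J$ indexes $S$ above $\max\dom\varepsilon$), and then exhibits a single $\xi$ and $\nu\in I$ with $x_\nu\in[\varepsilon_\xi]$, contradicting the choice of $\varepsilon_\xi$.

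Your approach is direct: you iterate the \hfd{{\kappa}} hypothesis ${\omega}$ times to build $A_0=\bigcup_n I_n$ with the $I_n$'s stacked above each other, so that every tail of $A_0$ restricts to a dense set on $2^{{\kappa}\setm{\gamma}^*}$; then you verify by a coordinate-by-coordinate case split that \emph{every} $x_\eta$ with $\eta\in K_s$, $\eta>\sup A_0$ lies in $\overline{R}$. No $\Delta$-system is used, and the witness $R$ is explicit. The price is the ${\omega}$-fold iteration and a slightly longer case analysis; the gain is that the argument is constructive and in fact proves the stronger statement that $X_s\setm\overline{R}$ has size $<{\kappa}$. Both methods rely on $\cf({\kappa})>{\omega}$ (yours to bound $\sup A_0$ and $\gamma^*$, the paper's to run the $\Delta$-system lemma).
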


\begin{proof}[Proof of the Claim]
Assume on the contrary that for each ${\xi}<{\kappa}$ we have 
$$|\overline{\{x_{\nu}\in S :  {\nu}<{\xi} \}}\cap X_s|<{\kappa},$$
and so 
we can pick $\varepsilon_{\xi}\in Fn({\kappa},2)$
and  ${\mu}_{\xi}\ge{\xi}$ and 
\begin{align*}\tag{$\ddag$}
 x_{{\mu}_{\xi}}\in [\varepsilon_{\xi}]\cap X_s
 \ \land\ [\varepsilon_{\xi}]\cap  \{x_{\nu}\in S :  {\nu}<{\xi} \}=\empt
\end{align*}
By thinning out the sequences we can assume that $\{\varepsilon_{\xi}:{\xi}<{\kappa}\}$
form a $\Delta$-system with kernel $\varepsilon$.

Pick ${\zeta}$ such that 
${\mu}_{\zeta}>\max \dom(\varepsilon)$.
Then $x_{{\mu}_{\zeta}}({\nu})=0$ for ${\nu}\in \dom (\varepsilon)$ by ($\dag$).
Thus $x_{{\mu}_{\zeta}}\in [\varepsilon_{\zeta}]$ implies that 
\begin{align*}\tag{$*$}
\text{$\varepsilon({\nu})=0$
for each ${\nu}\in \dom (\varepsilon)$.} 
\end{align*}

Let $J=\{{\nu}: x_{\nu}\in S\land {\nu}>\max \dom(\varepsilon)\}$.
Since $W$ is a \hfd{{\kappa}}, one can find $I\in  \br J;<{\kappa}; $
and ${\gamma}<{\kappa}$ such that 
\begin{align*}\tag{$\star$}
\{w_{\nu}\restriction {\kappa}\setm {\gamma}:{\nu}\in I\}
\subs^{dense}2^{{\kappa}\setm {\gamma}}
 \end{align*}

Pick ${\xi}<{\kappa}$ such that 
\begin{enumerate}[(1)]
\item $\{w_{\nu}:{\nu}\in I\}\subs \{w_{\nu}\in S: {\nu}<{\xi}\}$, and 
 \item $\dom(\varepsilon_{\xi}\setm \varepsilon)\subs {\kappa}\setm {\gamma}$, and
 \item $\sup I<\min(\dom(\varepsilon_{\xi}\setm \varepsilon))$.
\end{enumerate}
By ($\star$) and (2) there is ${\nu}\in I$ such that 
\begin{align*}\tag{$\circ$}
\varepsilon_{\xi}\setm \varepsilon\subs w_{\nu}.
\end{align*}

Since ${\nu}>\max \dom(\varepsilon)$, $x_{\nu}({\mu})=0$
for all ${\mu}\in \dom(\varepsilon)$ by ($\dag$). So, by ($*$), 
\begin{align*}\tag{$\bullet$}
\varepsilon\subs x_{\nu}. 
\end{align*}

Assume that ${\mu}\in \dom(\varepsilon_{\xi}\setm \varepsilon)$.
Then ${\mu}\in X_r$ for some $r<{\alpha}\dotplus1$. 

If $r<t$,  then  ${\nu}<{\mu}$ by (3), and so 
\begin{align*}\tag{$\vartriangle$}
x_{\nu}({\mu})=w_{\nu}({\mu})=\varepsilon_{\xi}({\mu}). 
\end{align*}
by ($\dag$) and $(\circ)$.

If $t\le r$, then $s<r$.  So $x_{\nu}({\mu})=0$ 
by ($\dag$),
and 
$\varepsilon_{\xi}({\mu})=x_{{\mu}_{\xi}}({\mu})=0$
by ($\ddag$) and $(\dag)$.
So
\begin{align*}\tag{$\blacktriangle$}
x_{\nu}({\mu})=\varepsilon_{\xi}({\mu}). 
\end{align*}
$(\circ)$, ($\vartriangle$) and ($\blacktriangle$) give 
$x_{\nu}\in [\varepsilon_{\xi}]$ which contradicts $(\ddag)$ and (1).
We proved the claim.
\end{proof}

So we can apply Theorem \ref{tm:ordl-bounds}(a) and (b)  to obtain 
\begin{align*}
 {\kappa}\cdot {\alpha}={\kappa}\cdot ( ({\alpha}\dotplus 1))\mydotminus1)\le
 \lord{\mc X}\le  {\kappa}\cdot ( ({\alpha}\dotplus 1))
\end{align*}
Then, applying  Lemma \ref {lm:appr-exact}
we obtain a closed subspace $Y\subs^{closed}X$
such that $\lord{Y}={\kappa}\cdot {\alpha}$.
\end{proof}

To prove Theorem  \ref{tm:beth} we need to recall some notions and notations from \cite{En}.
The discrete topological space on a set $A$ will be denoted by $D(A)$. 
The {\em Baire space of weight ${\lambda}$, $\baire{{\lambda}}$}, is 
the metric space $D({\lambda})^{\omega}$, see \cite[Example 4.2.12]{En}.
The metric topology of $\baire{{\lambda}}$ will be denoted by $\rho_{\lambda}$.

The following claim is well-known:
\begin{claim}\label{cl:baire-closure}
If ${\lambda}$ is a strong limit cardinal with $\cf({\lambda})={\omega}$
and  $C\subs  B({\lambda})$ has cardinality ${\lambda}^+$, then there is 
$D\in \br C;{\lambda};$ such that $|\overline{D}^{\rho_{\lambda}}|={\lambda}^{\omega}$.
\end{claim}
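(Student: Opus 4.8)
The plan is to prove Claim~\ref{cl:baire-closure} by a pigeonhole argument on finite initial segments, exploiting that $\cf({\lambda})={\omega}$ and that ${\lambda}$ is a strong limit. Fix $C\subs \baire{{\lambda}}$ with $|C|={\lambda}^+$. Choose an increasing sequence ${\lambda}_0<{\lambda}_1<\dots$ of cardinals cofinal in ${\lambda}$ with ${\lambda}=\sup_n{\lambda}_n$. The number of functions $t:n\to {\lambda}_k$ is ${\lambda}_k^{|n|}<{\lambda}$ for each fixed $n,k<{\omega}$, since ${\lambda}$ is a limit cardinal; hence the set of all finite sequences with values below some ${\lambda}_k$ has cardinality ${\lambda}$ (it is a countable union of sets of size $<{\lambda}$, each term bounded by some ${\lambda}_j$, so the union has size ${\lambda}$). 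For $p\in C$ and $n<{\omega}$, the restriction $p\restriction n$ need not have bounded values, so I would first pass to a subfamily on which the values are controlled.

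First I would thin $C$ as follows. For each $p\in C$ and each $n<{\omega}$, there is some $k(p,n)<{\omega}$ with $\ran(p\restriction n)\subs {\lambda}_{k(p,n)}$. Since there are only ${\omega}^{\omega}\cdot{\omega}={\omega}$-many... rather, the map $p\mapsto (k(p,n))_{n<{\omega}}$ takes values in ${}^{\omega}{\omega}$, which has size $2^{\omega}\le{\lambda}$; so there is $C'\in\br C;{\lambda}^+;$ and a single function $n\mapsto k_n$ such that $\ran(p\restriction n)\subs{\lambda}_{k_n}$ for all $p\in C'$ and all $n$. Now for each $n$, the set $\{p\restriction n: p\in C'\}$ has cardinality $\le {\lambda}_{k_n}^{|n|}<{\lambda}$. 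Build a decreasing chain $C'=C_0\sups C_1\sups\dots$ with $|C_n|={\lambda}^+$ so that all members of $C_n$ agree on the first $n$ coordinates: at stage $n$, partition $C_n$ according to the value of the $n$-th coordinate — since ${\lambda}^+$ is regular and there are $<{\lambda}<{\lambda}^+$ classes, one class, call it $C_{n+1}$, has size ${\lambda}^+$. Let $q=\bigcup_n(\text{common value of }p\restriction (n+1)\text{ for }p\in C_{n+1})\in\baire{{\lambda}}$; then $q$ is a limit of points of $C'$, and more importantly the whole chain construction shows $C'$ has a point in its closure, but that alone gives only one point, not ${\lambda}^{\omega}$-many.

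To get $|\overline{D}^{\rho_{\lambda}}|={\lambda}^{\omega}$ I would instead do a tree-of-splittings argument. Using the thinned family $C'$, recursively build a family $\{C_\sigma:\sigma\in{}^{<{\omega}}2\}$ of subsets of $C'$, each of size ${\lambda}^+$, with $C_{\emptyset}=C'$, $C_{\sigma\frown i}\subs C_\sigma$, and such that there is a level $n_\sigma$ where $C_{\sigma\frown 0}$ and $C_{\sigma\frown 1}$ take different (constant) values on coordinate $n_\sigma$; this is possible because within any $C_\sigma$ of size ${\lambda}^+$ there must be a coordinate on which at least two values each occur ${\lambda}^+$ times (otherwise, if on every coordinate all but $<{\lambda}$ of the values... one argues there must be genuine splitting somewhere, else $C_\sigma$ would be contained in a single branch, contradicting $|C_\sigma|={\lambda}^+\ge 2$; actually one needs ${\lambda}^+$-branching could collapse, so more care: pick coordinates where the "majority value" has size $<{\lambda}^+$-complement — since $C_\sigma$ has more than one element there is a first coordinate of disagreement between two members, and by regularity of ${\lambda}^+$ one can split along it). Having built the tree, for each $b\in{}^{\omega}2$ the sets $C_{b\restriction n}$ have the finite-intersection-type property giving a point $x_b\in\overline{C'}^{\rho_{\lambda}}$, and distinct branches $b\ne b'$ yield $x_b\ne x_{b'}$ because they disagree on some coordinate $n_{\sigma}$. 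Finally set $D$ to be the countably many "witness" values chosen, or rather any $D\in\br C';{\lambda};$ containing enough points of each $C_\sigma$; since there are only $2^{\omega}\le{\lambda}$ branches and each $x_b$ is already in $\overline{C'}$, and one can choose $D\subs C'$ of size ${\lambda}$ with each $x_b\in\overline{D}$, we get $|\overline{D}^{\rho_{\lambda}}|\ge 2^{\omega}$; to reach ${\lambda}^{\omega}$ note ${\lambda}^{\omega}=(\sup_n{\lambda}_n)^{\omega}\le\prod_n{\lambda}_n^+\le{\lambda}^{\omega}$ and the branching can be made ${\lambda}_{k_n}$-fold rather than binary at level $n$, so that ${}^{\omega}{\prod}_n{\lambda}_{k_n}$-many branches arise, which has size ${\lambda}^{\omega}$.

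The main obstacle I anticipate is the splitting step: verifying that inside any subfamily of size ${\lambda}^+$ one can always find a coordinate admitting the desired wide branching with each branch retaining size ${\lambda}^+$ (or at least size ${\lambda}$, which suffices if one is more careful about the bookkeeping). The clean way is: since the first-disagreement coordinate partitions pairs, and ${\lambda}^+$ is regular while the tree of restrictions at each level has size $<{\lambda}$, a fusion/pressing-down style argument locates a coordinate $m$ and a set $A\in\br C_\sigma;{\lambda}^+;$ all of whose members have pairwise distinct... no — one finds $m$ such that $\{p(m):p\in C_\sigma\}$ has $\ge{\lambda}_j$ values each realized ${\lambda}^+$ times for a prescribed $j$; this is where the regularity of ${\lambda}^+$ and $\cf({\lambda})={\omega}$ combine, and it is essentially the standard construction of a "${\lambda}$-Lusin-type" or free set. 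Once that is in hand the rest is routine bookkeeping, and Claim~\ref{cl:baire-closure} follows.
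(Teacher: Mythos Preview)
The paper does not prove this claim; it is simply labeled ``well-known''. Your tree-of-splittings idea is the standard route and is correct in outline, but you leave the decisive step as an acknowledged obstacle rather than a proof, and two details need repair.

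The missing splitting lemma is this: for $A\in\br{\baire{{\lambda}}};{{\lambda}^+};$ and $\mu<\lambda$ (with $\lambda>\omega$), there is a coordinate $n$ such that at least $\mu$ values $j$ satisfy $|\{a\in A:a(n)=j\}|=\lambda^+$. Proof: otherwise, for every $n$ the set $J_n=\{j:|\{a\in A:a(n)=j\}|=\lambda^+\}$ has $|J_n|<\mu$; since there are $\le\lambda$ values at each coordinate and $\lambda^+$ is regular, all but $\le\lambda$ many $a\in A$ have $a(n)\in J_n$ for any fixed $n$, hence all but $\le\lambda\cdot\omega=\lambda$ many $a\in A$ lie in $\prod_n J_n$, a set of size $\le\mu^{\omega}<\lambda$ (strong limit), contradicting $|A|=\lambda^+$. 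This lets you split $\lambda_n$-fold at stage $n$ with every child of size $\lambda^+$, producing $\prod_n\lambda_n=\lambda^{\omega}$ branches. (For $\lambda=\omega$ only binary splitting is guaranteed, but then $2^{\omega}=\lambda^{\omega}$ already.)

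Two corrections. First, drop the preliminary thinning: it is not needed for the splitting argument, and your pigeonhole on ${}^{\omega}\omega$ fails when $\lambda=\omega$ since $2^{\omega}$ may exceed $\lambda^+=\omega_1$. Second, splitting alone does not make the points $d_{b\restriction n}$ converge in $\rho_\lambda$; you must also shrink diameters. After each splitting step, refine every child $C_{\sigma\frown j}$ to a $\lambda^+$-sized subset that is constant on coordinate $|\sigma|$ (one of the $\le\lambda$ fibers has full size by regularity of $\lambda^+$). Then members of $C_\sigma$ agree on the first $|\sigma|$ coordinates, each branch $b$ determines a unique limit $x_b\in\overline{D}$ where $D=\{d_\sigma:\sigma\text{ a node}\}$ has size $\le\lambda$ (pad to $\lambda$ if necessary), and distinct branches differ at some splitting coordinate, so the $x_b$ are pairwise distinct and $|\overline{D}^{\rho_\lambda}|\ge\lambda^{\omega}$.
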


We say that $T\subs ^{<{\omega}}{{\kappa}}$ is a {\em well-founded tree} if  $T$ is closed
under initial segments,  and the poset $\mc T=\<T,\subs \>$ is a tree without 
infinite branches.

Given a cardinal ${\kappa}$
and an ordinal ${\alpha}<{\kappa}^+$
one can find a well-founded tree  $T_{\alpha}\subs ^{<{\omega}}{{\kappa}}$
such that $\rank(\<T_{\alpha},\supset\>)={\alpha}\dotplus1$.

\begin{proof}[Proof of Theorem \ref{tm:beth}]
 Let $T\subs ({\kappa})^{<{\omega}}$ be a well-founded tree such that 
 $rank(\mc T)={\alpha}+1$, where $\mc T=\<T,\supset\>$.

If $t\in T$ is not the root of the tree then denote $pred(t)$ the predecessor of
$t$ in $\mc T$.

Write ${\lambda}={\omega}$ if ${\kappa}=2^{\omega}$, and ${\lambda}=\beth_{\beta}$
if ${\kappa}=\beth_{{\beta}\dotplus1}$. Then ${\lambda}$ is  a strong limit cardinal
with $\cf({\lambda})={\omega}$.

Consider the family  
 $$\mc D=\{D\in \br {\baire{{\lambda}}};{\lambda};:
|\overline{D}^{\rho_\lambda}|={\kappa}\},$$ 
and fix an enumeration $\{D_{\nu}:{\nu}<2^{\omega}\}$
of $\mc D$.  
Let $\{\<t_i,{\nu}_i,{\zeta}_i\>:i<2^{\omega}\}$ be an enumeration of 
$T\times {\kappa}\times {\kappa}$.

We define, by transfinite recursion on ${\gamma}$,
a sequence $\<\<X^{\gamma},\tau_{\gamma}\>:{\gamma}\le {\kappa}\>$ 
 of topological spaces, and 
  partition $\<X^{\gamma}_t:t\in T\>$ of the set $X^{\gamma}$,
such that 
\begin{enumerate}[(a)]
\item $X^{\gamma}\subs B({\lambda})$ with $|X^{\gamma}|\le |{\gamma}|$,
\item the topology $\tau_{\gamma}$ refines the metric topology $\rho_{\lambda}$,
 \item $\<X^{\gamma},\tau_{\gamma}\>$ is locally compact and locally countable,
 \item $\tau_\beta=\tau_\gamma\cap \mc P(X^{\beta})$ for ${\beta}<{\gamma}$,
 \item every  $x\in X^{\gamma}_t$ has a neighborhood $U(x)\in \tau_{\gamma}$
such that 
\begin{equation*}
U(x)\setm \{x\}\subs \bigcup \{X^{\gamma}_s: s\subsetneq t\}. 
\end{equation*}
\end{enumerate}

\medskip

\noindent {\bf Case 1: ${\gamma}=0$.}  

Let $X^0=\empt$ and  $X^0_t=\empt$ for $t\in T$.

\medskip

\noindent {\bf Case 2}: {\em ${\gamma}$ is a limit ordinal.}  

Let $X^{\gamma}_t=\bigcup_{{\beta}<{\gamma}}X^{\beta}_t$ and 
$X^{\gamma}=\bigcup_{{\beta}<{\gamma}}X_{\beta}$, and the topology 
$\tau_{\gamma}$ is generated by the family 
$\bigcup_{{\beta}<{\gamma}}\tau_{\beta}$.

\medskip

\noindent {\bf Case 3: ${\gamma}={\beta}\dotplus1$.}  

Consider the triple $\<t_{\beta},{\nu}_{\beta}, {\zeta}_{\beta} \>$.

If $t_{\beta}$ is the root of the  tree:   just pick a new element
$x_{\beta}\in \baire{{\lambda}}\setm X^{\beta}$, let    
$X^{{\gamma}}_{t_{\beta}}=X^{{\beta}}_{t_\beta}\cup \{x_{\beta}\}$, 
let $X^{\gamma}_t=X^{\beta}_t$
for $t\in T\setm\{t_{\beta}\}$, and declare 
$x_{\beta}$ to be  isolated in $\tau_{\gamma}$.

Assume now that $t_{\beta}$ is not the root of the tree. Let  $s=pred(t_{\beta})$.

If $D_{{\nu}_{\beta}}\not\subs  X^{\beta}_s$, then do nothing:
let $X^{\gamma}=X^{\beta}$ and $X^{\gamma}_t=X^{\beta}_t$ for all $t\in T$.

Assume finally that $D_{{\nu}_{\beta}}\subs X^{\beta}_s$.
Then pick  
$x_{\beta}\in \overline{D_{{\nu}_{\beta}}}^{\rho_\lambda}\setm X^{\beta}$, and let 
$X^{\gamma}=X^{\beta}\cup \{x_{\beta}\}$, 
$X^{{\gamma}}_{t_{\beta}}=X^{{\beta}}_{t_\beta}\cup \{x_{\beta}\}$, and $X^{\gamma}_t=X^{\beta}_t$
for $t\in T\setm\{t_{\beta}\}$.

To define the topology $\tau_{\gamma}\supset \tau_{\beta}$
 pick a sequence 
$\{x_{{\beta},n}\}_{n\in {\omega}}\subs D_{{\nu}_{\beta}}$ converging to $x_{\beta}$
in the metric topology $\rho_{\lambda}$,
and pick pairwise disjoint compact open neighborhoods  $U_n$ of $x_{{\beta},n}$ 
in $\tau_{\beta}$
such that 
\begin{enumerate}[(a)]
\item $U_n\setm\{x_{{\beta},n}\}\subs \bigcup\{X^{\beta}_r:r\subsetneq s\}$, and 
 \item the sequence of sets , $\{U_n:n\in {\omega}\}$,  converges to $x_{\beta}$  
 in the metric topology $\rho_{\lambda}$.
\end{enumerate}
Let 
\begin{align*}
 B_n(x_{\beta})=\{x_{\beta}\}\cup \bigcup\nolimits_{m\ge n}U_m.
\end{align*}
for $n\in {\omega}$.
The topology $\tau_{\gamma}$ is generated by the family
\begin{align*}
 \tau_{\beta}\cup\{B_n(x_{\beta}):n\in {\omega}\}.
\end{align*}
It is clear from the construction that the space 
$\<X^{\gamma},\tau_{\gamma}\>$ has properties (a)--(e).

\bigskip

Let $\mc X=\<X^{\kappa},\tau_{\kappa}\>$

We are to show that ${\kappa}$, $\mc T$, $\{X_t:t\in T\}$ and 
$\mc X$
satisfy (S1)--(S6) from Theorem \ref{tm:ordl-bounds}.

(S1)--(S4) are clear. 
Property (e) yields (S5). 

Since there are no infinite chains in $\mc T$
it is enough to verify $(S6)$ for $t=pred (s)$.

So let $S\in \br X_t;{\kappa};$.
Then there is ${\nu}<{\kappa}$ such that $D_{\nu}\subs X_t$.

If $i$ is large enough and ${\nu}_i={\nu}$ and $D_{\nu}\subs X^i_t$,
then in the $i$th step we add a new point to $\overline {D_{\nu}}\cap X_s$.
So $|\overline{D_{\nu}}\cap X_s|={\kappa}$, which proves (S6).

\medskip

So we can apply Theorem \ref{tm:ordl-bounds}(a) and (b)  to obtain 
\begin{align*}
 {\kappa}\cdot {\alpha}={\kappa}\cdot ( ({\alpha}\dotplus 1))\mydotminus1)\le
 \lord{\mc X}\le  {\kappa}\cdot ( ({\alpha}\dotplus 1))
\end{align*}
Then, applying  Lemma \ref {lm:appr-exact}
we obtain a closed subspace $Y\subs^{closed}X$
such that $\lord{Y}={\kappa}\cdot {\alpha}$.
\end{proof}

\section{Additivity}\label{sc:add}

The union of two scattered spaces is clearly scattered. 
What about left-separated spaces?

We will present an example showing that the  union of two left-separated spaces
is not necessarily left-separated. We need the following lemma.

\begin{lemma}\label{lm:baire_not_left_sep}
If $X$ is a topological space, $X=A\cup B$, both $A$ and $B$ are dense in 
$X$, and   
\begin{enumerate}[($*$)]
 \item $|A|$-many  nowhere dense subsets of $B$ can not cover
a nonempty open subset of $B$,\end{enumerate}
 then $X$ is not left-separated. 
\end{lemma}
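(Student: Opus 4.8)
The plan is to argue by contradiction: suppose $\prec$ is a left-separating well-ordering of $X$, and derive a contradiction with hypothesis $(*)$. First I would observe that, since $|A|$ is the relevant cardinal in $(*)$, the natural strategy is to look at the initial segments of $\prec$ determined by the points of $A$. For each $a\in A$ write $C_a=\{x\in X: x\prec a\}$, which is closed in $X$ by the left-separating property. Set $F_a=C_a\cap B$; this is a relatively closed subset of $B$. The key claim will be that each $F_a$ is \emph{nowhere dense} in $B$: indeed, if $F_a$ had nonempty interior in $B$, then since $B$ is dense in $X$ the set $C_a$ would have nonempty interior in $X$; but $C_a$ is an initial segment of $\prec$ not containing $a$, while $A$ is dense in $X$, so some $a'\in A$ with $a'\preceq a$ — in fact we can push further and use that every point of the $\prec$-interval must eventually be covered — lands in that interior, and a short bookkeeping argument shows this forces a contradiction with $C_a$ being a proper initial segment. (I would phrase this more carefully: if $U\subseteq C_a$ is nonempty open in $X$, pick the $\prec$-least element $y$ of $U$; then $y\in C_a$, so $y\prec a$, and $U\setminus C_y$ is a nonempty open subset of $U$ — here one uses that $\{z:z\prec y\}$ is closed, hence $U\cap\{z\succeq y\}$ is a nonempty relatively open set — contradicting minimality, or more simply: $C_a=\bigcup\{C_y: y\in C_a\}$ is an increasing union, and openness of $C_a$ would propagate down.)

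Next, the crucial counting step: I claim $\bigcup_{a\in A}F_a\supseteq B$, or at least covers a nonempty open subset of $B$. Every $b\in B\subseteq X$ sits in the well-order $\prec$; if there is $a\in A$ with $b\prec a$ then $b\in F_a$ and we are done for that $b$. The only points $b\in B$ not captured this way are those $\prec$-above every element of $A$ — i.e. $A$ is an initial segment of $X$ in the $\prec$-order. If $A$ is \emph{not} an initial segment (which is the generic case, but need not hold), then $B\subseteq\bigcup_{a\in A}F_a$, so the $|A|$-many nowhere dense sets $F_a$ cover all of $B$, hence cover a nonempty open subset of $B$, contradicting $(*)$. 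If $A$ \emph{is} an initial segment of $\prec$, then $A$ itself is closed in $X$ (as $A=\bigcup_{a\in A}\{x:x\prec a\}$ when $A$ has no $\prec$-maximum — need a small case split if it does, but then $A\setminus\{\max\}$ is closed and $A$ is closed too); since $A$ is also dense in $X$ and closed, $A=X$, whence $B\subseteq A$, so $B$ is covered by the single (or few) closed sets below elements of... — here I would instead directly observe $B\subseteq\overline{B}=X=A=\bigcup_a C_a$ and each $C_a\cap B$ is nowhere dense in $B$ by the claim, again contradicting $(*)$ since $|A|$ nowhere dense sets cover $B\supseteq B$, a nonempty open subset of itself.

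The main obstacle I anticipate is the nowhere-density claim — establishing that each $F_a=C_a\cap B$ has empty interior in $B$. The subtlety is that a closed initial segment $C_a$ in a left-separated space can perfectly well have nonempty interior in $X$ in general (e.g. isolated points below $a$); what saves us is the interaction with density of $A$: any nonempty open $U\subseteq X$ must contain points of $A$, and among the points of $A$ in $U$ there is a $\prec$-least one, call it $a_0$, and then $U\not\subseteq C_{a_0}$ while also $U\cap A\cap C_{a_0}=\emptyset$; iterating/combining this for all relevant $a$ yields the contradiction. So the real content is a clean lemma: \emph{if $A$ is dense in $X$ and $\prec$ left-separates $X$, then for every $a\in A$ the closed set $\{x:x\prec a\}$ has empty interior in $X$} — from which $F_a$ nowhere dense in $B$ follows since $B$ is dense, and then the covering/counting step finishes the proof against $(*)$. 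I would isolate and prove that sublemma first, then assemble the two cases above.
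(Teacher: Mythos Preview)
Your isolated sublemma --- that density of $A$ alone forces $C_a=\{x:x\prec a\}$ to have empty interior for every $a\in A$ --- is false. For a counterexample take $X=\omega_1$ with the order topology, $A=X$, and $\prec$ the natural well-order; then $C_\omega=[0,\omega)$ is open. None of your three attempts to prove the sublemma succeed: picking the $\prec$-least $y\in U$ and passing to the open set $U\setminus C_y$ makes no progress, since $y$ remains the $\prec$-least element of $U\setminus C_y$; the identity $C_a=\bigcup_{y\in C_a}C_y$ fails whenever $a$ has an immediate $\prec$-predecessor; and locating the $\prec$-least $a_0\in U\cap A$ shows only that $U\not\subseteq C_{a_0}$, which says nothing about whether $U\subseteq C_a$ for the given, larger, $a$. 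So the nowhere-density claim cannot be established in this way, and with it your covering argument collapses.

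The idea you are missing is \emph{minimality}. The paper does not try to show every initial segment is nowhere dense; instead it lets $\nu$ be the least ordinal for which $I_\nu=\{x_\xi:\xi<\nu\}$ has nonempty interior $U$ (such $\nu$ exists since $I_\mu=X$). By minimality, $I_{\nu'}$ is closed and nowhere dense for every $\nu'<\nu$. Density of $A$ now enters in a different role: since $A\cap U$ is dense in $U$ and contained in the closed set $I_\nu$, the $\prec$-positions of the points of $A\cap U$ must be cofinal in $\nu$ (else some shorter $I_{\nu'}$ would already contain $\overline{A\cap U}\supseteq U$), whence $\operatorname{cf}(\nu)\le|A|$. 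Choosing $(\nu_i)_{i<\operatorname{cf}(\nu)}$ cofinal in $\nu$ writes $I_\nu=\bigcup_i I_{\nu_i}$ as a union of at most $|A|$ closed nowhere dense sets; intersecting with $B$ gives at most $|A|$ nowhere dense subsets of $B$ whose union contains $U\cap B$, contradicting $(*)$. Your instinct to cover by $|A|$-many sets is right; what has to change is \emph{which} initial segments you use and \emph{why} they are nowhere dense.
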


\begin{proof}
Assume on the contrary that $X$ is left-separated witnessed by the enumeration
$X=\{x_{\xi}:{\xi}<{\mu}\}$, i.e. $\{x_{\zeta}:{\zeta}<{\xi}\}\subs ^{closed}X$
for each ${\xi}<{\mu}$.

Let
\begin{align*}
{\nu}=\min\{{\mu}'\le {\mu}: U=int\overline{\{x_{\xi}:{\xi}<{\mu}'\}}\ne \empt\}. 
\end{align*}
Since $\{x_{\xi}:{\xi}<{\nu}\}$ is closed in $X$, it follows, that 
$A\cap U\subs \{x_{\xi}:{\xi}<{\nu}\}$.
Since $A\cap U$ is dense in $U$, by the minimality of ${\nu}$ we have 
$\cf {\nu}\le |A|$.
Let $\{{\nu}_n:n<\cf({\nu})\}$ be cofinal in ${\nu}$.

Then $\{a_{\xi}:{\xi}<{\nu}_n\}\cap B$ is nowhere dense for $n<\cf({\nu})$, so
by $(*)$ the set 
\begin{align*}
\{x_{\xi}:{\xi}<{\nu}\}\cap B=\bigcup_{n<\cf({\nu})}(\{x_{\xi}:{\xi}<{\nu}_n\}\cap B)
\end{align*}
can not cover a open subset of $B$. 

However the set $\{x_{\xi}:{\xi}<{\nu}\}$ is closed in $X$, which implies that  
$B\cap U\subs \{x_{\xi}:{\xi}<{\nu}\}$,  
 which is a contradiction. 
\end{proof}

The next example was constructed by Juhász, Soukup and Szentmiklóssy. 
\begin{example}\label{ex:union}
There are two dense left-separated subsets 
$A$ and $B$ of $2^{\mf c}$ such that 
$A\cup B$ is not left-separated. 
\end{example}

\begin{proof}
Let $A$ be a countable dense subsets of $2^{\mf c}$.
Let $B$ be a $G_{\delta}$-dense, left-separated subset of $2^{\mf c}$
with $|B|=\mf c$.  

Then $\lord{A}={\omega}$ and $\lord B=\mf c$.  
Since $B$ is $G_\delta$-dense, countably many nowhere dense 
sets can not cover a nonempty open subset of $B$.
So, by lemma \ref{lm:baire_not_left_sep}, the space $X=A\cup B$
is not left-separated. 
\end{proof}

\begin{theorem}
Let $(X_{i})_{i \in n}$ be a finite family of left-separated spaces such that 
for each $i \in n$, $\lord {X_{i}} = \oo$.  In addition, suppose $t(X) = \omega$ where $X = \bigcup_{i \in n} X_{i}$.
Then $X$ is left-separated and  $\lord{X} \leq \oo \cdot 
n$.
\end{theorem}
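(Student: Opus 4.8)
The plan is to prove the statement by induction on $n$, where the case $n=1$ is trivial and the inductive step reduces to the case $n=2$, i.e.\ showing that if $A$ and $B$ are left-separated with $\lord A=\lord B=\oo$ and $t(A\cup B)=\omega$, then $A\cup B$ is left-separated with $\lord{A\cup B}\le\oo\cdot 2$. Note $\lord{A\cup B}\le\oo\cdot 2$ would follow from $\lord{A\cup B}\le\lord A+\lord B$, so the heart of the matter is to produce a left-separating well-order of $A\cup B$ of type $\le\oo\cdot 2$ out of the given ones. This is a special instance of Theorem~\ref{tm:union} (with $\lord A+\lord B=\oo\cdot 2<\oo\cdot\omega$), so if that theorem may be invoked the present statement is an immediate corollary; I will instead sketch the direct argument, which is the engine behind Theorem~\ref{tm:union} as well.

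First I would fix left-separating enumerations $A=\{a_\xi:\xi<\oo\}$ and $B=\{b_\eta:\eta<\oo\}$, so that $A_\xi:=\{a_{\xi'}:\xi'<\xi\}$ is closed in $A$ and $B_\eta:=\{b_{\eta'}:\eta'<\eta\}$ is closed in $B$, for all $\xi,\eta<\oo$. The naive idea ``list all of $A$ then all of $B$'' fails because an initial segment $A\cup B_\eta$ need not be closed in $X$: points of $A$ may be limits of $B_\eta$. The standard fix is to interleave. Using $t(X)=\omega$, for each point $x\in X$ that lies in the closure of some set, that membership is witnessed by a countable subset; this is exactly what lets countably many ``closed pieces'' glue to a closed piece. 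Concretely I would build the well-order of $X$ in $\oo$-many blocks indexed by $\xi<\oo$, where block $\xi$ first appends $a_\xi$ (if not already listed) and then appends, in their $B$-order, all those $b_\eta$ with $\eta<\xi$ that are not in the closure (in $X$) of the part of $B$ already enumerated together with $\{a_{\xi'}:\xi'\le\xi\}$ — more carefully, one enumerates $B$ along the way so that whenever a new $a_\xi$ is added, one ``catches up'' on $B$ past the point where the newly available $a$'s could create closure problems. The resulting order has type $\le\oo+\oo=\oo\cdot 2$ since each of the $\oo$ blocks is countable (indeed each block adds one point of $A$ and a countable chunk of $B$).

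The key verification is that every initial segment $I$ of this order is closed in $X$. Write $I=(I\cap A)\cup(I\cap B)$. Then $I\cap A$ is an initial segment of the $A$-enumeration, hence closed in $A$; and $I\cap B$ is an initial segment of the $B$-enumeration, hence closed in $B$. What must be shown is that no point $x\in X\setminus I$ is a limit of $I$. If $x\in A\setminus I$, then by $t(X)=\omega$ a limit of $I$ through $x$ would be witnessed by a countable $C\subseteq I$; the part $C\cap A$ lies in $I\cap A$ which is closed in $A$, so $x\notin\overline{C\cap A}$, and the part $C\cap B$ is a countable subset of $B$, hence contained in some $B_\eta$ with $b_\eta$ already listed before $x$'s block was closed off — this is precisely what the ``catching up'' rule is designed to prevent, so $x\notin\overline{C\cap B}$ either, contradiction. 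The symmetric argument handles $x\in B\setminus I$. The main obstacle, and the place where countable tightness is essential, is organizing the interleaving so that for each $a_\xi$, all $b_\eta$ that $a_\xi$ could ``pull into'' a closure problem have already been enumerated by the time $a_\xi$'s block completes; without bounding the relevant $b$'s by a countable set (which $t(X)=\omega$ provides) the blocks would no longer be countable and the type could exceed $\oo\cdot 2$. I expect the bookkeeping of this catch-up schedule, and checking it interacts correctly with limit stages $\xi$, to be the only delicate part; the closedness verification is then a routine application of countable tightness as sketched.
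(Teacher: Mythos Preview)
Your reduction to $n=2$ does not work as stated: at the inductive step you would have $A=\bigcup_{i<n-1}X_i$ with $\lord A\le\oo\cdot(n-1)$, not $\lord A=\oo$, so the $n=2$ instance of the present theorem does not apply. You would need the full inequality $\lord{A\cup B}\le\lord A+\lord B$, but in this paper that inequality is a \emph{consequence} of the present theorem, so invoking it is circular. More seriously, your interleaving sketch for $n=2$ cannot succeed. You build the order in $\oo$-many countable blocks, which yields type at most $\oo$, not $\oo\cdot2$ as you write; yet there exist first countable $X$ with $\lord X=\oo\cdot2$ that split as $A\cup B$ with $\lord A=\lord B=\oo$ (take the first and last $\oo$-segments of a left-separating order of type $\oo\cdot2$), so no type-$\oo$ enumeration is left-separating. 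The hole in your verification is the step ``$x\in A\setminus I$ is not a limit of $C\cap B$'': you only know $C\cap B\subseteq B_\eta$, which is closed in $B$, but $\overline{B_\eta}$ taken in $X$ may contain points of $A$. No catching-up schedule on $B$ repairs this, because the obstruction is that $a_\xi$ may be a limit of $b$'s already listed \emph{before} it.

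The paper takes a different route, with no induction on $n$. It partitions $X$ into layers $Y_i=\bigcup_{A\in[n]^i}Z_A$, where $Z_A=\bigl(\bigcap_{j\notin A}\overline{X_j}\bigr)\setminus\bigl(\bigcup_{j\in A}\overline{X_j}\bigr)$, and shows each $\bigcup_{j\le i}Y_j$ is closed in $X$. To prove $\lord{Y_i}\le\oo$ it fixes a continuous chain $(N_\alpha)_{\alpha<\oo}$ of countable elementary submodels containing the $X_j$'s and orders $Y_i$ by the $N_\alpha$-filtration. The key point is that $\overline{Z_A\cap N_\alpha}\cap Z_A\subseteq N_\alpha$: any $x\in Z_A$ lies in some $X_j$ with $j\notin A$, countable tightness gives $x\in\overline{X_j\cap N_\alpha}$, and since $X_j\cap N_\alpha$ is (by elementarity and $\lord{X_j}=\oo$) an initial segment of the left-separating order of $X_j$, it is closed in $X_j$, forcing $x\in N_\alpha$. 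The closure-based decomposition into the $Z_A$'s is the idea your sketch is missing; it, rather than an interleaving schedule, is what produces $n$ pieces each left-separable in type $\le\oo$.
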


\begin{proof}
Let $(X_{i})_{i \in n}$ be as stated in the hypothesis of the theorem.  Let $X = 
\bigcup_{i \in n} X_{i}$. For each $i \in n$ and for each $A \in [n]^{i}$, 
define 
$$Z_{A} = {\left( \bigcap_{j \notin A} \overline{X_{j}} \right)} \setm {\left( 
\bigcup_{j \in A} \overline{X_{j}} \right) } .$$ For each $i \in n$, define 
$Y_{i} = \bigcup_{A \in [n]^{i}} Z_{A}$. 

\begin{claim} \label{1.1.2}
If $A,B\in [n]^{<n}$  and $B\setm A\ne \empt$ then 
$\overline{Z_A}\cap Z_B =\empt$.
\end{claim}

\begin{claim}\label{1.1.1}
For each $i \in n$, $\bigcup_{j \leq i}Y_{j}$ is closed in $X$,
and $\{ Y_{i}: i \in n\}$  forms a partition of $X$.
\end{claim}

\begin{claim}\label{1.1.3}
For each $i \in n$, $Y_{i}$ is left-separated and $\lord{Y_{i}} \leq \oo$.
\end{claim}

These three claims prove our theorem.  Indeed, let $\preceq_i$ be a left-separating 
well-ordering of $Y_i$ in type $\le {\omega}_1$.  Define the well-ordering 
$\preceq$ of $X$ as follows:
\begin{equation*}
y\preceq z \text{ iff \big($y,z\in Y_i$ and $y\preceq_i z$\big)
or $ \big( y\in Y_i$ and $z\in \bigcup_{j>i}Y_j\big )$.  }  
\end{equation*}

Since the $\preceq_i$ are left-separating and  
Claim \ref{1.1.1} holds, 
the initial segments  of $(X,\preceq)$ are closed. 
Moreover, $tp(X,\preceq)\le {\omega}_1\cdot n$ because 
$tp(Y_i,\preceq)\le {\omega}_1.$

\medskip

We first prove claim \ref{1.1.2}.
\begin{proof}[Proof of claim \ref{1.1.2}]
Let $A, B \in[n]^{<n}$ such that $B \backslash A \not= \emptyset.$  Let $k \in B 
\setm A$.  Then $Z_{A} \subset \overline{X_{k}}$ and $Z_{B} \cap 
\overline{X_{k}} = \emptyset$.  Thus $\overline{Z_{A}} \cap Z_{B} = \emptyset$.  
\end{proof}

We now prove claim \ref{1.1.1}.

\begin{proof}[Proof of claim \ref{1.1.1}]
Since 
$X \subset \bigcup \{ Z_{A} | i \in n \wedge A \in [n]^{i} \},$
and $Z_A\cap Z_B=\empt$ for $A\ne B$   by Claim \ref{1.1.2},  the family 
$\{ Y_{i}| i \in n\}$  forms a partition of $X$.

Fix $i \in n$.  Then
\begin{equation*}
\overline{\bigcup_{j\le i}Y_j}\cap (\bigcup_{i<k<n}Y_k)=
\bigcup_{j\le i}\bigcup_{j<k<n}\bigcup_{A\in [n]^i}
\bigcup_{B\in [n]^k}\overline{Z_A}\cap Z_B. 
\end{equation*}
By \ref{1.1.2} this set is empty.
Thus $\bigcup_{j\le i}Y_j$ is closed in $X$.
\end{proof}

The plan for the proof of Claim \ref{1.1.3} is that for each $i \in n$ and each 
$A \in [n]^{i}$ we will well order $Z_{A}$, shuffle $\{ Z_{A}| A \in [n]^{i}\}$ 
together and then show that this well ordering will witness that $Y_{i}$ is left-separated and $\lord{Y_{i}} \leq \oo$.

Let $(N_{\alpha})_{\alpha < \oo}$ be a continuous chain of countable elementary 
submodels such that $X_{i} \in N_{0}$ for each $i \in n$ and $X \subset 
\bigcup_{\alpha < \oo} N_{\alpha}$.

\begin{proof}[Proof of Claim \ref{1.1.3}]
Fix $i \in n$. For each $\alpha < \omega_{1}$ the set $Y_{i} \cap 
N_{\alpha + 1} \setm N_{\alpha}$ is countable and we write it as an $\omega$ 
sequence $\{ z(\alpha, j ) | j \in \omega \}$.  Fix $\alpha < \omega_{1}$. Then $$Y_{i} = 
\{ z(\alpha, j)| j \in \omega \wedge \alpha < \oo  \} $$ and using the lexicographic ordering this well orders 
$Y_{i}$ in order type less than or equal to $\oo$.  We now show that this well 
ordering witnesses that $Y_{i}$ is left-separated.

Since $Z_{A}\cap\overline{Z_{B}}=\empt $ for $A \not= B$ with $A, B \in [n]^{i}$ by Claim
\ref{1.1.2}, it is enough to prove that every $Z_{A}$ is left-separated in 
our well-ordering.

Fix $A\in [n]^i$. 

Let  $z(\alpha , m) \in Z_A$.  Then $z(\alpha, m) \in N_{\alpha +1} \setm 
N_{\alpha}$.  To show that $Z_A$ is left-separated it is sufficient to show 
that $z(\alpha , m ) \notin \overline{Z_A \cap N_{\alpha}}$.  

\begin{claim}
For all $j \notin A$,
$\overline{Z_A \cap N_{\alpha}}  \subset \overline{X_{j} 
\cap N_{\alpha}}$.
\end{claim}

\begin{proof}
Fix $j \notin A.$ Let $x \in Z_A \cap N_{\alpha}$.   So $x \in 
\overline{X_{j}} \cap N_{\alpha}$.  Since $t(X) = \omega$, let $H \in 
[X_{j}]^{\omega} \cap N_{\alpha}$ so that $x \in \overline{H}$.  Since $|H| = 
\omega$ and $H \in N_{\alpha}$ we have that $H \subset N_{\alpha} $.  Thus, $H 
\subset X_{j} \cap N_{\alpha}$ 
and so 
$x\in \overline{X_{j} \cap N_{\alpha}}$.

Thus $Z_A \cap N_{\alpha}  \subset \overline{X_{j} 
\cap N_{\alpha}}$, 
and so $\overline{Z_A \cap N_{\alpha}}  \subset 
\overline{X_{j} \cap N_{\alpha}}$.
\end{proof}

\begin{claim}
$\overline{Z_A \cap N_{\alpha}} \cap Z_A \subset N_{\alpha}$.
\end{claim}

\begin{proof}
Let $x \in \overline{Z_A \cap N_{\alpha}} \cap Z_A$.  
Then $x \notin 
\bigcup_{j \in A} \overline{X_{j}}$.  
Let $j \in (n\setm A)$ such that $x \in X_{j}$.  
Since $ x \in \overline{Z_A \cap N_{\alpha}}$ and $j \notin A$,  by 
the previous claim we have that $x \in \overline{X_{j} \cap N_{\alpha}} \cap 
X_{j}$.  As $X_{j}$ is left-separated, this implies that $x \in N_{\alpha}$. 
\end{proof}

Now let us return to $z(\alpha , m)$.  Since $z(\alpha, m ) \notin N_{\alpha}$ 
and $z(\alpha, m) \in Z_A$, then we have that $z(\alpha, m) \notin 
\overline{Z_A \cap N_{\alpha}}$ which completes the proof.
\end{proof}

\end{proof}

\section{``Left-separated'' property in  c.c.c.  generic extensions}\label{sc:pres}

In this section we prove theorem  \ref{tm:presccc}.

To prove theorem \ref{tm:presccc}(1) we need the following lemma:

\begin{lemma}\label{lm:ccc}
Assume that $X$ is a topological space,  $|X|={\omega}_1$,  
$Q$ is a c.c.c poset
such that 
\begin{align*}
 V^Q\models X=A\cup B, A\subset^{\tiny closed}X, \lord B=\oo. 
\end{align*}
Then there is a partition $X=A^*\cup B^*$ in the ground model  such that 
\begin{enumerate}[(1)]
\item $A^*\subset^{\tiny closed}X$, $\lord {B^*}\le \oo$, 
 \item $1_Q\Vdash A^*\subs {\dot A}$.
\end{enumerate}
 \end{lemma}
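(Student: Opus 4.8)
The plan is to work in $V$ and build, by transfinite recursion of length $\oo$, an increasing continuous chain of countable sets together with $Q$-names for closed sets, using the c.c.c. of $Q$ to ``absorb'' the forcing into a ground-model construction. Fix in $V^Q$ the decomposition $X=\dot A\cup\dot B$ with $\dot A$ closed and a left-separating well-order $\dot\prec$ of $\dot B$ of type $\le\oo$; let $\dot b:\oo\to\dot B$ name the induced enumeration (so $\{\dot b(\eta):\eta<\xi\}$ is closed in $\dot B$, hence $\dot A\cup\{\dot b(\eta):\eta<\xi\}$ is closed in $X$, for each $\xi<\oo$). First I would, for each point $x\in X$, record a maximal antichain $\{q^x_k:k<\omega\}$ deciding whether $x\in\dot A$, and in the case $x\in\dot B$ deciding the value $\dot\xi(x)<\oo$ of the $\dot\prec$-rank of $x$; since $Q$ is c.c.c. these antichains are countable, and the set of ordinals appearing as a decided rank below a given condition is bounded in $\oo$. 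Using this, for a countable $M\subs X$ I can attach the countable set of conditions and the countable set $\sup$ of ranks they force, and pass to a countable elementary submodel $N\prec H(\theta)$ containing $X,Q,\dot A,\dot B,\dot\prec$ and the relevant names; then $\delta_N=N\cap\oo$ is a countable ordinal and every condition forces that $\{\dot b(\eta):\eta<\delta_N\}\cap N = \dot B\cap N$ on a dense set — this is the key ``reflection'' step.

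Concretely, I would take a continuous increasing chain $\<N_\alpha:\alpha<\oo\>$ of countable elementary submodels with $X,Q,\dot A,\dot B,\dot\prec\in N_0$ and $X\subs\bigcup_\alpha N_\alpha$, and set $\delta_\alpha=N_\alpha\cap\oo$. Define in $V$
\begin{align*}
A^*&=\{x\in X: 1_Q\Vdash x\in\dot A\},\\
B^*&=X\setm A^*.
\end{align*}
Then (2) holds by definition. For the enumeration of $B^*$ witnessing $\lord{B^*}\le\oo$: well-order $B^*\cap(N_{\alpha+1}\setm N_\alpha)$ in order type $\le\omega$ for each $\alpha$, and concatenate these in increasing order of $\alpha$; call the resulting well-order $\prec^*$, of type $\le\oo$.

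The heart of the argument is to show $A^*$ is closed in $X$ and that every proper initial segment of $(B^*,\prec^*)$ is closed in $B^*$. For closedness of $A^*$: if $x\notin A^*$ then some $q\Vdash x\in\dot B$; then $q$ forces $x$ has a $\dot\prec$-rank $\xi<\oo$, and $q$ forces $\dot A\cup\{\dot b(\eta):\eta\le\xi\}$ to be a closed set containing $x$ and disjoint from $\{\dot b(\eta):\eta>\xi\}$; I would use this, together with the antichain bookkeeping above and a genericity/density argument, to produce a ground-model neighborhood of $x$ missing $A^*$ — the point being that any $y\in A^*$ is forced into $\dot A$ by $1_Q$, hence forced by $q$ into $\dot A$, hence (below $q$) separated from $x$. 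For the initial segments of $\prec^*$: a proper initial segment is contained in some $B^*\cap N_\alpha$, so it suffices to show $A^*\cup(B^*\cap N_\alpha)$ is closed in $X$ (equivalently $\overline{B^*\cap N_\alpha}\cap B^*\subs N_\alpha$). Here I would run the standard elementary-submodel closure argument: if $x\in\overline{B^*\cap N_\alpha}$ and $x\in B^*$, pick $q$ forcing $x\in\dot B$ with rank $\xi$; since $q\in N_\alpha$ (by c.c.c.\ and elementarity the deciding antichain is in $N_\alpha$) and $q$ forces $x\in\overline{B^*\cap N_\alpha}\subs\overline{\{\dot b(\eta):\eta<\delta_\alpha\}}$ which is $\dot\prec$-closed, $q$ forces $\xi<\delta_\alpha$, and then elementarity of $N_\alpha$ gives $x\in N_\alpha$.

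The main obstacle I expect is the closedness of $A^*$: unlike the initial-segment argument, this is not a pure elementary-submodel closure computation, and one must genuinely combine the c.c.c.\ countable-antichain bound with the fact that $\dot A$ together with an initial segment of $\dot B$ is forced closed, to extract from a single condition $q\not\Vdash x\in\dot A$ a single ground-model open set around $x$ avoiding all of $A^*$. I would handle this by fixing such a $q$, letting $\xi_0<\oo$ bound the ranks $q$ forces on the (countably many) points it decides into $\dot B$, taking $N_\alpha\ni q$ with $\delta_\alpha>\xi_0$, and arguing that below $q$ the forced-closed set $\dot A\cup\{\dot b(\eta):\eta<\delta_\alpha\}$ contains $A^*$ and its complement is an open set containing $x$ — then pulling back a witnessing basic open set using that the topology of $X$ is in the ground model and $q$ decides membership of the (countably many relevant) points. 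If a single $q$ does not suffice, I would instead show directly that $x\notin\overline{A^*}$ by deriving a contradiction from $x\in\overline{A^*}$: $1_Q$ forces $A^*\subs\dot A$ and $\dot A$ closed, so $1_Q$ would force $x\in\dot A$, contradicting $q\Vdash x\in\dot B$ (using $\dot A\cap\dot B=\empt$, which we may assume after replacing $\dot B$ by $\dot B\setm\dot A$).
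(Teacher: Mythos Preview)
Your approach differs genuinely from the paper's and is more direct. The paper does \emph{not} set $A^*=\{x:1_Q\Vdash x\in\dot A\}$; instead, after identifying $X$ with $\omega_1$ and producing a club $E\subset\omega_1$ with $1_Q\Vdash\dot f\,''\varepsilon=\varepsilon\cap\dot B$ for every $\varepsilon\in E$ (the analogue of your reflection step), it partitions $X$ into the countable intervals $B^0_\xi$ between successive points of $E$ and runs a transfinite recursion of length $<\omega_2$: at stage $\mu+1$ one removes from each $B^\mu_\xi$ the points lying in $\overline{A^\mu\cup\bigcup_{\zeta<\xi}B^\mu_\zeta}$, proving inductively that every removed point is forced into $\dot A$. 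The recursion stabilizes at some $\mu<\omega_2$ and $A^*=A^\mu$. Your definition bypasses this $\omega_2$-iteration entirely and produces the maximal admissible $A^*$; the paper's $A^\mu$ is in general a proper subset of yours.

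The step you were most worried about---closedness of $A^*$---is in fact the trivial one, and your last two lines already prove it: if $x\in\overline{A^*}$ then, since the topology lives in $V$, $1_Q\Vdash x\in\overline{A^*}\subset\overline{\dot A}=\dot A$, hence $x\in A^*$. No antichains or submodels are needed.

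Your initial-segment sketch, however, contains two errors. First, ``$q\in N_\alpha$'' is circular: the antichain deciding $x$ is definable from $x$, so it lies in $N_\alpha$ only after you already know $x\in N_\alpha$. Second, the forced inclusion $B^*\cap N_\alpha\subset\{\dot b(\eta):\eta<\delta_\alpha\}$ is false: a point of $B^*$ merely has \emph{some} condition putting it into $\dot B$, so in the extension it may well land in $\dot A$. The repair is to keep $\dot A$ on the right-hand side. By c.c.c.\ and elementarity (your reflection step) $1_Q\Vdash\dot B\cap N_\alpha=\dot b\,''\delta_\alpha\subset N_\alpha$, so $1_Q\Vdash X\cap N_\alpha\subset\dot A\cup\dot b\,''\delta_\alpha$, and the right side is forced closed in $X$ (as $\dot A$ is closed in $X$ and $\dot b\,''\delta_\alpha$ is closed in $X\setminus\dot A$). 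Hence $1_Q\Vdash\overline{B^*\cap N_\alpha}\subset\dot A\cup\dot b\,''\delta_\alpha$. Now for $x\in\overline{B^*\cap N_\alpha}\cap B^*$ pick any $q\Vdash x\in\dot B$; then $q\Vdash x\in\dot b\,''\delta_\alpha\subset N_\alpha$, and since $x$ and $N_\alpha$ are in $V$ this yields $x\in N_\alpha$. You never need $q\in N_\alpha$.
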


\begin{proof}[Proof of lemma \ref{lm:ccc}]
We can assume that the underlying set of $X$ is $\oo$.

Assume that 
\begin{align*}
 1_Q\Vdash \dot f:\oo\to \dot B\text{ is bijection witnessing 
$\lord {\dot B}=\oo$. }
\end{align*}

For ${\alpha}\in \oo$, let
\begin{align*}
 b({\alpha})=\{x\in X :\ \exists q\in Q\ q\Vdash  f({\alpha})=x \}.
\end{align*}
Clearly $b({\alpha})\in \br X;{\omega};$.
For $b\in \br X;{\omega};$
let 
\begin{align*}
 {\alpha}_b=\sup\{{\gamma}<\oo: \exists q\in Q\ q\Vdash \dot f({\gamma})\in b \}.
\end{align*}
Let 
\begin{align*}
 E=\{{\gamma}<\oo: (\forall {\zeta}<{\gamma})\ {\alpha}_{b({\zeta})}<{\gamma}\}.
\end{align*}

Then 
\begin{align}\label{eq:0}
 1_Q\Vdash \dot f''\varepsilon=\varepsilon \cap \dot B  
\end{align}
for all $\varepsilon\in E$.

\begin{claim}
 $E\subs \oo$ is club.
\end{claim}

$E$ is clearly closed. If ${\delta}<\oo$, then define countable  ordinals ${\delta}_n$
and countable subsets $D_n$ of $\oo$ for $n<{\omega}$ as follows:
\begin{itemize}
 \item ${\delta}_0={\delta}$ 
\item $D_n=\bigcup\nolimits_{{\beta}<{\delta}_n}b({\beta})$.
\item ${\delta}_{n+1}={\alpha}_{D_n}$.
\end{itemize}
Then ${\gamma}=\bigcup_{n<{\omega}}{\delta}_n\in E$ and ${\delta}<{\gamma}$.
So $E$ is unbounded as well, which proves the claim. \qed

\smallskip 

By induction on ${\nu}<\omega_2$, we will define families 
\begin{align*}
 \{B^{\nu}_{\xi}:{\xi}<\oo\}
\end{align*}
of pairwise 
disjoint countable subsets of $\oo$ and subsets 
$A^{\nu}\subs \oo$ as follows:
\begin{enumerate}[$\bullet$]
 \item Let $\{\varepsilon_{\xi}:1\le{\xi}<\oo\}$ be the  increasing enumeration 
of $E$, write $\varepsilon_0=0$,
and let
\begin{align*}
B^{0}_{\xi}=\varepsilon_{\xi+1}\setm \varepsilon_{\xi} 
\end{align*}
for ${\xi}<\oo$; 
\item Let 
\begin{align*}
A^{\nu}=X\setm \bigcup\nolimits_{{\xi}<\oo}B^{\nu}_{\xi}. 
\end{align*}
\item If ${\nu}$ is a limit ordinal, let 
\begin{align*}
 B^{\nu}_{\xi}=\bigcap\nolimits_{\eta<{\nu}}B^{\eta}_{\xi}.
\end{align*}
\item if ${\nu}={\mu}+1$, let 
\begin{align*}
B^{\mu+1}_{\xi}=B^{\mu}_{\xi}\setm \overline {
A^\mu\cup\bigcup\nolimits_{{\zeta}<{\xi}}B^{\mu}_\zeta.
}
\end{align*}
\end{enumerate}
\begin{claim}
$1_Q\Vdash {A^{\nu}}\subs \dot A$. 
\end{claim}
\begin{proof}[Proof of the Claim]
By induction on ${\nu}$. 

\noindent {\bf Case 1. }{\em ${\nu}=0$.} 

Since  $A^0=\empt$, the statement is trivial.

\smallskip

\noindent {\bf Case 2. }{\em ${\nu}$ is limit.}

Then $A^{\nu}=\bigcup\nolimits_{{\mu}<{\nu}}A^{\mu}$, so the statement is clear. 

\smallskip

\noindent {\bf Case 3. }{\em ${\nu}={\mu}+1$.}

Assume that $x\in    B^{\mu}_{\xi}\setm B^{\mu+1}_{\xi}$.
Then 
\begin{align}\label{eq:1}
x\in \overline {A^\mu\cup\bigcup\nolimits_{{\zeta}<{\xi}}B^{\mu}_\zeta}. 
\end{align}
Moreover, 
\begin{align}\label{eq:2}
 1_Q\Vdash A^\mu\subs \dot A
\end{align}
by the inductive assumption, and  
\begin{align}\label{eq:3}
\bigcup\nolimits_{{\zeta}<{\xi}}B^{\mu}_{\zeta}\subs 
\bigcup\nolimits_{{\zeta}<{\xi}}B^{0}_{\zeta}=\varepsilon_{\xi}
\end{align}
by the construction.
Putting together \eqref{eq:0}, \eqref{eq:1}, \eqref{eq:2} and \eqref{eq:3} we have 
\begin{align*}
 1_Q\Vdash x\in \overline {\dot A\cup \varepsilon_{\xi}}=
\overline {\dot A}\cup \overline{\dot B\cap \varepsilon_{\xi}}=
\overline {\dot A}\cup \overline{\dot f''\varepsilon_{\xi}}=\\
\dot A\cup
(\dot f''\varepsilon_{\xi})=
\dot A\cup
(\dot B\cap \varepsilon_{\xi}).
\end{align*}
Since $x\notin \varepsilon_{\xi}$, it follows that 
\begin{align*}
 1_Q\Vdash x\in \dot A, 
\end{align*}
which completes the proof of the claim.
\end{proof}

For all ${\xi}<\oo$, the family $\<B^{\nu}_{\xi}:{\nu}<\oot\>$ is decreasing,
so there is ${\nu}_{\xi}<\oot$ such that
\begin{align*}
B^{\nu}_{\xi}=B^{{\nu}_{\xi}}_{\xi}. 
\end{align*}
 for all ${\nu}_{\xi}\le {\nu}<\oot$.
So there is ${\mu}<\oot$
such that 
\begin{align*}
 B^{\mu+1}_{\xi}=B^{\mu}_{\xi} \text{ for all ${\xi}<\oo$}, 
\end{align*}
i.e. 
\begin{align}\label{eq:5}
A^{\mu}\cup\bigcup_{{\zeta}<{\xi}}B^{\mu}_{\zeta}\text{ is closed in $X$ for all ${\xi}<\oo$}
\end{align}
Let 
\begin{align*}
\text{$A^*=A^{\mu}$ and $B^*=X\setm A^*=\bigcup\nolimits_{{\xi}<\oo}B^{\mu}_{\xi}$.} 
\end{align*}
 Then $A^*$ is closed in $X$
by \eqref{eq:5}. 
So $\bigcup_{{\zeta}<{\xi}}B^{\mu}_{\zeta}$ is also closed
in $X\setm A$ for all ${\xi}<\oo$.   
So $B=X\setm A$ is left separated in type $\le \oo$: 
order every $B^{\mu}_{\xi}$
is type $\le \omega$, and declare that, for all ${\zeta}<{\xi}$, 
every element of $B^{\mu}_{\zeta}$ is less
then every element of $B^{\mu}_{\xi}$:
\begin{align*}
 B^{\mu}_0<B^1_{\mu}<\dots < B^{\mu}_{\zeta}<\dots  < B^{\mu}_{\xi}<\dots
\end{align*}
 
\end{proof}

\begin{proof}[Proof of theorem \ref{tm:presccc}(1)]
By induction on $n\in \omega $ we prove the following statement:
\begin{enumerate}[($*_n$)]
 \item 
 If $X$ is a topological space, $Q$ is a c.c.c poset such that 
\begin{align*}
 V^Q\models \lord{X}\le\omega_1\cdot n 
\end{align*}
then  $X$ is left-separated in even in $V$, moreover
\begin{align*}
 \lord{X}^V\le \omega_1\cdot n.      
\end{align*}

\end{enumerate}
Assume that ($*_m$) holds for $m<n$, and 
$X$ is a topological space, $Q$ is a c.c.c poset such that 
\begin{align*}
 V^Q\models \lord{X}\le\omega_1\cdot n 
\end{align*}
We can assume that in $V^Q$ the space $X$ has a left-separating well-ordering
in type $\omega_1\cdot n$. (If $\lord{X}<\omega_1\cdot n$, add 
$\omega_1$ isolated points to $X$).
Let $B$ be the last $\oo$ many points of $X$ in that well-ordering and 
$A=X\setm A$.  

By lemma \ref{lm:ccc}
then there is a partition $X=C\cup D$ in the ground model  such that 
\begin{enumerate}[(1)]
\item $C\subset^{\tiny closed}X$, $\lord D\le \oo$, 
 \item $1_Q\Vdash C\subs A$.
\end{enumerate}
Then 
\begin{equation*}
\text{$V^Q\models \lord{C}\le \lord{A}\le \omega_1\cdot (n-1)$,}
\end{equation*}
so by the inductive assumption, we have $\lord{C}\le \omega_1\cdot (n-1)$ 
in the ground model.
Thus $\lord{X}\le \lord{C}+\lord{D}=\omega_1\cdot (n-1)+\omega_1=
\omega_1\cdot n$ because $C$ is closed in $X$.
So we proved theorem \ref{tm:presccc}(1).
\end{proof}

Next we show that, consistently, a non-left-separated space may be 
left-separated in some c.c.c generic extension of the ground model. 
The construction of the model of theorem \ref{tm:smtsd}
is quite complicated, so first we consider an other example which is 
much simpler, but also much weaker. {\em first we give a Hausdorff example 
 which is 
much simpler/ which can be obtained much simpler.}

Let us recall that an uncountable subset of the reals is called {\em Luzin}
if it has countable intersection with every meager set. 
Mahlo and Luzin, independently,  proved that if CH holds, then there is a 
Luzin set.

\begin{theorem}
If there is a Luzin set, then 
there is a Hausdorff topological space $X$ such that 
$\lord X=\infty$, but $\lord X=\omega_1\cdot  \omega $ is some 
c.c.c generic  extension.    
\end{theorem}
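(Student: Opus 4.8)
The plan is to build $X$ on the underlying set $L\times\omega$, where $L=\{\ell_\xi:\xi<\omega_1\}$ is a Luzin set in $\mathbb R$ (or in $2^\omega$), so that the $\xi$-th ``column'' $C_\xi=\{\ell_\xi\}\times\omega$ will be the block of points that a left-separating well-order of type $\oo\cdot\omega$ must enumerate ``in one $\omega_1$-block''. More precisely, I would topologize $X$ so that each $C_\xi$ is a convergent-sequence-like closed discrete set whose ``limit behaviour'' is governed by $\ell_\xi$ together with the columns $C_\eta$, $\eta<\xi$; the key is to arrange the neighborhoods of the $n$-th point $(\ell_\xi,n)$ to reach down into $\bigcup_{\eta<\xi}C_\eta$ in a way that mimics a neighborhood of $\ell_\xi$ in $\mathbb R$ relative to the reals $\{\ell_\eta:\eta<\xi\}$. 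Then in the ground model, any attempt to left-separate $X$ would, after passing to the first initial segment whose closure has nonempty interior (as in Lemma~\ref{lm:baire_not_left_sep}), require covering an open subset of the ``$L$-part'' by countably many nowhere dense sets coming from earlier columns — and the Luzin property of $L$ forbids this, because a Luzin set is concentrated/non-meager-in-itself, so $\lord X=\infty$.

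The forcing side is where the Luzin set earns its keep. I would take $Q$ to be the measure algebra, or random real forcing (which is c.c.c., indeed $\sigma$-centered is too much but c.c.c.\ suffices), arranged so that in $V^Q$ the set $L$ becomes meager — equivalently, one adds a single real that ``localizes'' $L$ into an increasing union $L=\bigcup_n L_n$ with each $L_n$ nowhere dense (this is exactly what a random real, or more cheaply a Cohen-type partition, does to a Luzin set: a Luzin set is always meager after adding a Cohen real). Using such a decomposition $L=\bigcup_n L_n$, I would define in $V^Q$ the well-order of $X$ that enumerates first all points in columns $C_\xi$ with $\ell_\xi\in L_0$, then those with $\ell_\xi\in L_1\setminus L_0$, and so on — $\omega$ many blocks, each block being a left-separated piece of type $\le\oo$ because within a block the relevant ``reaching down'' is into a nowhere dense, hence (in the block topology) closed-discrete-friendly, subset. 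Checking that each of the $\omega$ blocks is left-separated in type $\le\oo$ and that the concatenation keeps initial segments closed gives $\lord X=\oo\cdot\omega$ in $V^Q$; note that by Theorem~\ref{tm:presccc}(1) we could not hope to do better than $\oo\cdot\omega$, so this target value is forced on us.

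The main obstacle I anticipate is designing the topology on $X$ so that \emph{both} directions work with the \emph{same} space: the neighborhood filters of the points $(\ell_\xi,n)$ must be rich enough that, in $V$, every candidate enumeration runs afoul of the Luzin non-meagerness (so I need genuine ``spreading'' of neighborhoods across all earlier columns, governed by a real coordinate), yet structured enough that once $L$ is sliced into nowhere dense pieces $L_n$ the block enumeration actually has closed initial segments. Concretely, the delicate point is verifying that for $\ell_\xi\in L_n\setminus L_{n-1}$, the trace on the union of the \emph{lower} blocks of the closure of a tail of $C_\xi$ is contained in that tail together with finitely much — this is where I would exploit that $L_{n-1}$ is nowhere dense and that the neighborhoods were defined from a metric base on $\mathbb R$. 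I would also need to double-check c.c.c.\ of $Q$ and that $Q$ preserves $\omega_1$ (automatic for c.c.c.), and that making $L$ meager is all that is used — so random forcing, or even a carefully chosen c.c.c.\ poset shooting a decomposition of $L$ into nowhere dense sets, suffices. The scattered/Hausdorff verification for $X$ is routine and parallels the constructions in Theorems~\ref{tm:lord1} and \ref{tm:beth}.
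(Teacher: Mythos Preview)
Your overall instinct---exploit that a Luzin set is non-meager in $V$ but can be made meager by a c.c.c.\ forcing---is the same as the paper's, but the paper's realization is dramatically simpler, and your proposal has both an unnecessary complication and a genuine error.

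The paper does not build anything on $L\times\omega$. It simply takes $X=\mathbb Q\cup Y$ where $Y$ is a Luzin set missing $\mathbb Q$ and meeting every interval in $\omega_1$ points, and refines the Euclidean topology by declaring every countable subset of $Y$ closed. Then Lemma~\ref{lm:baire_not_left_sep} applies directly with $A=\mathbb Q$, $B=Y$: a $\tau$-nowhere-dense subset of $Y$ must be countable (Euclidean-nowhere-dense intersected with a Luzin set is countable), so $\omega$ many of them cannot cover an open piece of $Y$, and $X$ is not left-separated. In the extension, once $Y$ is meager one writes $Y=\bigcup_n F_n$ with each $F_n$ closed nowhere dense (hence closed in $X$, and left-separated in type $\omega_1$ since its countable subsets are closed), interleaves with an enumeration $\{q_n\}$ of $\mathbb Q$, and gets a left-separating order of type $\omega_1\cdot\omega$. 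No column structure, no delicate neighborhood design---the ``main obstacle'' you anticipate simply does not arise.

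Your forcing choice is also wrong: random forcing \emph{preserves} Luzin sets (it is $\omega^\omega$-bounding, so every meager set in the extension is covered by a ground-model-coded meager set, hence a ground-model Luzin set stays Luzin). The Cohen alternative you mention parenthetically is correct---a single Cohen real makes the ground-model reals meager---and the paper itself passes to a c.c.c.\ extension in which MA holds (so that $|Y|=\omega_1<\mathfrak c$ forces $Y$ meager). Either of those works; random does not. Finally, your $X$ is never actually defined: you describe what the topology should \emph{do} but give no neighborhood base, and you explicitly flag the design as the main unresolved obstacle. That is a gap, and one the paper's construction avoids entirely.
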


\begin{proof}
Since there is a Luzin set $Z$, there is a Luzin $Y$ 
such that $|Z\cap V|=\oo$ for all non-empty euclidean open set $V$.   
 We can assume that $Y$
can not contain any rational numbers. 

The underlying set of our space is $X=\mbb Q\cup Y$, and the 
topology  $\tau$ on $X$ is the refinement of the euclidean topology 
by declaring the countable subsets of $Y$ to be closed. 
So the following family $\mc B$ is a base of $X$:
\begin{align*}
 \mc B=\{U\setm Z: U\text{ is a euclidean open set }, Z\in \br Y;{\omega};  \}.
\end{align*}

 Assume that $F\subs Y$ is  nowhere dense
in the topology $\tau$. 
Then there is  euclidean dense open set $U$ and a countable subset $Z$
of $Y$ such that $F\cap (U\setm Z)=\empt$.
So $F\cap U\subs Z$. Since $|F\setm U|\le {\omega}$ because $Y$ is Luzin,
we have $|F|\le {\omega}$. 

Since the countable subsets of $Y$ are closed and $\Delta(Y)=\oo$,
it follows that $F\subs Y$ is nowhere dense in $\tau$ iff $Y$ is countable.

So we can apply lemma \ref{lm:baire_not_left_sep} to show that  
 $X$ is not left-separated: 
let  $A=\mbb Q$ and $B=Y$. Since the union of countably many 
nowhere dense subset of $Y$ is nowhere dense, we have that $X$ 
is not left separated.

Consider a c.c.c generic extension $W$ of the ground model in which 
Martin's Axiom holds. Then in $W$ we have countable many dense open sets 
$\{U_n:n\in {\omega}\}$ such that 
\begin{align*}
\mbb Q\subs   \bigcap\nolimits_{n\in {\omega}}U_n\subs \mbb R\setm Y. 
\end{align*}
Let 
$$
F_n=(Y\setm \bigcap\nolimits_{i\in n}U_i)\setm \bigcup\nolimits_{m<n} F_m.
$$
Then $F_n$ is a closed subset of $X$ and it is left-separated in type $\oo$
because the countable subsets of $Y$ are all closed.
Write $\mbb Q=\{q_n:n\in {\omega}\}$.
Then we have a left-separating order of $X$ in type $\oo\cdot {\omega}$:
\begin{align*}
 F_0< q_0< F_1 < q_1 < F_1 < \dots
\end{align*}
where $F_n$ is ordered in type $\oo$.
\end{proof}

Instead of theorem \ref{tm:presccc}(2) we will prove theorem \ref{tm:smtsd},
which is a stronger statement.    
To formulate that theorem. we need some preparation.

First we recall a definition from the proof of \cite[Theorem 3.5]{JSSz}.
\begin{definition}
For each uncountable cardinal ${\kappa}$ define the poset 
$\pcalk=\<\Pko,\le\>$  as follows.

A quadruple $\anfg$ is in $\Pkoo$ provided
$(1)$--$(5)$ below hold:
\begin{enumerate}[(1)]
\item $A\in\br {\kappa};<{\omega};$,
\item $n\in {\omega}$,
\item $f$ and  $g$ are functions,
\item $f:A\times A\times n\to 2$,
\item $g:A\times n\times A\times n\to 3$,
\end{enumerate}

For $p\in \Pkoo$ we write $p=\anfgi p;$.
If $p, q\in \Pkoo$ we set
\begin{align*}
\text{$p\le q$ iff   $f^p\supseteq f^q$ and $g^p\supseteq g^q$. 
} 
\end{align*}
If $p\in \Pkoo$, ${\alpha}\in \ap$, $i<\np$ set
\begin{align*}
\text{$
U({\alpha},i)=\up({\alpha},i)=\{{\beta}\in \ap:\fp({\beta},{\alpha},i)=1\}
$.
} 
\end{align*}
A quadruple $\anfg\in \Pkoo$ is in $\Pko$ iff (i)--(iv) below are also 
satisfied:
\begin{enumerate}[(i)]
\item $\forall {\alpha}\in A$ $\forall i<n$ ${\alpha}\in U({\alpha},i)$,
\item $\forall {\alpha}\in A$ $\forall i<j<n$  
$U({\alpha},j)\subs U({\alpha},i)$,
\item $\forall \{{\alpha},{\beta}\}\in \br A;2;$ $\forall i,j<n$
\newline
\begin{tabular}{rcl}
$U({\alpha},i)\subs U({\beta},j)$&iff& $g({\alpha},i,{\beta},j)=0$,\\
$U({\alpha},i)\cap U({\beta},j)=\empt$&iff& $g({\alpha},i,{\beta},j)=1$.
\end{tabular}
\item $\forall \{{\alpha},{\beta}\}\in \br A;2;$ $\forall i,j<n$\newline
if ${\alpha}\in U({\beta},j)$ and ${\beta}\in U({\alpha},i)$
then $g({\alpha},i,{\beta},j)=2$.
\end{enumerate}
Finally let  $\pcalk=\<\Pko,\le\>$.

\end{definition}

Using this definition we can formulate our next result:

\begin{theorem}\label{tm:smtsd}
For each uncountable cardinal ${\kappa}$, 
the poset $V^{\pcalk}$ satisfies c.c.c and 
 in $V^{\pcalk}$ 
there is a   0-dimensional,
first countable topological space $X=\<{\kappa},{\tau}\>$ and
there is a c.c.c posets
$\qsd$   satisfying the following conditions: 
\begin{enumerate}[(a)]
\item $V^{{\pcalk}}\models$ ``{\em $R(X)=\omega$},
so $X$ does not contain uncountable left-separated subspaces.
\item $V^{\pcalk*\qsd}\models$ ``{\em $X$ is left separated in type $
\kappa\cdot \omega$}'', 
\end{enumerate}
\end{theorem}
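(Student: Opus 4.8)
The plan is to unwind the combinatorial definition of $\pcalk$ and show it does exactly what its design suggests: the generic filter produces, for each ${\alpha}<{\kappa}$ and $i<{\omega}$, a set $U({\alpha},i)\subs{\kappa}$, and the topology ${\tau}$ on $X={\kappa}$ is the one generated by declaring $\{U({\alpha},i):{\alpha}<{\kappa},\ i<{\omega}\}$ to be a clopen neighborhood base (clauses (i)--(ii) make $\{U({\alpha},i):i<{\omega}\}$ a decreasing neighborhood base at ${\alpha}$; clause (iii) forces the $U({\alpha},i)$ to form a base of \emph{clopen} sets since for any two basic sets one of ``$\subseteq$'', ``disjoint'', or ``neither but then complement information is recorded'' holds — more precisely $g$ encodes enough Boolean relations to see each $U({\alpha},i)$ is closed). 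First I would verify $\pcalk$ is c.c.c: given uncountably many conditions, apply a $\Delta$-system argument to the finite sets $A^p$ and pass to a subfamily on which the $n^p$, the isomorphism type of $f^p,g^p$ over the root, and the order type of $A^p$ all agree; then any two conditions in the subfamily can be amalgamated by taking the union of the $A$'s, keeping $n$, and filling in $f,g$ on the new cross pairs in a way consistent with (i)--(iv) — the key point is that for a \emph{new} pair $\{{\alpha},{\beta}\}$ with ${\alpha}\in A^p\setm A^q$, ${\beta}\in A^q\setm A^p$, we are free to set $g({\alpha},i,{\beta},j)=2$ and $f({\alpha},{\beta},i)=f({\beta},{\alpha},j)=1$ for all $i,j$, i.e.\ make the new basic sets all mutually ``crossing'', which trivially satisfies every constraint. (This is the standard Galvin-style argument; it is routine.)

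Next I would establish (a): $V^{\pcalk}\models R(X)={\omega}$, i.e.\ $X$ has no uncountable left-separated subspace, equivalently no uncountable discrete-in-itself... no, rather: recall $\lord{}$ relates to left-separation, and $R(X)$ is (presumably) defined so that $R(X)={\omega}$ means every subspace of size ${\omega}_1$ fails to be left-separated. The mechanism: a density argument shows that for any ${\kappa}$-many (in particular ${\omega}_1$-many) generic points ${\alpha}_{\xi}$, the generic arranges infinitely many of them to converge to each other in a symmetric way — concretely, given a name for an uncountable $Y\subs{\kappa}$ and a name for a well-ordering of it, one finds (by c.c.c.\ and a pressing-down / elementary-submodel argument in $V$) a condition forcing that some point ${\beta}$ of $Y$ lies in the closure of $\{{\alpha}\in Y:{\alpha}\prec{\beta}\}$, because clause (iv) with value $2$ — the ``crossing'' relation — is exactly what makes ${\beta}\in\overline{\{{\alpha}\}}^{\tau}$ possible: if ${\alpha}\in U({\beta},j)$ for all $j$ then ${\alpha}$ is in every neighborhood of ${\beta}$, hence ${\beta}\in\overline{\{{\alpha}\}}$. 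The genericity of $\pcalk$ lets us force, below any condition, that a chosen ${\beta}$ gets put into $U({\alpha},i)$ for cofinally (hence all, by (ii)) $i$, for suitable earlier ${\alpha}$'s; diagonalizing over names for well-orders yields $R(X)={\omega}$. I expect \textbf{this} to be the main obstacle: one must phrase the forcing-theoretic statement ``no uncountable subset is left-separated'' correctly and find the right density/genericity argument, handling the interaction between the name for $Y$ and the name for the separating well-order.

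Finally, for (b), I would describe the auxiliary poset $\qsd$ (a finite-conditions poset, in $V^{\pcalk}$, whose conditions approximate a partition of ${\kappa}$ into $\omega$ pieces $K_0,K_1,\dots$ together with finite closure requirements, or equivalently approximations to a left-separating well-order of $X$ in type ${\kappa}\cdot{\omega}$) and check it is c.c.c.\ by a $\Delta$-system argument as above — here c.c.c.\ should follow because the compatibility obstacle is again localized to finitely many pairs of points, and the clopenness of basic sets lets us separate. Then a standard genericity argument shows $\qsd$ forces $X$ to be partitioned as $\bigcup_{m<{\omega}}X_m$ with each $\bigcup_{m\le k}X_m$ closed and each $X_m$ left-separated in type ${\kappa}$, which by concatenation gives a left-separating well-order of type ${\kappa}\cdot{\omega}$; alternatively one invokes Theorem~\ref{tm:ordl-bounds}(a) with $T={\omega}$ under reverse order and $\rank(\mc T)={\omega}$ to get $\lord X\le{\kappa}\cdot{\omega}$, and the matching lower bound $\lord X\ge{\kappa}\cdot{\omega}$ is immediate since $|X|={\kappa}$ is uncountable and regular-ish... actually the lower bound needs that $X$ is not left-separated in any type $<{\kappa}\cdot{\omega}$, which would follow from an argument like (a) relativized, or simply from (a) itself if $R(X)={\omega}$ is upward absolute and forces $\lord X\ge{\kappa}\cdot{\omega}$ whenever $X$ becomes left-separated at all. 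I would close by noting that $\pcalk*\qsd$ is c.c.c.\ (iteration of c.c.c.\ by c.c.c.) so all cardinals are preserved and the statement makes sense. The last two paragraphs are where essentially all the work lies; the c.c.c.\ verifications and the topology-from-$g$ bookkeeping are routine, if tedious.
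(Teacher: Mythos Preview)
Your outline correctly identifies the architecture --- the $\pcalk$-generic builds the clopen base $\{V(\alpha,i)\}$, the fact $R(X)=\omega$ is quoted from \cite{JSSz} rather than reproved (so your worry that (a) is ``the main obstacle'' is misplaced for this paper), and a second finite-conditions poset $\qsd$ adds the left-separating data --- but your c.c.c.\ argument for $\qsd$ has a genuine gap.

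A condition in $\qsd$ is a triple $\langle B,d,e\rangle$ with $B\in[\kappa]^{<\omega}$, $d,e:B\to\omega$, subject to: whenever $\{\alpha,\beta\}\in[B]^2$ and $d(\alpha)\le d(\beta)$ then $\alpha\notin V(\beta,e(\beta))$. Two such conditions are compatible iff their union still satisfies this constraint, so for the \emph{cross} pairs you must actually have $\alpha\notin V(\beta,e(\beta))$. Inside $V^{\pcalk}$ the sets $V(\beta,j)$ are already determined by the generic; ``clopenness of basic sets lets us separate'' gives no leverage, because there is nothing left to choose. Your $\Delta$-system sketch therefore does not close. And your fallback ``iteration of c.c.c.\ by c.c.c.\ is c.c.c.'' is false in general, so even if you had shown $\qsd$ c.c.c.\ that would not finish.

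The paper instead proves c.c.c.\ of the iteration $\pcalk*\qsd$ directly in $V$. After thinning so that each $p_\nu$ decides $\dot q_\nu=\langle B^\nu,d^\nu,e^\nu\rangle$ with $B^\nu\subseteq A^{p_\nu}$, and so that any two $p_\nu,p_\mu$ are \emph{twins} (isomorphic over the $\Delta$-root, the twin map also respecting $d$ and $e$), one invokes the amalgamation lemma from \cite{JSSz}: for any prescribed $\varepsilon:A^{p_\mu}\setminus A^{p_\nu}\to 2$, the twins admit a common extension $p$ in which $f^p(\alpha,\sstr(\alpha),i)=\varepsilon(\sbar(\alpha))$ on every new cross pair. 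Taking $\varepsilon\equiv 0$ forces exactly $\alpha\notin V(\beta,e^\mu(\beta))$ for each relevant pair, so $p\Vdash q_\nu$ and $q_\mu$ compatible in $\qsd$. The key point you are missing is that the freedom needed to amalgamate $\qsd$-conditions lives in $\pcalk$, not in $\qsd$: for $\pcalk$ alone you amalgamated by setting cross $f$-values to $1$; here you need them to be $0$, and the paper isolates the \emph{strong} $\varepsilon$-amalgamation property precisely so that both options are available.
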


\begin{proof}[Proof of theorem \ref{tm:smtsd}]
It was proved in the proof of \cite[Theorem 3.5]{JSSz}
that the poset $V^{\pcalk}$ satisfies c.c.c.

Let ${\mc G}$ be the $\pcalk$ generic filter and
let $F=\bigcup\{f^p:p\in{\mc G}\}$.
For each ${\alpha}<{\kappa}$ and $n\in{\omega}$ let
$V({\alpha},i)=\{{\beta}<{\kappa}:F({\beta},{\alpha},i)=1\}$.
Put ${\mc B}_{\alpha}=\{V({\alpha},i):i<{\kappa}\}$ and
${\mc B}=\bigcup\{{\mc B}_{\alpha}:{\alpha}<{\kappa}\}$.
By standard density arguments we can see that
${\mc B}$ is  base of a first countable, 0-dimensional $T_2$  space 
$X=\<{\kappa},{\tau}\>$.

\cite[Theorem 3.5]{JSSz} claimed that $R(X)={\omega}$,
so $X$ can not contain uncountable left-separated subspaces.

To prove that $X$ is left separated in type 
${\kappa}\cdot {\omega}$ in some c.c.c generic extension $V^{\pcalk}*Q$
 define the poset $\qsd$ 
in $V^{\pcalk}$ as follows:

A triple $\bde$ is in  $\qsd$ iff
\begin{enumerate}[(a)]
\item $B\in\br{\kappa};<{\omega};$,
\item $d:B\to {\omega}$,
\item $e:B\to {\omega}$,
\item for each $ \{{\alpha},{\beta}\}\in \br B;2;$ if $d({\alpha})\le d({\beta})$
then ${\alpha}\notin V({\beta},e({\beta}))$.
\end{enumerate}

The orderings on $\qsd$  is defined in the straightforward 
way,
\begin{align*}
\bdei 0;\le \bdei 1;\text{ iff } d^0\supset d^1 \text{ and } 
e^0\supset e^1.
\end{align*}
If $q$ and $r$ are compatible elements of $Q$, then denote by
$q\land r$ their greatest lower bound in $Q$.
\begin{lemma}\label{lm:Qccc}
$\pcalk*\qsd$ satisfies c.c.c.
\end{lemma}
To prove this lemma
we need to recall two more 
definitions and a lemma from \cite{JSSz}. 
\begin{definition}[{\cite[Definition 3.6]{JSSz}}]
Assume that $p_i=\anfgi i;\in \Pkoo$ for $i\in 2$. We say that $p_0$ and $p_1$
are {\em twins} iff  $n^0=n^1$, $|A^0|=|A^1|$ and taking $n=n^0$ and
denoting by ${\sigma}$ the unique $<$-preserving bijection between $A^0$ and
$A^1$ we have
\begin{enumerate}[(i)]
\item ${\sigma}\restriction {A^0\cap A^1}=\id_{A^0\cap A^1}$.
\item ${\sigma}$ is an isomorphism between $p_0$ and $p_1$, i.e.
$\forall {\alpha},{\beta}\in A^0$, $\forall i,j<n$
\begin{enumerate}[({ii}-a)]
\item $f^0({\alpha},{\beta},i)=f^1({\sigma}({\alpha}),{\sigma}({\beta}),i)$,
\item $g^0({\alpha},i,{\beta},j)=
g^1({\sigma}({\alpha}),i,{\sigma}({\beta}),j)$,
\end{enumerate}
\end{enumerate}
We say that ${\sigma}$ is the {\em twin function} of $p_0$ and $p_1$.
Define the {\em smashing function} $\sbar$ of $p_0$ and $p_1$ as follows:
 $\sbar={\sigma}\cup \id_{A_1}$.  
The function $\sstr$ defined by the formula 
$\sstr={\sigma}\cup {\sigma}^{-1}\restriction {A_1}$ is called the
{\em exchange function} of $p_0$ and $p_1$.
\end{definition}

\begin{definition}
\label{def:eps_amalg}
Assume that $p_0$ and $p_1$ are twins and 
${\varepsilon}:A^{p_1}\setm A^{p_0}\to 2$. A common extension 
$q\in\Pko$ of $p_0$ and $p_1$  is called an
{\em ${\varepsilon}$-amalgamation} of the twins provided
\begin{equation}
\forall {\alpha}\in A^{p_0}\triangle A^{p_1}\ 
f^q({\alpha},\sstr({\alpha}),i)={\varepsilon}(\sbar({\alpha})).
\end{equation}
The notion of an {${\varepsilon}$-amalgamation} was introduced in 
{\cite[Definition 3.7]{JSSz}}.

An {${\varepsilon}$-amalgamation} is a {\em strong ${\varepsilon}$-amalgamation}
if  
\begin{multline}\tag{$*$}\label{strong}
\forall \{{\alpha},\beta\}\in \br A^{p_0}\cup  A^{p_1};2;\ \forall i<n^{p_0}  \\
\text{ if } \sstr({\alpha})\ne \sstr({\beta})\text{ then }
f^q({\alpha},{\beta},i)=f^{\mu}(\sstr({\alpha}),\sstr({\beta}),i).
\end{multline}
\end{definition}

\begin{lemma}
\label{lm:twins}
If $p_0$,  $p_1\in\pcalk$ are twins and 
${\varepsilon}:A^{p_1}\setm A^{p_0}\to 2$, then $p_0$
and $p_1$ have a strong  ${\varepsilon}$-amalgamation in $\Pko$
such that .
\end{lemma}

\begin{proof}
In \cite[Lemma 3.8]{JSSz} we proved that $p_0$ and $p_1$
have an ${\varepsilon}$-amalgamation $q$  in $\Pko$.
However, the condition $q$, which was defined in the first paragraph 
of the proof of \cite[Lemma 3.8]{JSSz} is actually
a strong ${\varepsilon}$-amalgamation.
\end{proof}

Now we are ready to prove our lemma.

\begin{proof}[Proof of Lemma \ref{lm:Qccc}]
Let $\<\<p_{\nu},q_{\nu}\>:{\nu}<\oo\>\subs \pcalk*\qsd$.
We can assume that $p_{\nu}$ decides $q_{\nu}$. Write
$p_{\nu}=\anfgi {\nu};$ and $q_{\nu}=\bdei {\nu};$.
By standard density arguments we can assume that
$A^{\nu}\supset B^{\nu}$. Applying standard $\Delta$-system and counting
arguments  we can find 
$\{{\nu},{\mu}\}\in \br \oo;2;$ such that $p_{\nu}$ and $p_{\mu}$ are twins 
and denoting by ${\sigma}$ the twin function of $p_{\nu}$ and
$p_{\mu}$ we have 
\begin{enumerate}[(i)]\addtocounter{enumi}{2}
 \item $B^{\mu}=\sigma''B^{\nu}$,
\item 

 $d^{\nu}({\alpha})=d^{\mu}({\sigma}({\alpha}))$ for each ${\alpha}\in B^{\nu}$,
and 
\item $e^{\nu}({\alpha})=e^{\mu}({\sigma}({\alpha}))$
for each ${\alpha}\in B^{\nu}$.
\end{enumerate}

Define the function ${\varepsilon}^0:A^{\mu}\setm A^{\nu}\to 2$ by the
equation ${\varepsilon}^0({\alpha})=0$. By Lemma \ref{lm:twins}
the conditions $p^{\nu}$ and $p^{\mu}$ have an 
${\varepsilon}^0$-amalgamation $p$.
We claim that 
$$
p\Vdash\text{$q_{\nu}$ and  $q_{\mu}$ are compatible in  $\qsd$,}
$$
i.e
$$
p\Vdash \<B^{\nu}\cup B^{\mu}, d^{\nu}\cup d^{\mu}, e^{\mu}\cup e^{\mu}\>
\text{has properties (a)--(d)}.
$$
(a) clearly holds. Since $\sigma({\alpha})={\alpha}$ for each 
${\alpha}\in B^{\nu}\cap B^{\mu}\subs A^{\nu}\cap A^{\mu}$,
assumption (iv) implies that $d^{\nu}\cup d^{\mu}$ is a function, and 
assumption (v) implies that $e^{\nu}\cup e^{\mu}$ is a function, 
and so (b) and (d) also hold.

To check (d) it is enough  to show that 
if ${\alpha}\in B^{\nu}$,
${\beta}\in B^{\mu}$, $d^{\nu}({\alpha})\le d^{\mu}({\beta})$, then
$p\Vdash   {\alpha}\notin V({\beta},e^{\mu}({\beta}))$, i.e. 
\begin{displaymath}
\tag{$\star$}\label{star}
f^p({\alpha},\beta, e^{\mu}({\beta}))=0. 
\end{displaymath}
Assume first that  $\sigma({\alpha})=\sstr({\alpha})\ne {\beta}$.  
Then $$d=d^{\mu}(\sigma({\alpha}))=d^{\nu}({\alpha})$$ by (iv),
$$e=e^{\mu}(\sigma({\alpha}))=e^{\nu}({\alpha})$$  by  (v),
and so 
$d^{\mu}(\sigma({\alpha}))\le d^{\mu}({\beta})$. Thus,  by (d),
\begin{displaymath}
    p_{\mu}\Vdash \sigma({\alpha})\notin V({\beta},e^{\mu}({\beta})),
\end{displaymath} 
i.e.
\begin{displaymath}
 f^{\mu}(\sigma({\alpha}), {\beta},e^{\mu}({\beta}))=0.
\end{displaymath}
Since $p$ is a strong $\varepsilon^0$-amalgamation, condition \eqref{strong}
implies that 
\begin{displaymath}
 f^p({\alpha}, {\beta},e^{\mu}({\beta}))=f^{\mu}(\sigma({\alpha}), {\beta},e^{\mu}({\beta}))=0,
\end{displaymath}
i.e. \eqref{star} holds.

If $\sstr({\alpha})={\beta}$, then $p\Vdash   {\alpha}\notin V({\beta},e^{\mu}({\beta}))$
because $p$ is an ${\varepsilon}^0$-amalgamation, and so 
\begin{displaymath}
f^p({\alpha},\sstr({\alpha}), e^{\mu}({\beta}))=\varepsilon(\sbar({\alpha}))=0.
\end{displaymath}
\end{proof}

\begin{lemma}
$$
V^{\pcalk*\qsd}\models\mbox{``$X$ is left-separated in type $\kappa\cdot \omega$''}
$$
\end{lemma}

\begin{proof}
Let ${\mc H}$ be the $\qsd$-generic filter over $V^{\pcalk}$.
Set $d=\bigcup\{d^q:q\in {\mc H}\}$ and
$e=\bigcup\{e^q:q\in {\mc H}\}$. By standard density arguments 
the domains of the functions  $d$ and $e$ are ${\kappa}$.
We have 
\begin{align}
 V(x,e(x))\cap \big(\bigcup\nolimits_{i\le d(x)}d^{-1}\{i\}\big )=\{x\},
\end{align}
 by (d), so
 $d^{-1}(n)$ is discrete 
and $\overline{d^{-1}\{n\}}\cap d^{-1}n=\empt$
for each $n\in {\omega}$.
So we can construct  a left-separating ordering of $X$ in type 
${\kappa}\cdot {\omega}$ as follows:  
consider a well-ordering $\prec$ of $X$ such that 
  $d^{-1}\{n\}$  is well-ordered in type ${\kappa}$ for $n\in {\omega}$, and 
\begin{align*}
d^{-1}\{0\}\prec d^{-1}\{1\}\prec \dots <d^{-1}\{n\}\prec d^{-1}\{n+1\}\prec \dots. 
\end{align*}
\end{proof}
This completes the proof of theorem \ref{tm:smtsd}.
\end{proof}

\end{document}